\numberwithin{equation}{section}
\newtheorem{theorem}{Theorem}[section]
\newtheorem{definition}{Definition}[section]
\newtheorem{notation}{Notation}
\newtheorem{proposition}{Proposition}[section]
\newtheorem{lemma}{Lemma}[section]
\newtheorem*{lemma*}{Lemma}
\newtheorem{assumption}{Assumption}
\begin{document}

\nocite{*} 

\title{A Mixed Type Generalized Kimura Operator}

\author{Binglu Chen \thanks{Department of Mathematics, University of Chicago, Chicago, IL 60637; {\tt blchen@uchicago.edu}}}

\date{} 

\maketitle

\begin{abstract}
We analyze a class of mixed type generalized Kimura operators on 2-dimensional compact manifolds with corners that find applications in the analysis of topological insulators. We model the operator and  
provide the degenerate H\"older space-type estimates for model operators. With the analysis of perturbation term we establish the existence of solutions.
We also give proofs of the existence and regularity of the global heat kernel.
\end{abstract}

\tableofcontents




\section{Introduction}
This paper is dedicated to a study of a mixed type generalized Kimura diffusion operator. The problem stems from the study of a diffusion operator in \cite{GB} on the triangle $T=\{(x,y):0\leq x,y,x+y\leq 1\}$:
\begin{gather*}
    L=\gamma_{12}\left[xy(\partial_x-\partial_y)^2+(y-x)(\partial_x-\partial_y)\right]\\
    +\gamma_{23}\left[y(x\partial_x+(y-1)\partial_y)^2+(y-1)(x\partial_x+(y-1)\partial_y)\right]\\
    +\gamma_{13}\left[x((x-1)\partial_x+y\partial_y)^2+(x-1)((x-1)\partial_x+y\partial_y)\right].
\end{gather*}

Each of the above three terms is degenerate as a one-dimensional diffusion and they combine to a degenerate operator $L$, which is elliptic inside $T$ but  remains degenerate at the boundary of $T$. In the vicinity of the two boundary segments defined by $x = 0$ and $y = 0$, some diffusion coefficients converge linearly to $0$ while the drift terms converge to a vector field pointing toward the inside of $T$. In the vicinity of the boundary segment $x+y = 1$, the drift term vanishes linearly while the normal components of the diffusion coefficient vanish quadratically.

We generalize this example to a class of operators that we call {\em a mixed type generalized Kimura diffusion operator},
acting on functions defined on 2-dimensional manifolds of corners. 
A mixed type generalized Kimura diffusion operator, $L$ defined on $P$, a two-dimensional compact manifold with corners, is a second-order locally elliptic operator in the interior $\mathring{P}$ with appropriate degeneracy conditions at boundary points. Specifically, in the vicinity of a boundary component, the coefficients of the normal part of the second-order term vanish to order either one or two. 

When the coefficients of the normal part of the second order term vanish exactly to order one along all the boundary components, $L$ is the $\mathit{generalized\ Kimura \ operators}$ as introduced by C. Epstein and R. Mazzeo in \cite{EM}. It is an extension of the Wright-Fisher operator \cite{wright, fisher_1931, 10.1093/genetics/16.2.97}. 

In our setting of mixed degeneracy, as in \cite{EM}, the natural generalized domains of the mixed type generalized Kimura diffusion operator $L$ are 2-
dimensional compact manifolds with corner $P$. A point $p\in bP$, the boundary of $P$, has a relative open neighborhood in $P$, that is homeomorphic to a open neighborhood of $(0,0)$ in $\mathbb{R}_+\times\mathbb{R}$ or $\mathbb{R}^2$, with $p$ mapping to $(0,0)$. We call $p$ an edge point or a corner in these two cases, respectively. Under local adapted coordinates, $L$ is presented in the form given in Definition \ref{definition}.

Let $E$ be a regular edge of $P$, i.e., so that all the points on $E$ are of type \ref{2a}. We say $L$ is tangent to $E$ if at any point $p$ on $E$, the vector field perpendicular to the edge vanishes at $p$, i.e., $d(p)=0$, and $L$ is transverse to $E$ if there exists a $c_E>0$ such that $d(p)>c_E$.

The results in our paper are derived under the following assumption:
\begin{assumption}
 $L$ is either tangent or transverse to any regular edge. 
\end{assumption}

As in \cite{EM}, the second order part of a generalized Kimura operator induces a singular metric on $\mathbb{R}_+^n\times\mathbb{R}^m$:
\begin{gather*}
    ds^2_{\text{WF}}=\sum_{j=1}^n\frac{dx_j^2}{x_j}+\sum_{k=1}^mdy_k^2,
\end{gather*}
\cite{EM} then introduces two families of H\"older weighted spaces induced by this metric. In section \ref{holder weighted space}, we define the metric induced by the second order part of $L$ and two families of H\"older spaces $C^{k,\gamma}(P), C^{k,2+\gamma}(P)$ for $k\in\mathbb{N}_0, 0<\gamma<1$. For each local case of $L$, we solve the heat equation of the corresponding model operator in the model H\"older space. By patching local solutions together and analyzing the corresponding perturbation term, we obtain the first main result of this paper:
\begin{theorem}
For $0<\gamma<1$, if the data $f\in C^{k,2+\gamma}(P),g\in C^{k,\gamma}(P\times[0,T])$, then the inhomogeneous problem 
\begin{gather*}
   (\partial_{t}-L)w=g\ in\ P\times[0,T]\qquad 
   \text{with}\quad  w(0,x,y)=f
\end{gather*}
has a unique solution $w\in C^{k,2+\gamma}(P\times[0,T])$.
\end{theorem}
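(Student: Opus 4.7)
My approach would follow the standard strategy of Epstein--Mazzeo: reduce $L$ to model operators in local coordinates, obtain sharp H\"older estimates for each model, patch them via a partition of unity, and absorb the resulting perturbation by a Neumann series. The uniqueness part is separated from existence and handled at the end.

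First I would cover $P$ by a finite collection of coordinate charts $\{U_\alpha\}$ on which $L$ is a controlled perturbation of one of finitely many model operators $L_\alpha^{(0)}$: the ordinary heat operator in the interior; a one-dimensional Wright--Fisher model $x\partial_x^2 + b\,\partial_x$ at a type~\ref{2a} edge point; the quadratic-normal model at a regular edge of the other type; and tensor-product models at corner points, enumerated according to the cases permitted by the Assumption (so that the quadratic-normal and Wright--Fisher directions are always separated in adapted coordinates). For each $L_\alpha^{(0)}$ the preliminary task is to solve the model Cauchy problem on the corresponding half-space, quadrant or slab and to obtain a Schauder-type estimate
\begin{equation*}
\|w\|_{C^{k,2+\gamma}}\le C\bigl(\|f\|_{C^{k,2+\gamma}} + \|g\|_{C^{k,\gamma}}\bigr).
\end{equation*}
For the Wright--Fisher pieces these are essentially \cite{EM}; for the quadratic-normal model and the mixed corner models one derives them from an explicit heat kernel constructed in the adapted singular metric, together with parabolic scaling arguments in the geometry natural to each model.

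Next I would fix a partition of unity $\{\varphi_\alpha\}$ subordinate to $\{U_\alpha\}$ and cut-offs $\widetilde\varphi_\alpha\equiv 1$ on $\mathrm{supp}\,\varphi_\alpha$, and define an approximate solution operator
\begin{equation*}
Q(f,g)=\sum_\alpha \widetilde\varphi_\alpha\, w_\alpha,
\end{equation*}
where $w_\alpha$ solves the model problem for $L_\alpha^{(0)}$ on $U_\alpha$ with data $(\varphi_\alpha f,\varphi_\alpha g)$. A direct computation gives
\begin{equation*}
(\partial_t - L)\,Q(f,g) = g + E(f,g),
\end{equation*}
where $E$ collects (i) the commutators $[L,\widetilde\varphi_\alpha]w_\alpha$, supported away from the degenerate locus of each chart and therefore bounded by lower-order norms, and (ii) the mismatch $(L-L_\alpha^{(0)})\widetilde\varphi_\alpha w_\alpha$, which is small because the coefficients of $L-L_\alpha^{(0)}$ vanish at the base point to the order prescribed by the model.

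The decisive step, and in my view the principal obstacle, is to show that $E\colon C^{k,2+\gamma}\times C^{k,\gamma}\to C^{k,\gamma}$ is bounded with operator norm strictly less than $1$ once the charts are chosen fine enough, possibly after shrinking $T$ and iterating to cover $[0,T]$. In the mixed-degeneracy setting this is delicate because the parabolic scaling at a quadratic-normal boundary is different from the Wright--Fisher scaling, so the commutator and mismatch terms must be measured in the anisotropic weighted H\"older norm appropriate to each edge and corner type and then be shown to match across transitions between chart types; this is where the tangent/transverse dichotomy in the Assumption is essential. Once this perturbation bound is secured, $I+E$ is invertible on $C^{k,\gamma}(P\times[0,T])$ by Neumann series and
\begin{equation*}
w=Q\bigl((I+E)^{-1}(f,g)\bigr)\in C^{k,2+\gamma}(P\times[0,T])
\end{equation*}
solves the inhomogeneous problem. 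Uniqueness is obtained by applying a weak maximum principle to the difference of two solutions, using that $L$ admits no inward-pointing characteristic behaviour beyond that permitted by the tangent/transverse assumption, so that vanishing data forces $w\equiv 0$.
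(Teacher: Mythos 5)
Your proof plan is structurally the same as the paper's: cover $P$ by local adapted coordinate charts, solve the five model problems exactly (Propositions~\ref{cauchy} and~\ref{inhomogeneous}), glue via a partition of unity subordinate to an $\epsilon$-grid plus an interior chart, show the error $E^t$ has norm $<1$ after choosing $\epsilon$ and then $T_0$ small, invert $Id+E^t$ by Neumann series, and run a parabolic maximum principle for uniqueness. Your identification of the "decisive step" (making $E^t$ a strict contraction in the anisotropic weighted norms, across the different scalings at Wright--Fisher and quadratic boundaries) matches Section~\ref{6.1}--4.3 of the paper, and your remark about shrinking $T$ is exactly how the tangential first-derivative error $E^{0,t}$ is handled using the bound $\|D_y u\|_{\gamma,T}\lesssim T^{\gamma/2}\|u\|_{\gamma,T}$ from Proposition~\ref{inhomogeneous}.

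There is one genuine gap: you dispose of uniqueness with "apply a weak maximum principle\,\dots\,using that $L$ admits no inward-pointing characteristic behaviour beyond that permitted by the tangent/transverse assumption." The tangent/transverse dichotomy applies only to \emph{regular} edges; it says nothing about the infinity edge, and that is exactly where the maximum principle is nontrivial. In the paper's Proposition~\ref{max prin}, regularity of $w\in C^{k,2+\gamma}$ rules out interior maxima along regular edges, but at a point of $E_\infty$ the operator \eqref{2b} is, after the substitution $z=-\ln x$, a uniformly elliptic operator on the unbounded strip $[0,\infty)\times[0,1]$: the maximum principle there is a Phragm\'en--Lindel\"of statement, not a straightforward weak maximum principle. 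The paper must invoke boundedness of $u$ (hence a growth bound $v\le Ae^{a(x^2+z^2)}$), build a barrier $U_\tau$ from the fundamental solution of \cite{FG} with Gaussian growth $e^{G(x^2+z^2)/4(\tau-t)}$, compare on the truncated cylinder $D_R$, and send $R\to\infty$, $\epsilon_1,\epsilon_2\to 0$. Without some version of this argument the uniqueness claim is unsupported at the quadratic boundary, so you should either reproduce a barrier argument there or explicitly cite one; "no inward-pointing characteristic behaviour" is not a substitute. A secondary, smaller omission: for $k\ge 1$ the convergence of the Neumann series in the $C^{k,\gamma}$ operator norm does not follow from the $k=0$ contraction alone; the paper uses the Banach-ladder criterion (\cite{EM}, Theorem 11.8.1, restated as Lemma~\ref{banach}) with a split estimate $\|E^t g\|_{k,\gamma}\le \alpha_k\|g\|_{k,\gamma}+\beta_k\|g\|_{k-1,\gamma}$, $\alpha_k<1$, and your outline should acknowledge that extra ingredient.
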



Based on series expansion of fundamental solutions of model operators and the above result on the heat equation, our second main result is the existence and regularity results of a heat kernel for $L$. We let $P^{reg}$ denote the union of $\mathring{P}$ and regular edge points and regular corner points (see Definition \ref{definition}).
\begin{theorem}
The global heat kernel $H_{t}(d_{1},d_{2},l_{1},l_{2})\in C^{\infty}(P^{reg}\times \mathring{P}\times(0,\infty))$ of the full operator $L$ exists and for $f\in C^0(P)$, then 
\begin{align*}
   v_{f}:=\int_{P}H_{t}(d_{1},d_{2},l_{1},l_{2})f(l_{1},l_{2})dl_{1}dl_{2} 
\end{align*}
is the solution of $(\partial_{t}-L)v_f=0$ with $v_{f}(0,\cdot,\cdot)=f$. 
\end{theorem}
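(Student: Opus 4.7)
The plan is to construct the heat kernel by a parametrix method, leveraging the explicit fundamental solutions of the model operators together with the solvability result of Theorem 1. First, at each point $p \in P$ one selects adapted local coordinates in which $L$ takes a model form plus a lower-order perturbation; the list of model operators corresponds to the stratification of $P$ into interior, regular/tangent/transverse edge, and regular/singular corner points. The heat kernels of the model operators can be written explicitly via separation of variables and series expansions in special functions (Laguerre-type for the linearly degenerate Kimura factor, Hermite-type for transverse directions, and a dedicated expansion for the quadratically degenerate normal direction). These model kernels are smooth on $P^{reg} \times \mathring P \times (0, \infty)$ in local coordinates, form an approximate identity as $t \to 0^+$, and satisfy off-diagonal decay in the induced metric.

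Second, I would patch the local model kernels together with a partition of unity subordinate to a finite coordinate cover of $P$, obtaining an approximate kernel $H^0_t(z, w)$. Applying $\partial_t - L$ to $H^0_t$ yields an error $E_t(z, w)$ supported in overlap regions, with the property that for each fixed interior source $w \in \mathring P$ the map $(t, z) \mapsto E_t(z, w)$ lies in $C^{k, \gamma}(P \times [0, T])$: this is because each model kernel exactly solves its model heat equation, so the error comes only from the cutoffs and from the lower-order difference $L - L_{\mathrm{model}}$, which is one order better in the adapted weights. Theorem 1 then produces, for each $w$, a correction $K_t(\cdot, w) \in C^{k, 2+\gamma}(P \times [0, T])$ with $(\partial_t - L) K = -E$ and $K_0 = 0$, and setting $H_t := H^0_t + K_t$ gives a true heat kernel.

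For $C^\infty$ regularity on $P^{reg} \times \mathring P \times (0, \infty)$, I would combine interior hypoellipticity of $\partial_t - L$ in the source variable $w$ (where $L$ is uniformly elliptic on compact subsets) with iterated application of Theorem 1 to equations obtained by differentiating $H_t$ in $z$ and using the semigroup identity $H_{t+s} = H_t \circ H_s$ to trade time regularity for spatial regularity, and to transfer smoothness between the two spatial slots. That $v_f(t, z) = \int_P H_t(z, w) f(w)\, dw$ solves $(\partial_t - L) v_f = 0$ with $v_f(0, \cdot) = f$ then follows by differentiating under the integral sign and verifying the approximate identity property at $t = 0$; the latter is inherited from the model kernels and survives the perturbation because $K_0 = 0$.

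The main obstacle will be uniform control of the patching error $E_t$ in the weighted Hölder spaces across regions where the type of degeneracy changes, for instance where a tangent regular edge meets a transverse regular edge, or near a corner where linearly degenerate factors combine with a quadratically degenerate one. The weights in $C^{k, \gamma}(P)$ are finely tailored to each local model, so one must verify that cutting off one model kernel and gluing it to another actually produces an error lying in the Hölder class required by Theorem 1. A related difficulty is the genuinely two-sided nature of the smoothness claim: $z$ is allowed to reach $P^{reg}$ while $w$ stays in $\mathring P$, and the bootstrap must confirm that applying Theorem 1 (which a priori gives $z$-regularity) combined with interior hypoellipticity in $w$ actually yields joint $C^\infty$ regularity up to the regular boundary in the first variable.
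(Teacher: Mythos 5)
Your overall strategy — construct an approximate kernel from model fundamental solutions, compute the error, and correct it with Theorem~4.1 — is the right outline, but there is a genuine gap in the patching step that the paper resolves with an extra layer of construction.

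You propose to patch the \emph{constant-coefficient model} heat kernels $K^M_t$ directly and claim the resulting error $E_t(z,w)=(\partial_t-L)H^0_t(z,w)$ lies in $C^{k,\gamma}(P\times[0,T])$ for each fixed $w\in\mathring P$. This is not correct. Besides the commutator term $[\phi,L]K^M_t$ (which indeed lives off the diagonal), the error contains $\phi\,(L-L_M)K^M_t$, and this piece \emph{is} supported near the diagonal $z\approx w$. Since $L-L_M$ has second--order coefficients that only vanish at the chart center (or, in the better Levi version, at the source $w$), and second spatial derivatives of a $2$--dimensional kernel scale like $t^{-2}$ near the diagonal, this contribution blows up like $t^{-3/2}$ or worse as $t\to 0$. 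That is nowhere near the Hölder class needed by Theorem~4.1, so a single parametrix correction does not close. The paper's fix is two--tiered: first it constructs an \emph{exact local} heat kernel $q_t(\cdot,q)$ for a modified operator $\widetilde L = L_{q,M} + h(L-L_{q,M})$ that agrees with $L$ on the support of the local cutoff, via a Levi/Volterra iteration
\[
(\partial_t-\widetilde L)\Bigl(q_t - K_t - \sum_{i=1}^{N-1}\mathbf{K}_t\mathbf{B}^i\delta\Bigr)=\mathbf{B}^N\delta,
\]
with Proposition~5.1 showing that each application of $\mathbf{B}$ gains a factor $\sqrt t$ so that after five iterations the remainder $\mathbf{B}^5\delta$ is in $C^{0,\gamma}$ and Theorem~4.1 can absorb it. Only \emph{then} are the exact local kernels patched; and because $\widetilde L=L$ on $\operatorname{supp}\phi$, the coefficient mismatch term disappears entirely, leaving only $[\phi,L]q_t\chi$, which is supported a fixed distance off the diagonal and decays like $e^{-c/t}$ (Proposition~5.3). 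That exponential smallness, not mere Hölder regularity, is what makes the final correction step clean.

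Two further points where your route diverges from, or undersells, what is actually required. First, your smoothness argument (semigroup identity plus interior hypoellipticity plus iterated application of Theorem~4.1) is not what the paper does for the mixed-type case; the paper compares $L$ with a globally Kimura operator $\widetilde L$ that coincides with $L$ near the source, writes $(\partial_t-L)(H_t-\widetilde H_t)(\cdot,q)=(L-\widetilde L)\widetilde H_t(\cdot,q)$ with a right-hand side supported away from $q$ and exponentially small in $t$, and bootstraps from the already-known smoothness of the pure-Kimura heat kernel. Your bootstrap might be salvageable, but you would need to justify that Theorem~4.1 applies to data that are merely $C^0$ after one semigroup step, and the paper's Theorem~4.1 is stated for $C^{k,2+\gamma}$ initial data, so the first step of the bootstrap is itself nontrivial. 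Second, the approximate-identity behavior as $t\to 0$ is more delicate than you indicate: the paper establishes that $\lim_{t\to 0}q_t(p,\cdot)$ equals $\delta_p$ only at transverse boundary points and equals $0$ at tangent points (and degenerates to lower-dimensional deltas along the infinity strata). For the statement at hand this matters less because $v_f$ only involves the source variable over $\mathring P$, but you should state explicitly that the approximate identity is taken in the interior source variable, where the model kernels are genuinely approximate identities, and not claim it holds uniformly across $P^{reg}\times P$.
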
 
This heat kernel is smooth in $(d_1,d_2)$ when $(d_1, d_2)\in P^{reg}$. In other words, this includes source contributions at $(d_1,d_2)$ any point on the regular part of $bP$. When $(d_1,d_2)$ is on the infinity edge, we may prove that $H_t(d_1,d_2,l_1,l_2)$ is the product of a delta function on the infinity edge and a one-dimensional heat kernel along that edge; we do not present the details here. The diffusion coefficients vanishing to second order in the normal direction essentially imply that the infinity edge as the name indicates is indeed at infinity in the following sense: any diffusion starting from $\mathring{P}$ would never reach the infinity edge, while a diffusion starting from infinity edge would never enter $\mathring{P}$.

\paragraph{Comparison with previous research.}
There is a rich literature addressing the fundamental solution of the Kimura operator. 
In \cite{shimakura}, Shimakura obtained a closed form of transition probability through computing the the eigenvalues and eigenfunctions of the Wright-Fisher operator. In \cite{wright}, L. Chen and D. Stroock study the Dirichlet heat kernel and its boundary behavior for the one-dimensional Wright-Fisher operator,
\[Lu=x(1-x)u_{xx}, \forall x\in (0,1)\]
through series expansion of the fundamental solutions of the related model operators.

In \cite{epstein2019transition}, C. Epstein and C. Pop studied the martingale problem and semigroup on the weighted $L^2$-space $L^2(P,d\mu)$ associated to the operator, and thus established the transition probabilities and hitting distributions of the diffusion operator. In \cite{harnack}, C. Epstein and R. Mazzeo obtained the Dirichlet heat kernel of the generalized Kimura operator consisting of a single term of the form $k(t,p,q)d\mu_P (q)$, where it is assumed that the weights are strictly positive, i.e. $L$ is transverse to all boundary hypersurfaces. They proved that $k\in C^\infty((0,\infty)\times P \times(P\setminus bP))$ and is uniformly bounded on $P\times P$ for positive times. The main ingredients of their proofs are a scale invariant Poincar\'e inequality established for a large class of weighted Dirichlet forms.

\paragraph{Outline of this paper.}
The plan of this paper is as follows. In section \ref{L} we introduce the mixed type generalized Kimura operator $L$ on 2-dimensional compact manifolds with corner $P$ and the degenerate H\"older space associated with $L$. The operator $L$ is modeled at different boundary points by the model operator $L_{M}$ acting on the model spaces. In section \ref{model_operator} we derive the explicit fundamental solutions of model operators and make a careful analysis of the solution operator in the degenerate H\"older spaces. After this, in section \ref{6} we prove the existence of solutions of the equation \[(\partial_t-L)u=g\ \text{in}\  P\times (0,T],\ u(0, p)=f(p)\]
with data $f,g$ in the degenerate H\"older spaces. In section \ref{7} we establish the existence and regularity results of the heat kernel of $L$.

\section{$L$ on 2-dimensional manifolds with corners}\label{L}
\subsection{Setting}
The natural generalized domains of the mixed type generalized Kimura diffusion operator are 2-dimensional manifolds with corners. Compared to manifolds with or without boundary, which are already universally known in Differential Geometry, manifolds with corners have received comparatively little attention. They were developed by \cite{JC, AD, RB93, RB96} and others in the analysis of elliptic operators on manifolds with corners. A more complete introduction can be found in \cite{DJ}. Although the basic definitions in the subject are not agreed upon, we cite the most commonly used definitions and give a review here.

A paracompact Hausdorff topological space $P$ is a 2-dimensional manifold with corners if for every $p\in P$, there is a neighborhood $\mathcal{U}_{p}$ and a homeomorphism $\psi_{p}$ from $\mathcal{U}_{p}$ to a neighborhood of 0 in $\mathbb{R_{+}^{\mathit{l}}\times\mathbb{R^{\mathit{\mathrm{2}-l}}}}$ for some $l\in\left\{ 0,1,2\right\}$, with $\psi_{p}(p)=0$ and the overlap maps are diffeomorphisms. (Recall that a mapping
between two relatively open sets in $\mathbb{R}^{n}\times \mathbb{R}^{N-n}$ is a diffeomorphism if it is
the restriction of a diffeomorphism between two absolute open sets in $\mathbb{R}^{N}$.) Specifically, if $\mathbf{\psi_{\mathit{p}}:}\ \mathcal{U}_{p}\to\mathcal{V}_{p}$  is the homeomorphism, then for $p\neq q$:
\begin{align*}
  \psi_{p}\circ\psi_{q}^{-1}:\psi_{q}\left(\mathcal{U}_{q}\cap\mathcal{U}_{p}\right)\longrightarrow\psi_{p}\left(\mathcal{U}_{q}\cap\mathcal{U}_{p}\right)
\end{align*}
is a diffeomorphism. If such a map $\psi_{p}$ exists, we say that the point $p$ is an interior point if $l=0$, an edge point if $l=1$, a corner if $l=2$. The codimension $l$ is well defined after imposing smoothness structures. It is due to the fact that the wedges $\left\{(r,\theta):r\geq 0, 0\leq\theta\leq A \right\}$ with various angles (acute angle, $\pi$, obtuse angle, $2\pi$) are different diffeomorphism classes. The definition of manifold with corners excludes the wedge with obtuse angles, hence a non-convex polyhedron appears as the simplest counterexample.

\begin{definition}\label{definition}
Let $P$ be a two-dimensional compact manifold with corners. A second order operator $L$ defined on $P$ is called a generalized Kimura diffusion operator of second kind if it satisfies the following set of conditions:
\begin{description}
\item[1.]$L$ is elliptic in the interior of $P$.
\item[2.]If $q$ is an edge point, then there are local coordinates $(x,y)$ so that in the neighborhood 
\begin{align*}
   \mathcal{U}=\left\{ 0\leq x<1,\  |y|<1\right\} 
\end{align*}
the operator takes one of the following two forms:
\begin{align}
      \label{2a}
      &L=ax\partial_{x}^{2}+bx\partial_{xy}+c\partial_{y}^{2}+d\partial_{x}+e\partial_{y}\tag{2a}\\
     \label{2b}
      &L=ax^{2}\partial_{x}^{2}+bx\partial_{xy}+c\partial_{y}^{2}+dx\partial_{x}+e\partial_{y}.\tag{2b}
\end{align}
We assume that all coefficients $a(x,y), b(x,y),  c(x,y), d(x,y), e(x,y)$ lie in $C^{\infty}(\mathcal{U})$. We call $q$ a regular edge point, infinity edge point, respectively.
\item[3.] If $q$ is a corner, then there are local coordinates $(x,y)$ so that in the neighborhood 
\begin{align*}
    \mathcal{U}=\left\{ 0\leq x<1, \ 0\leq y<1\right\}
\end{align*}
the operator takes one of the following three forms:
\begin{align}
    \label{2c}
    &L=ax\partial_{x}^{2}+bxy\partial_{xy}+cy\partial_{y}^{2}+d\partial_{x}+e\partial_{y}\tag{2c}\\
    \label{2d}
    &L=ax^{2}\partial_{x}^{2}+bxy\partial_{xy}+cy\partial_{y}^{2}+dx\partial_{x}+e\partial_{y}\tag{2d}\\
    \label{2e}
    &L=ax^2\partial_{x}^{2}+bxy\partial_{xy}+cy^{2}\partial_{y}^{2}+dx\partial_{x}+ey\partial_y.\tag{2e}
\end{align}
We assume that all coefficients $a(x,y), b(x,y), c(x,y), d(x,y), e(x,y)$ lie in $C^{\infty}(\mathcal{U})$. We call $q$ a regular regular corner, mixed corner, infinity corner, respectively.
\item[4.]The vector field is inward pointing at edge points of tyep $(2a)$ and corners of type ($2c$), vertical at edge points of $(2b)$, vertical up at corners of type $(2d)$.  
\item[5.] $a(x,y),\ c(x,y)$ are strictly positive on $P$.
\end{description}
\end{definition}

\begin{figure}[htbp] 
\centering 
\includegraphics[width=1.1\textwidth]{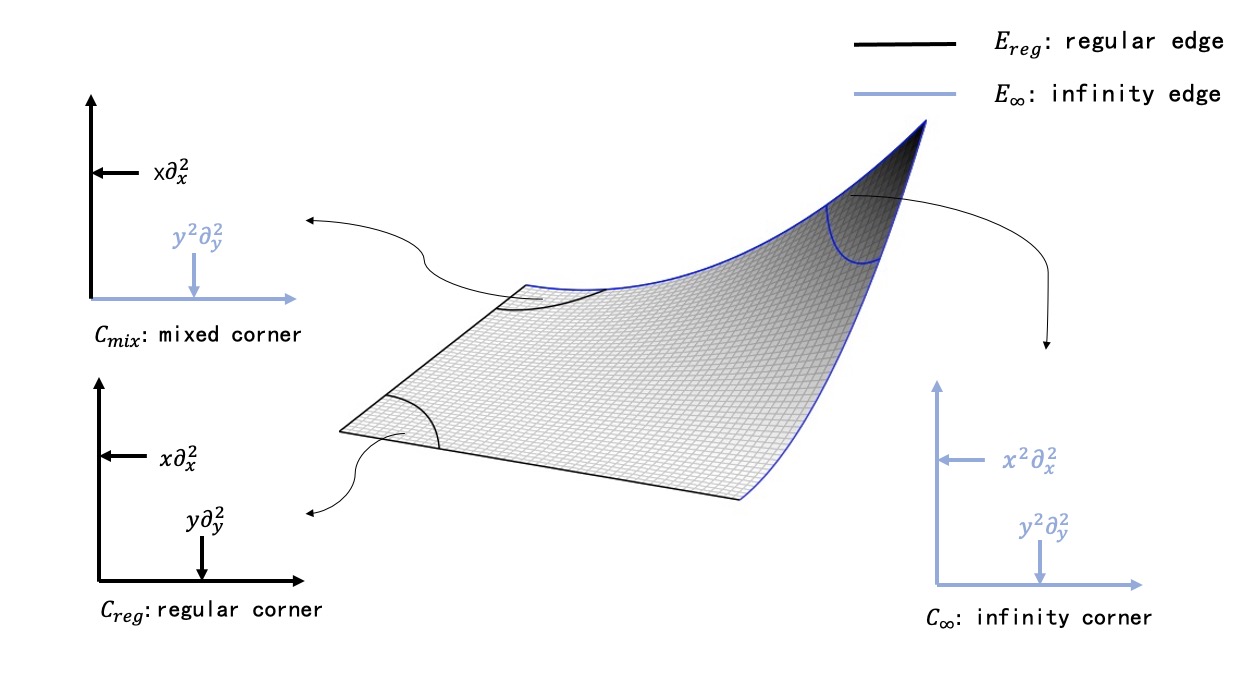} 
\caption{Different types of edges and corners} 
\label{Fig.corner} 
\end{figure}

\newpage

\begin{assumption}\label{assumption}
Let $E$ be a regular edge of $P$. We say
\begin{enumerate}
    \item $L$ is tangent to $E$ if for any point $p$ on $E$, the vector field perpendicular to the edge as locally defined in (\ref{2a}) vanishes, i.e., $d(p)=0$.
    \item $L$ is transverse to $E$ if there exists a $c_E>0$ such that at any point $p$ on $E$, the vector field perpendicular to the edge as locally defined in (\ref{2a}) does not vanish ,
    i.e., $d(p)>c_E$.
\end{enumerate}
We make the assumption that $L$ is either tangent or transverse to any regular edge. In this case we say $p$ is a {\em tangent point} if it lies on one tangent edge, otherwise we say it is a {\em transverse point}.
\end{assumption}

\begin{notation}\label{notation}
In the following we denote
\begin{enumerate}
    \item[\textbullet] $P$: a compact two-dimensional manifold with corners
    \item[\textbullet] $E_{reg}$: the union of regular edge points
    \item[\textbullet] $E_{\infty}$: the union of infinity edge points 
    \item[\textbullet] $C_{reg}$: the set of regular corners 
    \item[\textbullet] $C_{mix}$: the set of mixed corners 
    \item[\textbullet] $C_{\infty}$: the set of infinity corners 
    \item[\textbullet] $P^{reg}$: $\mathring{P}\cup E_{reg}\cup C_{reg}$
\end{enumerate}
\end{notation}

The distinguishing difference between the two forms of $L$ is the vanishing order of  coefficients of the normal part of the second order terms normal to the boundary. It motivates us to introduce a coordinate-invariant definition of $L$.  

We introduce the principal symbol of a differential operator of order $l$. It can be defined being independent of coordinates as a smooth function on the cotangent bundle $T^{*}P$. That is, at each $\xi\in T^{*}_{p}P$, choose a function $f$ such that $df(p) = \xi$ and then set
\begin{equation*}
    \sigma_{l}(D)(p, \xi) = \lim_ {t\to\infty} t^{-l}\left(e^{- itf} \circ D \circ e^{itf}\right)(p). 
\end{equation*}
$\sigma_{l}(D)(p, \xi)$ does not depend on the choice of $f$ is because $\sigma_{l}(D)(p, \xi)$ only depends on the first derivatives of $f$ or equivalently $\xi$.

We can also consider the principal symbol of the second order operator $L$ under the local charts, in the  variables $x,y$:
\begin{align}\label{eq}
    \sigma_{2}\left(L\right)\left(x,y;\xi,\eta\right)=\widehat{a}(x,y)\xi^{2}+\widehat{b}(x,y)\xi\eta+\widehat{c}(x,y)\eta^2.
\end{align}
Here, $\xi$ and $\eta$ are the dual (cotangent) variables associated to $x$ and $y$. We require that $\sigma_{2}\left(L\right)\left(x,y;\xi,\eta\right)$ is smooth, nonnegative for all $(\xi,\eta)$, and strictly positive when $(x,y)$ is not on the boundary and $\left(\mathbf{\mathrm{\mathbf{\xi,\eta}}}\right)\neq\left(0,0\right)$. Furthermore its characteristic set
\begin{align*}
    \mathrm{Char}\left(L\right)=\left\{ \left(x,y;\xi,\eta\right):\sigma_{2}\left(L\right)\left(x,y;\xi,\eta\right)=0\right\}
\end{align*}
 is equal to the set of all conormal vectors to $bP$, or more precisely to the set of all points $(q,v)$ where $q\in bP$ and $v\in T_{q}^{*}P$ vanishes on the tangent spaces of all boundary hypersurfaces which contain q. Finally we require that $\sigma_{2}\left(L\right)$ vanishes on conormal vectors to precisely first order on the edge of type $(a)$ and to second order on the edge of type $(b)$. 

Conversely, in a local coordinate system, at a point $q$ in the interior of the edge $x=0$, the conormal is spanned by the vector $v=dx$. If we write $\sigma_{2}\left(L\right)\left(x,y;\xi,\eta\right)$ as a quadratic form as above, because $\sigma_{2}\left(L\right)\left(q;\xi,\eta\right)$ only vanishes on the conormal vectors,  this gives that $\widehat{a}(q)=\widehat{b}(q)=0,\ \widehat{c}(q)\neq 0$. If the vanishing is simple, then $\widehat{a}(x,y)=xa(x,y),\ \widehat{b}(x,y)=xb(x,y)$. If the vanishing is quadratic, then $\widehat{a}(x,y)=x^2a(x,y),\ \widehat{b}(x,y)=xb(x,y)$. Then at the intersection of two edges, the corner should have three kinds of forms and the forms are given naturally.

\subsection{H\"older Weighted Space}\label{holder weighted space}
In \cite[II.5]{EM}, Epstein and Mazzeo introduce anisotropic H\"older spaces, which are similar to the usual H\"older spaces, but are defined by a singular metric on $\mathbb{R}_+^n\times\mathbb{R}^m$ induced by the Kimura diffusion operator:
\begin{gather*}
    ds^2_{\text{WF}}=\sum_{j=1}^{n}\frac{dx_j^2}{x_j}+\sum_{k=1}^{m}dy_k^2.
\end{gather*}
Motivated by this, we first define the local singular metric induced by the operator then define the associated H\"older spaces induced by the local metric. We denote $\mathbb{R}_+:=[0,\infty)$.

\begin{definition}
We define the metric spaces
\begin{enumerate}
    \label{s_c_reg}
    \item $S_{c\_reg}$: $(\mathbb{R}_+^2,d)$ equipped with the norm
    \[d((x_{1},x_{2}),(x'_{1},x'_{2}))=2|\sqrt{x_{1}}-\sqrt{x'_{1}}|+2|\sqrt{x_{2}}-\sqrt{x'_{2}}|.\]
    
    \label{s_c_mix}
    \item $S_{c\_mix}$: $(\mathbb{R}_+^2,d)$ equipped with the norm    
    \[d((x,y),(x',y'))= 2|\sqrt{x}-\sqrt{x'}|+|\text{ln} y-\text{ln} y'|.\]
    
    \label{s_c_infty}
    \item $S_{c\_\infty}$: $(\mathbb{R}_+^2,d)$ equipped with the norm    
    \[d((y_1,y_2),(y_1',y_2'))= |\text{ln} y_1-\text{ln} y_1'|+|\text{ln} y_2- \text{ln} y_2'|.\]
    
    \label{s_e_reg}
    \item $S_{e\_reg}$: $(\mathbb{R}_+\times\mathbb{R},d)$ equipped with the norm
    \[d((x,y),(x',y'))=2|\sqrt{x}-\sqrt{x'}|+|y-y'|.\]
    
    \label{s_e_infty}
    \item $S_{e\_\infty}$: $(\mathbb{R}\times\mathbb{R}_+,d)$ equipped with the norm    
    \[d((y_1,y_2),(y_1',y_2'))= |y_1-y_1'|+|\text{ln}y_2- \text{ln} y_2'|.\]
\end{enumerate}
\end{definition}
We use the $(x,y)$ notation above: $L$
is Kimura in the direction of $x$, and is elliptic or quadratic in the direction of $y$. To unify the notation, we make the following convention of first tangential derivative of $y$:
\begin{gather*}
D_yu=
\begin{cases} 
\begin{array}{cc}
\partial_yu & L\ \text{is elliptic in}\ y, i.e. \ref{s_e_reg}\\
y\partial_yu &  L\  \text{is quadratic in}\ y, i.e. \ref{s_c_mix}, \ref{s_c_infty}, \ref{s_e_infty}
\end{array}.
\end{cases}
\end{gather*}

\begin{definition}
Let $S$ represents one of the metric spaces above. We denote 
\begin{enumerate}
    \item $\dot{C}^k(S)$: the closure in $C^k(S)$ of compactly supported smooth functions
    
    \item $\dot{C}^{k,2}(S)$:  the closure in $C^{k}(S)$ of compactly supported smooth functions with respect to the norm:
    \begin{gather*}
        ||f||_{k,2}:=||f||_{C^{k-1}}+\underset{|\alpha|+|\beta|=k}{\sup}||(\mathbf{\partial_x})^\alpha(\mathbf{D_y})^{\beta}f||_2,\\
        ||f||_{2}:=||f||_{\infty}+||(\partial_xf, \mathbf{D_y}f)||_{\infty}+\underset{|\alpha|+|\beta|=2}{\sum}||(\mathbf{\sqrt{x}\partial_x})^{\alpha}(\mathbf{D_y})^{\beta}f||_{\infty}.
    \end{gather*}
\end{enumerate}
For $0<\gamma<1$, set the norm
\begin{gather*}
    [f]_{\gamma}:=\underset{\mathbf{x}\neq\mathbf{x'}}{sup}\frac{|f(\mathbf{x})-f(\mathbf{x'})|}{d(\mathbf{x},\mathbf{x'})^\gamma},\\
    [f]_{2+\gamma}:=[f]_{\gamma}+[(\mathbf{\partial_x}f, \mathbf{D_y}f)]_\gamma+\underset{|\alpha|+|\beta|=2}{\sum}[(\mathbf{\sqrt{x}\partial_x})^{\alpha}(\mathbf{D_y})^\beta f]_\gamma.
\end{gather*}
The space $C^{k,\gamma}(S)$,  $C^{k,2+\gamma}(S)$ are the subspace of $\dot{C}^{k}(S)$, $\dot{C}^{k,2}(S)$ consisting of functions $f$ for which the norm 
\begin{gather*}
    ||f||_{k,\gamma}:= || f ||_{C^k} + \underset{|\alpha|+|\beta|=k}{sup} [\mathbf{\partial^{\alpha}_{x}}\mathbf{D^\beta_y}f]_{\gamma},\\
    ||f||_{k,2+\gamma}:= || f ||_{C^{k}} + \underset{|\alpha|+|\beta|=k}{sup} [\mathbf{\partial^{\alpha}_{x}}\mathbf{D^\beta_y}f]_{2+\gamma}.
\end{gather*}
are finite, respectively.
\end{definition}

Similarly we define the parabolic H\"older spaces.

\begin{definition}
We denote
\begin{enumerate}
    \item $\dot{C}^{k,\frac{k}{2}}(S\times [0,T])$: the closure in $C^{k,\frac{k}{2}}(S\times [0,T])$ of compactly supported smooth functions

    \item $\dot{C}^{k+2,\frac{k}{2}+1}(S\times [0,T])$: the closure in $C^{k,\frac{k}{2}}(S\times [0,T])$ of compactly supported smooth functions with respect to the norm:
    \begin{gather*}
        ||f||_{k+2,\frac{k}{2}+1}:=||f||_{C^{k,\frac{k}{2}}}+\underset{|\alpha|+|\beta|+ 2|j|=k}{sup}||(\mathbf{\partial_{x}})^{\alpha}(\mathbf{D_y})^{\beta}\partial^j_tf||_{2,1},\\
        ||f||_{2,1}:=||f||_{\infty}+||(\partial_{t}f,\mathbf{\partial_{x}}f,\mathbf{D_y}f)||_{\infty}+\underset{|\alpha|+|\beta|=2}{sup}\||(\mathbf{\sqrt{x}\partial_{x}})^{\alpha}(\mathbf{D_y})^{\beta}f||_{\infty}.
    \end{gather*}
\end{enumerate}
For $0<\gamma<1$, set the norm 
\begin{align*}
    [f]_{\gamma}:=\underset{(t,\mathbf{x})\neq(s, \mathbf{x'})}{sup}\frac{|f(t,\mathbf{x})-f(s,\mathbf{x'})|}{\left(d(\mathbf{x},\mathbf{x'})+\sqrt{|t-s|}\right)^\gamma},\\
    [f]_{2+\gamma}:=[f]_\gamma+[(\partial_{t}f,\mathbf{\partial_x}f,\mathbf{D_y}f)]_\gamma+\underset{|\alpha|+|\beta|=2}{\sum}[(\mathbf{\sqrt{x}\partial_{x}})^{\alpha}(\mathbf{D_y})^{\beta}f]_{\gamma}.
\end{align*}
The space $C^{k,\gamma}(S\times [0,T])$, $C^{k,2+\gamma}(S\times [0,T])$ are the subspaces of $\dot{C}^{k,\frac{k}{2}}(S\times [0,T])$, $\dot{C}^{k+2,\frac{k}{2}+1}(S\times [0,T])$ consisting of functions $f$ for which the norm 
\begin{gather*}
    ||f||_{k,\gamma}:=|| f ||_{C^{k,\frac{k}{2}}} + \underset{|\alpha|+|\beta|+2j=k}{sup} [\mathbf{\partial^{\alpha}_{x}}\mathbf{D^{\beta}_y}\partial^{j}_{t}f]_{\gamma},\\
    ||f||_{k,2+\gamma}:=|| f ||_{C^{k,\frac{k}{2}}} + \underset{|\alpha|+|\beta|+2j=k}{sup} [\mathbf{\partial^{\alpha}_{x}}\mathbf{D^{\beta}_y}\partial^j_tf]_{2+\gamma}
\end{gather*}
are finite, respectively. 
\end{definition}

\section{Model Operator}\label{model_operator}
\subsection{Fundamental Solutions}
\paragraph{1. Analysis of $L_{M}$ at $C_{reg}$}
(\cite{EM} Proposition 3.0.1) If the boundary point $q\in C_{reg}$, then there is a neighborhood $U_{q}$ of $q$ and smooth local coordinates $(x_1,x_2)$ centered at $q$, in terms of which $L$ takes the form 
\begin{align}\label{c_reg}
L=x_1\partial_{x_1}^2+x_2\partial_{x_2}^2+a(x_1,x_2)\partial_{x_1x_2}+d(x_1,x_2)\partial_{x_1}+e(x_1,x_2)\partial_{x_2}.
\end{align}
We introduce the model operator in the space $S_{c\_reg}$:
\begin{align}\label{c_reg1}
    L_{M}=x_{1}\partial_{x_1}^2+x_{2}\partial_{x_{2}}^2+d\partial_{x_{1}}+e\partial_{x_{2}}
\end{align}
where $d=d(0,0), e=e(0,0)$.
In this case $L_{M}$ is the generalized Kimura diffusion operator which has been studied in
\cite{EM}. The solution kernel of $L_{M}$ is the product of two one-dimensional Kimura operator kernel: 
\begin{align}\nonumber
    K_{t}(x_1,x_2,x'_{1},x'_2)=p^{d}_t(x_1,x'_{1})p_t^e(x_2,x'_2)
\end{align}
where $p^{d}_t(x_1,x'_{1})$ satisfies $p_{t}=xp_{xx}+dp_{x}$, and is explicitly given by \cite{CW19}(38): 
if $d>0$:
\begin{gather}
    p^d_t(x,x')=\frac{1}{t}e^{-\frac{x+x'}{t}}(\frac{x}{x'})^{\frac{1-d}{2}}I_{d-1}\left(2\sqrt{\frac{xx'}{t^{2}}}\right)\\
    =(\frac{x'}{t})^{d}e^{-\frac{x+x'}{t}}\psi_{d}(\frac{xx'}{t^{2}})\frac{1}{x'}
\end{gather}
if $d=0$,
\begin{align}\nonumber
    p^d_t(t,x,x')=\frac{1}{t}e^{-\frac{x+x'}{t}}(\frac{x}{x'})^{\frac{1}{2}}I_{1}\left((2\sqrt{\frac{xx'}{t^2}}\right)+e^{-\frac{x}{t}}\delta(x')
\end{align}
here $I_{\alpha}$ is the modified Bessel function of the first kind that satisfies $x^{2}y''+xy'-(x^{2}+\alpha^{2})y=0$

\paragraph{2. Analysis of $L_{M}$ at $E_{reg}$}\label{4} 
If the boundary point $q\in E_{reg}$, then there is a neighborhood $U_{q}$ of $q$ and smooth local coordinates $(x,y)$ centered at $q$, in terms of which $L$ takes the form 
\begin{align}\label{ereg}
L=x\partial_x^2+a(x,y)\partial_y^2+b(x,y)\partial_{xy}+d(x,y)\partial_x+e(x,y)\partial_y
\end{align}
where $a(0,0)=1$.
We introduce the model operator in the space $S_{e\_reg}$:
\begin{align}\nonumber
    L_{M}=x\partial^2_x+\partial^2_y+d\partial_x.
\end{align}
The solution kernel of $L_{M}$ is
\begin{align*}
    K_{t}(x,y,x_{1},y_{1})=p^d_t(x,x_{1})k^e_t(y,y_1)
\end{align*}
where $k_t^e(y,y_1)=\frac{1}{\sqrt{4\pi t}}e^{-\frac{(y-y_1)^2}{4t}}$ is the heat kernel.

\paragraph{3. Analysis of $L_{M}$ at $C_{mix}$}\label{3} 
If the boundary point $q\in C_{mix}$, then there is a neighborhood $U_{q}$ of $q$ and smooth local coordinates $(x,y)$ centered at $q$, in terms of which $L$ takes the form 
\begin{align}\label{cmix}
L=a(x,y)x\partial_x^2+b(x,y)y^2\partial^2_y+c(x,y)xy\partial_{xy}+d(x,y)\partial_{x}+e(x,y)y\partial_{y}
\end{align}
where $a(0,0)=1$.
We introduce the model operator in the space $S_{c\_mix}$:
\begin{align}\nonumber
    L_M=x\partial_x^2+b\partial_y^2+d\partial_x+by\partial_y
\end{align}
where $b=b(0,0)$.
The solution kernel of $L_{M}$ is
\begin{align*}
    K_{t}(x,y,x_{1},y_{1})=p^d_t(x,x_{1})k^{e'}_{bt}(y,y_1)
\end{align*}
where $k_{bt}^{e'}(y,y_1)=\frac{1}{\sqrt{4\pi b t}}e^{-\frac{(\text{ln} y-\text{ln} y_1)^2}{4bt}}\frac{1}{y_1}$.

\paragraph{4. Analysis of $L_M$ at $E_\infty$}
If the boundary point $q\in E_{\infty}$, then there is a neighborhood $U_{q}$ of $q$ and smooth local coordinates $(x,y)$ centered at $q$, in terms of which $L$ takes the form 
\begin{align}\label{einfty}
L=a(y_1,y_2)\partial_{y_1}^2+b(y_1,y_2)y_2^2\partial^2_{y_2}+c(y_1,y_2)y_1y_2\partial_{y_1y_2}+d(y_1,y_2)\partial_{y_1}+e(y_1,y_2)y_2\partial_{y_2}
\end{align}
where $a(0,0)=1$.
We introduce the model operator in the space $S_{e\_\infty}$:
\begin{align}\nonumber
    L_M=\partial_{y_1}^2+by_2^2\partial_{y_2}^2+by_2\partial_{y_2}
\end{align}
where $b=b(0,0)$.
The solution kernel of $L_{M}$ is
\begin{align*}
    K_{t}(y_1,y_2,y'_1,y'_2)=k^e_t(y_1,y'_1)k^{e'}_{bt}(y_2,y'_2).
\end{align*}

\paragraph{5. Analysis of $L_M$ at $C_\infty$}
If the boundary point $q$ is in $C_\infty$, then there is a neighborhood $U_{q}$ of $q$ and smooth local coordinates $(x,y)$ centered at $q$, in terms of which $L$ takes the form 
\begin{align}\label{cinfty}
L=a(y_1,y_2)y_1^2\partial_{y_1}^2+b(y_1,y_2)y_2^2\partial_{y_2}^2+c(y_1,y_2)y_1y_2\partial_{y_1y_2}+d(y_1,y_2)y_1\partial_{y_1}+e(y_1,y_2)y_2\partial_{y_2}.
\end{align}
We introduce the model operator in the space $S_{c\_\infty}$:
\begin{align}\nonumber
    L_M=ay_1^2\partial_{y_1}^2+by_2^2\partial_{y_2}^2+ay_1\partial_{y_1}+by_2\partial_{y_2}
\end{align}
where $a=a(0,0), b=b(0,0)$.
The solution kernel of $L_{M}$ is
\begin{align*}
    K_{t}(y_1,y_2,y'_{1},y'_{2})=k^{e'}_{at}(y_1,y'_1)k^{e'}_{bt}(y_2,y'_2).
\end{align*}

\begin{definition}
The coordinates and forms of $L$ introduced above are called \textbf{local adapted coordinates} and \textbf{local normal forms centered at $q$}.
\end{definition}

\subsection{H\"older Estimates} 
Let $S$ be one of the spaces in $C_{reg}, C_{mix}, C_{\infty}, E_{reg}, E_{\infty}$. 
\begin{proposition}\label{cauchy}
Let $k\in\mathbb{N}$, $R>0$, and $0<\gamma<1$, assume that $f\in C^{k,\gamma}(S)$ and $f$ is supported in $B^{+}_{R}(\textbf{0})$. The solution $v$ to
\begin{align}\label{cauchy_boundary}
    (\partial_{t}-L_{M})v(t,x,y)=0,\ v(0,x,y)=f
\end{align}
belongs to $C^{k,\gamma}(S\times[0,T])$, and there is a constant $C_{k,\gamma,R}$ so that
\begin{align*}
    ||v||_{k,\gamma}\leq C_{k,\gamma,R}|| f||_{k,\gamma}.
\end{align*}
If $f\in\mathcal{C}^{k,2+\gamma}(S)$, then 
\begin{align*}
    ||v||_{k,2+\gamma}\leq C_{k,\gamma,R}|| f||_{k,2+\gamma}.
\end{align*}
\end{proposition}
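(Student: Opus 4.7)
The plan is to exploit the tensor product structure of the model fundamental solutions: in every one of the five cases listed, the kernel has the form $K_t(\mathbf{x},\mathbf{x}')=P_t^{(1)}(x_1,x_1')\,P_t^{(2)}(x_2,x_2')$, where each factor is the fundamental solution of a one-dimensional operator --- either the one-dimensional Kimura operator $x\partial_x^2+d\partial_x$, the Euclidean heat operator $\partial_y^2$, or the log-scaled heat operator $b y^2\partial_y^2+by\partial_y$. By Fubini the solution operator factors as the commuting composition $v(t,\cdot)=e^{tL_M^{(1)}}e^{tL_M^{(2)}}f$ of two one-variable heat semigroups. So the proof reduces to (i) proving the corresponding one-dimensional Hölder estimates for each of these three one-variable semigroups, and (ii) assembling them into a two-dimensional statement compatible with the anisotropic norms $||\cdot||_{k,\gamma}$ and $||\cdot||_{k,2+\gamma}$.

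For step (i), I would invoke or directly adapt known results. The one-dimensional Kimura semigroup on $(\mathbb{R}_+,2|\sqrt{x}-\sqrt{x}'|)$ satisfies exactly the Hölder estimates required here; this is established in \cite{EM} by means of explicit bounds on $p_t^d$ and its derivatives, and is the model on which our definitions of $C^{k,\gamma}$ and $C^{k,2+\gamma}$ are patterned. The Euclidean heat factor $\partial_y^2$ is handled by the classical parabolic Schauder estimates. For the log-scaled heat operator $by^2\partial_y^2+by\partial_y$, the change of variables $z=\ln y$ conjugates it to a constant coefficient heat equation $b\partial_z^2$ on $\mathbb{R}$; the metric $|\ln y-\ln y'|$ becomes the Euclidean metric in $z$, and the tangential derivative $D_y=y\partial_y$ becomes $\partial_z$, so the anisotropic Hölder norms in $y$ become standard parabolic Hölder norms in $z$. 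Hence this case also reduces to the classical estimate.

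For step (ii), the key point is that the metric $d$ on each of the five model spaces is the sum of the two one-dimensional metrics, which means the Hölder seminorm of any $g(\mathbf{x})$ is controlled by the sum of the one-dimensional Hölder seminorms with the other variable frozen:
\begin{equation*}
[g]_{\gamma}\;\le\;\sup_{x_2}[g(\cdot,x_2)]_{\gamma,x_1}\;+\;\sup_{x_1'}[g(x_1',\cdot)]_{\gamma,x_2}.
\end{equation*}
Combined with the commutativity of $e^{tL_M^{(1)}}$ and $e^{tL_M^{(2)}}$, this lets me apply the one-dimensional estimate in $x_1$ to $e^{tL_M^{(2)}}f$ (treating $x_2$ as a parameter) and then the one-dimensional estimate in $x_2$ to $f$ itself, picking up a constant depending only on $R,k,\gamma$. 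The parabolic metric $d(\mathbf{x},\mathbf{x}')+\sqrt{|t-s|}$ decomposes analogously, and time regularity is transferred from spatial regularity via $\partial_t v=L_M v$. The compact support of $f$ in $B_R^+(\mathbf{0})$ is used only to guarantee that integrals against the kernels converge and that the one-dimensional Hölder estimates apply with a constant depending on $R$.

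The main technical obstacle lies in the $C^{k,2+\gamma}$ estimate and particularly the mixed derivatives such as $\sqrt{x}\,\partial_x D_y v$, which involve one anisotropic derivative in each variable. Because the two one-variable operators commute, derivatives pass through each semigroup, but one still has to bound the Hölder seminorm of the mixed derivative by applying a one-dimensional Hölder estimate in the other variable to a quantity that already involves a one-dimensional derivative; this requires the one-dimensional estimates to hold uniformly in the frozen variable, which is where the smoothness of the kernels in their spatial arguments and the compact support of $f$ enter decisively. Verifying this uniformity case by case, together with checking that the resulting $v$ actually lies in the closure of compactly supported smooth functions used to define $\dot C^{k,2}$ and $\dot C^{k+2,k/2+1}$, is the substantive content I expect to occupy the remainder of this section.
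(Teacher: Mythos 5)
Your tensor-product reduction to one-dimensional semigroups is the right structural observation, and it is essentially what the paper does for the regular cases $S_{c\_reg}$, $S_{e\_reg}$ (which it simply cites from \cite{EM}). But for the mixed and infinity cases the proposal has a genuine gap at exactly the point you flag as ``the substantive content I expect to occupy the remainder of this section.'' After the substitution $z=\ln y$, the quadratic boundary $\{y=0\}$ is sent to $\{z=-\infty\}$, and a function $f$ compactly supported in $B_R^+(\mathbf{0})$ in the $(x,y)$ variables becomes a function $\tilde f(x,z)=f(x,e^z)$ whose support is \emph{unbounded} as $z\to-\infty$, with $\tilde f(x,-\infty)=f(x,0)$ generically nonzero. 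So the claim that ``the compact support of $f$ ... is used only to guarantee that integrals against the kernels converge and that the one-dimensional H\"older estimates apply with a constant depending on $R$'' is false after the change of variables, and the cited one-dimensional Schauder estimates (which are stated for compactly supported data) do not apply directly. More seriously, the conclusion you need --- that $v\in C^{k,\gamma}(S\times[0,T])$ --- carries nontrivial information at the original boundary $y=0$: continuity of $v$ and its derivatives up to $y=0$, and the H\"older condition there in the $|\ln y-\ln y'|$ metric, plus the compatibility that the restriction of $v$ to $y=0$ solves the one-dimensional Kimura Cauchy problem with data $f(\cdot,0)$. None of this is automatic from invoking the heat semigroup on $\mathbb{R}$ in the $z$ variable.

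The paper resolves precisely this point by decomposing $f=f_1+f_2$, with $f_2(x,z)=f(x,-\infty)\,\chi(z)$, where $\chi$ is a cutoff with $\chi(-\infty)=1$. Then $f_1$ genuinely vanishes at the end $z=-\infty$, so the one-dimensional estimate applies, and one verifies that the relevant derivatives of the corresponding $v_1$ extend by zero to $z=-\infty$. For $f_2$ the kernel factorizes, and the conservation property of the Gaussian kernel gives $\lim_{z\to-\infty}I_2(t,z)=\chi(-\infty)=1$, so $K_tf_2$ restricts on $y=0$ to the one-dimensional Kimura evolution of $f(\cdot,0)$; this is what supplies the boundary compatibility. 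Your scheme needs to be supplemented by this decomposition (or an equivalent device that separates off the limiting trace at the quadratic boundary) before the one-dimensional reductions can legitimately be chained; without it the argument does not establish H\"older regularity of $v$ up to $y=0$ in the metric $|\ln y-\ln y'|$, nor even that $v$ lies in the closure $\dot C^k(S)$ used to define the space.
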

\begin{proof}
At $C_{reg}$ and $E_{reg}$, these are known results in \cite{EM} Proposition 10.1.1. 
At $C_{mix}$, 
\[L_M=x\partial_x^2+d\partial_x+y^2\partial^2_y+y\partial_y.\]
Through a coordinate change $y\mapsto \text{ln} y$, it is equivalent to consider
\[L_M=x\partial_x^2+d\partial_x+\partial^2_y\]
$\text{\ in}\ \mathbb{R}_+\times\mathbb{R}$ equipped with the induced metric $||\cdot||'_{k,\gamma}, ||\cdot||'_{k,2+\gamma}$. This metric is slightly different from (\ref{s_e_reg}) in that the $k-$partial derivative is well defined at $y=-\infty$ and when restricted to $y=-\infty$ satisfies the corresponding h\"older continuous conditions with respect to $x$.

Let $f\in C^{k,\gamma}(\mathbb{R}_+\times\mathbb{R})$, we first decompose $f$ as \begin{align*}
    f=f_{1}+f_{2}=f_{1}+f(x, -\infty)\chi(y)
\end{align*}
where $\chi\in\mathcal{C}_c^\infty([-\infty,\infty)),\ \chi(-\infty)=1$ such that $||f_2||'_{k,\gamma}\leq 2||f||'_{k,\gamma}$.
Then $f_1\in C^{k,\gamma}(\mathbb{R}_+\times\mathbb{R})$ vanishes at infinity, for which we can apply \cite{EM} Proposition 10.1.1 again to show that there exists a solution \[v_1=\int_{\mathbb{R}_+\times\mathbb{R}} p^d_t(x,x_1)k^e_t(y,y_1)f_1(x_1, y_1)dx_1dy_1\in C^{k,\gamma}(\mathbb{R}_+\times\mathbb{R}\times [0,\infty))\]
with initial data $f_1$ such that 
\begin{align*}
    ||v_1||_{k,\gamma}\leq C||f_1||_{k,\gamma}=C||f_1||'_{k,\gamma}\leq 3C||f||'_{k,\gamma}.
\end{align*}
When $k=0$, it is not hard to show that $v_1, \partial_yv_1, \partial_{yy}v_1$ can be continuously extend to $y=-\infty$ by 0. When $k>0$, using the formula
\begin{gather*}
    \partial^\alpha_x\partial^\beta_y\partial^j_tv_1=\int_{\mathbb{R}_+\times\mathbb{R}} p^{d+\alpha}_t(x,x_1)k^e_t(y,y_1)(\partial^\alpha_{x_1}\partial^\beta_{y_1}L^j_{d+\alpha,M})f_1(x_1, y_1)dx_1dy_1,
\end{gather*}
we can show that for $\alpha+\beta+2j=k, t\geq 0$, $v_1,  \partial_y(\partial_x^\alpha\partial^\beta_y\partial^j_t)v_1, \partial_{yy}(\partial_x^\alpha\partial^\beta_y\partial^j_t)v_1$ can be continuously extend to $y=-\infty$ by 0. Thus $(\partial_t-L_M)v_1=0, v_1(0,\cdot)=f_1$ holds up to $y=-\infty$ and
\begin{align*}
    ||v||'_{k,\gamma}=||v||_{k,\gamma}\leq  3C||f||'_{k,\gamma}.
\end{align*}

This leaves the Cauchy problem with initial data $f_2$. We begin by writing
\begin{gather*}
    K_tf_2(x,y)=\int_S K_t(x,y,x_1,y_1)f(x_1,0)\chi(y_1)dx_1dy_1\\
    =\left(\int_0^\infty p^d_t(x,x_1)f(x_1,0))dx_1\right)\left(\int_{-\infty}^\infty k_t^e(y,y_1)\chi(y_1)dy_1\right)=:I_1(t,x)\cdot I_2(t,y).
\end{gather*}
Since $f(x,0)\in C^{k,\gamma}(\mathbb{R}_+), \chi(y)\in C^{k,\gamma}(\mathbb{R})$, then $I_1(t,x)\in  C^{k,\gamma}(\mathbb{R}_+\times [0,T]), I_2(t,y)\in C^{k,\gamma}(\mathbb{R}\times [0,T])$ are solutions to one-dimensional Cauchy problem respectively. Moreover, since
\begin{gather*}
    \underset{y\to -\infty}{\lim}I_2(t,y)=\chi(-\infty)=1,
\end{gather*} 
$K_tf_2$ can be continuous extended to  $y=-\infty$ by $K_tf_2(x,-\infty)=I_1(t,x)$. Moreover for $\alpha+\beta+2j=k, t>0$, $\partial_y(\partial^\alpha_x\partial^\beta_y\partial^j_t)K_tf_2, \partial_{yy}(\partial^\alpha_x\partial^\beta_y\partial^j_t)K_tf_2$ can also be continuously extended to $y=-\infty$ by 0.

So $K_tf_2\in C^{k,\gamma}(\mathbb{R}_+\times\mathbb{R}\times [0,T])$ is the solution to the Cauchy problem with initial data $f_2$ such that
\begin{align*}
    ||K_tf_2||'_{k,\gamma}\leq C||f_2||'_{k,\gamma}\leq 3C||f||'_{k,\gamma}.
\end{align*}
In all, let $v=v_1+K_tf_2$, then $v$ satisfies (\ref{cauchy_boundary}) up to the quadratic boundary $y=0$ and 
\[||v||_{k,\gamma}\leq 6 C||f||_{k,\gamma}.\] 
Particularly along the edge $y=0$, $v(t,x,0)=I_1(t,x)$ and satisfies
\begin{gather*}
    (\partial_t-x\partial_x^2-d\partial_x)v(t,x,0)=0\ \text{with}\  v(0,x,0)=f(x,0).
\end{gather*}

If $f\in C^{k,2+\gamma}(\mathbb{R}_+\times\mathbb{R})$, this shall be established similarly. For the remaining case when $S=E_{reg}, E_{\infty}$, the proofs are essentially the same as above and we don't give details here.
\end{proof}

\begin{proposition}\label{inhomogeneous}
Let $k\in\mathbb{N}$, $R>0$, and $0<\gamma<1$. Assume that $g\in C^{k,\gamma}(S\times [0,T])$ and $g$ is supported in $B^{+}_{R}(0)\times [0,T]$. The solution $u$ to
\begin{align}\label{inhomo_problem}
    (\partial_{t}-L_{M})u(t,x,y)=g(t,x,y),\ u(0,x,y)=0
\end{align}
belongs to $C^{k,2+\gamma}(S\times[0,T])$, and there is a constant $C_{k,\gamma,R}$ so that
\begin{align*}
    ||u||_{k,2+\gamma}\leq C_{k,\gamma,R}(1+T)||g||_{k,\gamma}.
\end{align*}
The tangential first derivatives satisfy a stronger estimate: there is a constant $C$ so that if $T\leq 1$, then
\begin{align}\label{tangent_y}
   ||D_yu||_{\gamma,T}\leq CT^{\frac{\gamma}{2}}||u||_{\gamma,T}.
\end{align}
\end{proposition}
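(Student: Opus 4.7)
The plan is to represent the solution via Duhamel's principle,
\[
u(t,x,y)=\int_0^t \bigl(K_{t-s}g(s,\cdot)\bigr)(x,y)\,ds,
\]
where $K_t$ is the solution operator built from the fundamental solutions in Section 3.1, and then to verify the Hölder estimate case by case on the five model spaces. For $S=C_{reg}$ and $S=E_{reg}$ the model operator $L_M$ coincides with the generalized Kimura operator of \cite{EM}, so the estimate $\|u\|_{k,2+\gamma}\leq C(1+T)\|g\|_{k,\gamma}$ is an immediate application of \cite{EM} Proposition 10.2.1.

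For $S\in\{C_{mix},C_\infty,E_\infty\}$ the strategy mirrors the proof of Proposition \ref{cauchy}. First I would perform the change of variables $y\mapsto\ln y$ (and analogously in each quadratic direction $y_i$) so that $L_M$ becomes a Kimura-times-Euclidean (or pure Euclidean) operator on $\mathbb{R}_+\times\mathbb{R}$ or $\mathbb{R}^2$. Then I decompose $g=g_1+g_2$ with $g_2(t,x,y)=g(t,x,-\infty)\chi(y)$ and $\chi\in\mathcal{C}_c^\infty([-\infty,\infty))$ satisfying $\chi(-\infty)=1$, chosen so that $\|g_2\|'_{k,\gamma}\leq 2\|g\|'_{k,\gamma}$. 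The piece $g_1$ vanishes at $y=-\infty$ and is handled directly by \cite{EM} Proposition 10.2.1 on $\mathbb{R}_+\times\mathbb{R}$, while Duhamel applied to $g_2$ factorizes as
\[
\int_0^t\!\left(\int_0^\infty p^d_{t-s}(x,x_1)g(s,x_1,-\infty)\,dx_1\right)\!\left(\int_{\mathbb{R}}k^e_{t-s}(y,y_1)\chi(y_1)\,dy_1\right)ds,
\]
each factor being the solution of a one-dimensional inhomogeneous Cauchy problem, to which the classical one-dimensional $C^{k,2+\gamma}$ estimate applies. Continuous extension to $y=-\infty$ of all derivatives $\partial_y(\partial^\alpha_x\partial^\beta_y\partial^j_t)u$ and $\partial_{yy}(\partial^\alpha_x\partial^\beta_y\partial^j_t)u$ is verified exactly as in Proposition \ref{cauchy} using the explicit kernel formulas; undoing the logarithmic change translates this into the desired estimate up to the quadratic boundary $y=0$ in the original coordinates.

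For the tangential first-derivative estimate (\ref{tangent_y}) I would exploit that $u(0,\cdot)=0$: since $D_yK_{t-s}$ acts as a half-order-in-time smoothing in the $y$ direction (Euclidean or log-Euclidean), the time integral $\int_0^t D_yK_{t-s}g(s,\cdot)\,ds$ picks up the factor $\int_0^T(T-s)^{-1/2+\gamma/2}\,ds\lesssim T^{\gamma/2}$ when measured in $C^{0,\gamma}$. The main obstacle will be reassembling the factor-wise one-dimensional estimates into the full anisotropic Hölder norm in $(t,x,y)$: mixed Hölder differences in space and time must be split and recombined via a Minkowski-type argument, and it is precisely through the time integral in Duhamel's formula that the additional $(1+T)$ factor (absent in the homogeneous problem of Proposition \ref{cauchy}) naturally appears.
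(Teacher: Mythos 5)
Your proposal is correct and matches the paper's approach essentially exactly: the same log change of variables in the quadratic directions, the same decomposition $g=g_1+g_2$ with $g_2$ capturing the boundary trace, reduction to \cite{EM} Proposition 10.2.1 for $g_1$, and factorization of the Duhamel integral for $g_2$ into two one-dimensional solution operators. Your identification of the $(1+T)$ factor as coming from the time integral and the $T^{\gamma/2}$ factor from the half-order smoothing of $D_y K_{t-s}$ in the tangential direction likewise tracks the paper's treatment of \eqref{partial_y_estimate_1}--\eqref{partial_y_estimate_2}.
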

\begin{proof}
At $C_{reg}$ and $E_{reg}$, these are known results in \cite{EM} Proposition 10.2.1 . 
At $C_{mix}$, 
\[L_M=x\partial_x^2+d\partial_x+y^2\partial^2_y+y\partial_y.\]
Through a coordinate change $y\mapsto \text{ln} y$, we may view
\[L_M=x\partial_x^2+d\partial_x+\partial^2_y\ \ \text{\ in}\ \mathbb{R}_+\times\mathbb{R}\]
equipped with the induced metric $||\cdot||'_{k,\gamma}, ||\cdot||'_{k,2+\gamma}$.

Let $g\in C^{k,\gamma}(S\times [0,T])$. We first decompose $g$ as 
\begin{align*}
    g=g_1+g_2=g_1+g(t,x,0)\chi(y)
\end{align*}
where $\chi\in\mathcal{C}_c^\infty([-\infty,\infty)),\ \chi(-\infty)=1$ such that $||g_2||'_{k,\gamma}\leq 2||g||'_{k,\gamma}$.
Then $g_1\in C^{k,\gamma}(\mathbb{R}_+\times\mathbb{R}\times [0,T])$ vanishes at infinity, for which we can apply \cite{EM} Proposition 10.2.1 to show that there exists a solution $u_1\in C^{k,2+\gamma}(\mathbb{R}_+\times\mathbb{R}\times [0,T])$ with initial data 0 such that 
\begin{gather*}
    ||u_1||_{k,2+\gamma}\leq C(1+T)||g_1||_{k,\gamma}=C(1+T)||g_1||'_{k,\gamma}\leq 3C(1+T)||g||'_{k,\gamma}\\
    ||\partial_yu||_{k,\gamma, T}\leq CT^{\frac{\gamma}{2}}||u||_{\gamma,T}.
\end{gather*}
It is not hard to show that for $\alpha+\beta+2j=k, t\geq 0$, $u, \partial_y(\partial^\alpha_x\partial^\beta_y\partial^j_t)u, \partial_{yy}(\partial^\alpha_x\partial^\beta_y\partial^j_t)u$ can be continuously extend to $y=-\infty$ by 0, so
\begin{gather*}
    ||u_1||'_{k,2+\gamma}=||u_1||_{k,2+\gamma}\leq 3C(1+T)||g||'_{k,\gamma}\\
    ||\partial_yu||'_{k,\gamma, T}\leq CT^{\frac{\gamma}{2}}||u||'_{\gamma,T}.
\end{gather*}
and $(\partial_t-L_M)u_1=g_1, u_1(0,\cdot)=0$ holds up to $y=-\infty$.

This leaves the inhomogeneous problem with $g_2$. We begin by writing
\begin{gather*}
    A_tg_2=\int_0^t\int_S K_{t-s}(x,y,x_1,y_1)g(s,x_1,0)\chi(y_1)dx_1dy_1ds\\
    =\int_0^t\left(\int_0^\infty p^d(t-s,x,x_1)g(s,x_1,0))dx_1\right)\left(\int_{-\infty}^\infty k_{t-s}^e(y,y_1)\chi(y_1)dy_1\right)ds\\
    =\int_0^t I_1(t,s,x)\cdot I_2(t,s,y)ds.
\end{gather*}
If $g(t,x,0)\in C^{k,\gamma}(\mathbb{R}_+\times [0,T]), \chi(y)\in C^{k,\gamma}(\mathbb{R}\times [0,T])$, then $\int_0^tI_1(t,s,x)ds\in  C^{k,2+\gamma}(\mathbb{R}_+\times [0,T])$, $\int_0^t I_2(t,s,y)ds\in C^{k,2+\gamma}(\mathbb{R}\times [0,T])$ are solutions to one-dimensional inhomogeneous problem respectively. So
\begin{align*}
    (\partial_t-L_M)A_tg_2=g(t,x,0)\chi(y)+\int_0^t(\partial_t-L_M)I_1(t,s,x)\cdot I_2(t,s,y)ds=g(t,x,0)\chi(y).
\end{align*}
and 
\begin{gather}\label{partial_y_estimate_1}
   |\partial_yA_tg_2|\leq \sqrt{T}||g_2||_\infty.
\end{gather}
In the following we first assume that $k=0$, $A_tg_2, \partial_xA_tg_2, x\partial^2_xA_tg_2$ can be continuously extended to  $y=-\infty$ since $\underset{y\to -\infty}{\lim}I_2(t,s,y)=\chi(-\infty)=1$. 
Next we verify the h\"older continuity condition. 
\begin{gather*}
    |x\partial_x^2A_tg_2-x'\partial_x^2A_tg_2|\leq \int_0^t |x\partial_x^2I_1(t,s,x)-x'\partial_x^2I_1(t,s,x)|\cdot I_2(t,s,y)ds\\
    \leq \int_0^t |x\partial_x^2I_1(t,s,x)-x'\partial_x^2I_1(t,s,x)|ds\leq C(1+T)|\sqrt{x}-\sqrt{x'}|^\gamma||g(t,x,0)||_{k,\gamma}.
\end{gather*}
Similarly 
\begin{gather}\label{partial_y_estimate_2}
    |\partial_yA_tg_2-\partial_y'A_tg_2|\leq \int_0^t |\partial_yI_2(t,s,y)-\partial_y'I_2(t,s,y)|\cdot I_1(t,s,y)ds\\
    \leq\int_0^t |\partial_yI_2(t,s,y)-\partial_y'I_2(t,s,y)|ds\leq CT^{\frac{\gamma}{2}}|y-y'|^\gamma||\chi||_{k,\gamma}.
\end{gather}
When $k>0$, for $\alpha+\beta+2j=k$, using the formula
\begin{gather*}
    (\partial^\alpha_x\partial^\beta_y\partial^j_t)A_tg_2=\int_0^t\int_{\mathbb{R}_+\times\mathbb{R}}p^{d+\alpha}_{t-s}(x,x_1)k^e_s(y,y_1)(\partial^\alpha_{x_1}\partial^\beta_{y_1}L^j_{d+\alpha,M})g_2(x_1,y_1)dx_1dy_1ds+\sum_{r=0}^{j-1}\partial^{j-r-1}_t\partial^\alpha_x\partial^\beta_yL^r_{d+\alpha,M}g_2,
\end{gather*}
so $A_tg_2, \partial_x(\partial^\alpha_x\partial^\beta_y\partial^j_t)A_tg_2, x\partial^2_x(\partial^\alpha_x\partial^\beta_y\partial^j_t)A_tg_2$ can be continuously extended to  $y=-\infty$ and h\"older continuity conditions hold.

Above all $A_tg_2\in C^{k,2+\gamma}(\mathbb{R}_+\times\mathbb{R}\times [0,T])$ is the solution to the inhomogeneous problem with $g_2$ such that
\begin{align*}
    ||A_tg_2||'_{k, 2+\gamma}\leq C(1+T)||g_2||'_{k,\gamma}\leq 3C(1+T)||g||'_{k,\gamma}.
\end{align*}
Let $u=u_1+A_tg_2$, then $u$ solves the inhomogeneous problem (\eqref{inhomo_problem}) and $||u||_{2+\gamma}\leq 6C(1+T)||g||_{k,\gamma}$. (\ref{tangent_y}) follows from (\ref{partial_y_estimate_1}), (\ref{partial_y_estimate_2}). Particularly along the edge $y=0$, $v(t,x,0)=\int_0^tI_1(t,s,x)ds$ and satisfies
\begin{gather*}
    (\partial_t-x\partial_x^2-d\partial_x)v(t,x,0)=g(x,0)\ \text{with}\  v(0,x,0)=0.
\end{gather*}
This concludes the proof for $S=C_{mix}$.
Taken together, we obtain the conclusion. The remaining case $S=E_{reg}, E_{\infty}$ shall be treated similarly.
\end{proof}

\section{Existence of Solution}\label{6} 
We now return to our principal goal, namely the analysis of $L$ defined on $P$. The estimates proved in the previous sections allow us to prove existence of a unique solution to the inhomogeneous problem 
\begin{gather}\label{25}
   (\partial_{t}-L)w=g\ in\ P\times[0,T]\\
   \text{with}\ w(0,x,y)=f.
\end{gather}

\begin{definition}
Let $\mathfrak{M}=\{(W_j,\phi_j): j=1,\cdot\cdot\cdot,K\}$ be a cover of $bP$ by normal coordinate charts, $W_0\subset\subset int\ P$, covering $P\backslash\cup_{j=1}^KW_j$ and let $\{\varphi_j: j=0,\cdot\cdot\cdot,K\}$ be a partition of unity subordinate to this cover. A function $f\in C^{k,\gamma}(P)$ provided $(\varphi_jf)\circ\phi_{j}\in \mathcal{C}^{k,\gamma}(W_j)$ for each j. We define a global norm on $\mathcal{C}^{k,\gamma}(P)$ by setting 
\begin{align*}
    ||f||_{k,\gamma}=\sum_{j=0}^{K}||(\varphi_{j}f)\circ\phi_{j}||_{k,\gamma}^{W_j}.
\end{align*}
\end{definition}

\begin{theorem}\label{heat_equation}
For $0<\gamma<1$, if the data $f\in C^{k,2+\gamma}(P),g\in C^{k,\gamma}(P\times[0,T])$, then equation (\ref{25}) has a unique solution $w\in C^{k,2+\gamma}(P\times[0,T])$.
\end{theorem}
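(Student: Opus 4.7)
The plan is to construct the solution by combining local solutions of the model problems furnished by Propositions \ref{cauchy} and \ref{inhomogeneous} through a parametrix/Neumann series argument, in the style of \cite{EM}. First I would reduce to the case $f=0$. Let $\{\varphi_j\}$ be a partition of unity subordinate to a cover $\mathfrak{M}=\{(W_j,\phi_j)\}$ of $P$ by normal coordinate charts (one interior chart and one for each boundary type), and let $K^{(j)}_t$ denote the model-operator heat kernel constructed in Section \ref{model_operator} on $W_j$. Setting
\[
w_0(t,p) := \sum_j \varphi_j(p)\bigl[K^{(j)}_t(\widetilde\varphi_j f)\bigr](p),
\]
where $\widetilde\varphi_j\equiv 1$ on $\mathrm{supp}\,\varphi_j$, Proposition \ref{cauchy} gives $w_0\in C^{k,2+\gamma}(P\times[0,T])$ with $w_0(0,\cdot)=f$. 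Then $\widetilde g := g-(\partial_t-L)w_0$ lies in $C^{k,\gamma}(P\times[0,T])$, because on each chart $(\partial_t-L_M^{(j)})K^{(j)}_t(\widetilde\varphi_j f)=0$ and the residual is the sum of commutator terms $[L,\varphi_j]$ applied to $K^{(j)}_t(\widetilde\varphi_j f)$ and perturbation terms $\varphi_j(L-L_M^{(j)})K^{(j)}_t(\widetilde\varphi_j f)$. Hence it suffices to solve the inhomogeneous problem with zero initial data.

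For $f=0$, I would build a right parametrix
\[
\mathcal{P}g := \sum_j \varphi_j\bigl[\mathcal{H}^{(j)}(\widetilde\varphi_j g)\bigr],
\]
where $\mathcal{H}^{(j)}$ is the solution operator from Proposition \ref{inhomogeneous} for the model problem in $W_j$. A direct computation yields $(\partial_t-L)\mathcal{P}g = g + \mathcal{E}g$, with error $\mathcal{E}g$ consisting of (i) commutator contributions $[L,\varphi_j]\mathcal{H}^{(j)}(\widetilde\varphi_j g)$, which only involve first- and zeroth-order tangential derivatives of $\mathcal{H}^{(j)}(\widetilde\varphi_j g)$, and (ii) perturbation contributions $\varphi_j(L-L_M^{(j)})\mathcal{H}^{(j)}(\widetilde\varphi_j g)$, whose coefficient differences $a(x,y)-a(0,0)$, etc., vanish at the chart center. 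Shrinking each chart radius to $R$ and restricting to time $T_0$, the perturbation coefficients are $O(R^{\gamma/2})$ in the singular metric while the commutators pick up an extra factor $T_0^{\gamma/2}$ from estimate (\ref{tangent_y}). Combining the two, $\|\mathcal{E}\|_{C^{k,\gamma}\to C^{k,\gamma}}\le \tfrac12$ once $R$ and $T_0$ are sufficiently small.

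With this contraction bound, $(\mathrm{Id}+\mathcal{E})^{-1}$ exists as a Neumann series on $[0,T_0]$, and $\widetilde w := \mathcal{P}(\mathrm{Id}+\mathcal{E})^{-1}\widetilde g \in C^{k,2+\gamma}(P\times[0,T_0])$ solves $(\partial_t-L)\widetilde w=\widetilde g$, $\widetilde w(0,\cdot)=0$; iterating on consecutive intervals of length $T_0$ extends the solution to $[0,T]$, and $w := w_0+\widetilde w$ is the required solution. Uniqueness follows from the weak maximum principle, which applies because Definition \ref{definition}(4) and Assumption \ref{assumption} force the drift at every boundary component to be either inward pointing or tangential, so the characteristic boundary carries no Dirichlet data and one has the standard estimate $\|w\|_\infty\le T\|g\|_\infty+\|f\|_\infty$.

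The main obstacle I expect is verifying the contraction bound on $\mathcal{E}$ uniformly across the five boundary types, particularly at the mixed corners where the singular metric combines the $\sqrt{x}$ scale with the $|\ln y|$ scale. One has to check that each coefficient difference, after being paired with the corresponding weighted derivative of $\mathcal{H}^{(j)}(\widetilde\varphi_j g)$, lands back in $C^{k,\gamma}$ with norm decaying in $R$, which requires tracking how much of the available $\sqrt{x}\partial_x$ or $D_y$ weight on $\mathcal{H}^{(j)}$ is consumed by the excess vanishing of the coefficient. A secondary technical issue is making the global norm in Definition 2.3 compatible across overlaps between charts of different boundary types (e.g.\ a mixed corner meeting a regular edge), which is handled by choosing subordinate cutoffs whose supports lie strictly in the interior of each chart so that the stronger singular metric dominates on the overlap.
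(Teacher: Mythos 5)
Your overall strategy—build a parametrix from the model-operator solution operators of Propositions \ref{cauchy} and \ref{inhomogeneous}, show the resulting error operator is small in $C^{k,\gamma}$, and invert $\mathrm{Id}+\mathcal{E}$ by a Neumann series—is the same as the paper's, and the reduction to $f=0$ and the appeal to the maximum principle for uniqueness are both reasonable. But the error analysis as you sketch it does not close, and several of your stated bounds do not match what the operator actually gives.

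First, the commutator pieces $[L,\varphi_j]\mathcal{H}^{(j)}(\widetilde\varphi_j g)$ do not pick up a factor $T_0^{\gamma/2}$ from \eqref{tangent_y}; in the paper's construction the cutoffs are chosen so that $\phi\equiv 1$ on $\mathrm{supp}\,\chi$, making the commutator kernel supported strictly off the diagonal, and that term is controlled via \cite{EM} Prop.\ 10.3.1 by $Ce^{-M/T}$. The estimate \eqref{tangent_y} is instead used for a very specific piece of the \emph{perturbation}: at infinity edges and mixed corners, $L-L_M$ contains a first-order tangential term $e(x,y)y\partial_y$ whose coefficient does \emph{not} vanish at the chart center and therefore cannot be made small by shrinking the chart. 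It is only the small-time gain \eqref{tangent_y} that controls it, and it comes with a penalty $\epsilon^{-2\gamma}$ from the cutoff Hölder seminorm. Second, the second-order part of $L-L_M$ is not $O(R^{\gamma/2})$: after pairing the coefficient vanishing against the blow-up $[\chi_\epsilon]_\gamma\sim\epsilon^{-\gamma}$ one gets $\epsilon^{2-2\gamma}$ at regular corners and only $\epsilon^{1-\gamma-\gamma'}$ at infinity edges, which forces the auxiliary exponent $\gamma'$ with $\gamma+\gamma'<1$. The construction therefore requires an anisotropic $\epsilon$-grid (charts of size $\epsilon^2\times\epsilon$ along Kimura edges), whose cardinality grows as $\epsilon\to 0$, followed by a two-step choice: fix $\epsilon$ to kill the $\epsilon^{2-2\gamma}$ and $\epsilon^{1-\gamma-\gamma'}$ parts, \emph{then} shrink $T_0$ to kill the $\epsilon^{-2\gamma}T_0^{\gamma/2}$ part. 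A single "shrink $R$ and $T_0$" sweep does not give a contraction.

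Third, for $k>0$ the error $\mathcal{E}$ is not a contraction on $C^{k,\gamma}$; the differentiated error contains terms bounded by $\alpha_k\|g\|_{k,\gamma}+\beta_k\|g\|_{k-1,\gamma}$ with $\alpha_k<1$ but $\beta_k$ not small, so one must use the Banach-ladder convergence lemma (Lemma \ref{banach}, from \cite{EM} Thm.\ 11.8.1) rather than a plain Neumann series. Finally, for uniqueness, the maximum principle at infinity edges is not immediate: there the drift is purely tangential and the operator is characteristic, so the paper has to run a barrier argument on the unbounded strip obtained via $z=-\ln x$ and use the sub/supersolution from \cite{FG}. Your statement that "the characteristic boundary carries no Dirichlet data" glosses over exactly this delicate case.
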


Uniqueness of the solution can be obtained from the following maximum principle.

\begin{proposition}\label{max prin}(Maximum Principle) 
Let $u\in C^{k,2+\gamma}(P)$ be a subsolution of $\partial_{t}u\leq Lu$ on $P\times[0,T]$, then 
\begin{align}
\underset{P\times[0,T]}{\max}u(t,x,y)=\underset{P}{\max}\ u(0,x,y).
\end{align}
\end{proposition}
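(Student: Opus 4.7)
The plan is to prove this maximum principle via the classical perturbation trick. For $\epsilon>0$, introduce $v_\epsilon(t,p)=u(t,p)-\epsilon t$, so that $\partial_t v_\epsilon-Lv_\epsilon\le-\epsilon$ on $P\times[0,T]$. Because $P\times[0,T]$ is compact and $v_\epsilon$ is continuous (which follows from $u\in C^{k,2+\gamma}(P)$), it attains its maximum at some point $(t_\star,p_\star)$. I would argue by contradiction that $t_\star=0$: if $t_\star>0$, then first-order optimality in $t$ gives $\partial_t v_\epsilon(t_\star,p_\star)\ge 0$, so it would suffice to show $Lv_\epsilon(t_\star,p_\star)\le 0$ to force $0\le -\epsilon$, a contradiction. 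Hence $t_\star=0$ and $\max_{P\times[0,T]}v_\epsilon=\max_{P}u(0,\cdot)$; letting $\epsilon\downarrow 0$ yields the stated conclusion.

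The substance of the proof is to verify $Lv_\epsilon(t_\star,p_\star)\le 0$ by a case analysis on the stratum of $P$ containing $p_\star$. If $p_\star\in\mathring P$, this is immediate from interior ellipticity together with the fact that the spatial Hessian of $v_\epsilon$ is negative semidefinite at a maximum. If $p_\star$ lies on a regular edge of type \eqref{2a}, I work in local adapted coordinates $(x,y)$ with $p_\star=(0,y_\star)$ and $y_\star$ interior to the edge: the tangential extremality gives $\partial_y v_\epsilon=0$ and $\partial_y^2 v_\epsilon\le 0$, while one-sided extremality in $x$ gives $\partial_x v_\epsilon(0,y_\star)\le 0$. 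Then $ax\partial_x^2 v_\epsilon$ and $bx\partial_{xy}v_\epsilon$ vanish at $x=0$, $c\partial_y^2 v_\epsilon\le 0$ because $c>0$, $e\partial_y v_\epsilon=0$, and $d\partial_x v_\epsilon\le 0$ because $d\ge 0$ by the inward-pointing condition (item~4 of Definition~\ref{definition}). For an infinity-edge point \eqref{2b}, every second-order coefficient and the $\partial_x$-drift coefficient carry a power of $x$ that vanishes at $p_\star$, so $Lv_\epsilon$ reduces to $c\partial_y^2 v_\epsilon+e\partial_y v_\epsilon\le 0$. At any of the three corner types \eqref{2c}, \eqref{2d}, \eqref{2e}, every second-order coefficient vanishes at the corner; the one-sided maxima give $\partial_x v_\epsilon,\partial_y v_\epsilon\le 0$ in the boundary variables; and each surviving drift term has the correct sign thanks to the inward-pointing or vertical-up conditions (for example, $d,e\ge 0$ at a regular corner, $e\ge 0$ at a mixed corner, and both drifts carry vanishing coefficients at an infinity corner).

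The main obstacle I anticipate is regularity-theoretic rather than conceptual: one must know that each term in $Lu$ has a well-defined continuous boundary trace so that the sign estimates above can be evaluated pointwise at $p_\star$. Since the norm on $C^{k,2+\gamma}(P)$ controls only the Kimura derivatives $\sqrt{x}\partial_x$ and $D_y$, each occurrence of $x\partial_x^2$, $y^2\partial_y^2$, $bxy\partial_{xy}$, etc.\ in the local forms must be re-expressed via identities such as $x\partial_x^2=(\sqrt{x}\partial_x)^2-\tfrac12\partial_x$ and $y^2\partial_y^2=D_y^2-D_y$, which rewrite the degenerate second-order operator in terms of derivatives that do extend continuously up to the boundary. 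A secondary subtlety is that at infinity edges and corners the stratum point lies at infinite distance in the Kimura metric even though it is at finite topological distance in $P$; the relevant topology for both the existence of the maximum and the pointwise evaluation of $Lu$ is the ambient manifold-with-corners topology, so this does not cause a genuine problem, but it requires one to be careful when quoting boundary behavior of derivatives from the H\"older space. Once these regularity issues are settled, the sign analysis above closes uniformly across all strata and the contradiction is obtained.
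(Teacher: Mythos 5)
Your interior and regular-edge analysis is essentially the standard argument (the paper disposes of those strata by citation), but the substance of this proposition lies at the infinity strata, and that is exactly where your proof has a gap. At an infinity edge point the local form is $L=ax^{2}\partial_{x}^{2}+bx\partial_{xy}+c\partial_{y}^{2}+dx\partial_{x}+e\partial_{y}$, and you discard $ax^{2}\partial_{x}^{2}v_\epsilon$ and $dx\partial_{x}v_\epsilon$ on the grounds that their coefficients carry powers of $x$ vanishing at $x=0$. This is not justified: near an infinity edge the norm on $C^{k,2+\gamma}$ controls only $x\partial_x u$ and $(x\partial_x)^2u$, so $x^{2}\partial_x^{2}u=(x\partial_x)^2u-x\partial_xu$ is merely \emph{bounded}, not $o(1)$, as $x\to 0$; and the H\"older seminorms in the normal direction are taken with respect to $|\ln x-\ln x'|$, under which the edge lies at infinite distance, so none of these quantities --- nor even the tangential derivatives $\partial_yv_\epsilon$, $\partial_y^2v_\epsilon$ that your first- and second-order optimality conditions require --- is guaranteed to admit a continuous trace on the edge. (Contrast with a regular edge, where $\partial_xu$ and $(\sqrt{x}\partial_x)^2u$ are H\"older with respect to $|\sqrt{x}-\sqrt{x'}|$, a metric in which the edge is at finite distance, and one really does obtain $x\partial_x^2u\to 0$.) The same objection applies at mixed and infinity corners. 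So the case in which the maximum point lies on $E_\infty$ (or $C_{mix}$, $C_\infty$) cannot be closed by pointwise evaluation of $Lv_\epsilon$, and dismissing this as a "secondary subtlety" skips the hard part of the proposition.

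The paper's proof is organized around precisely this obstruction. It reduces to showing $u\le 0$ on $E_\infty\times(0,T]$, performs the change of variables $z=-\ln x$, which turns $L$ into a uniformly elliptic operator $L'$ on the unbounded domain $[0,\infty)_z\times[0,1]$ with the infinity edge sent to $z=\infty$, and then runs a Phragm\'en--Lindel\"of-type argument: boundedness of $u$ supplies the growth bound $v\le Ae^{a(x^{2}+z^{2})}$, a barrier supersolution $U_\tau\ge \frac{C}{\tau-t}e^{G(x^{2}+z^{2})/4(\tau-t)}$ built from the fundamental solution is subtracted (together with the $\epsilon_2/(1+t)$ perturbation), the classical maximum principle is applied on large truncated cylinders $D_R$, and finally $\epsilon_1,\epsilon_2\to 0$. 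To repair your argument you would need either this barrier construction or a genuine proof that every term of $Lu$ extends continuously to the infinity strata with the claimed signs; the stated hypotheses on $u$ do not supply the latter.
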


\begin{proof}
We show that if $u(0,x,y)\leq 0$, then $u(t,x,y)\leq 0$ for $0<t\leq T$. The standard argument (see \cite{Fri}) shows that the maximum of $u$ on $P\times[0,T]$ occurs along the distinguished boundary $P\times\left\{ t=0\right\} \cup\left\{bP\times[0,T]\right\}$. The regularity hypotheses of $u$ show that the maximum cannot occur along regular edges $E_{reg}\times(0,T]$. Hence it is sufficient to show that $u(t,x,y)<0$ for $(0, T]\times E_{\infty}$.

For a point $q$ on infinity edge $E_{\infty}$, by definition, there exist local coordinates $(x,y)$ centered at $q$ and a neighborhood $\mathcal{U}=\{0\leq x <1,|y|<1\}$ such that $L$ has the form
\begin{align*}
    L= ax^{2}\partial^{2}_{x}+bx\partial_{xy}+c\partial^{2}_{y}+V.
\end{align*}
After a coordinate map $z=-\text{ln} x$, $L$ transforms to a uniform elliptic operator $L'$ on $D=[0,\infty)\times[0,1]$. Let $v(t,x,z)=u(t,x,e^{-z})$. Then $(\partial_{t}-L')v\leq 0$ on $D$. Because $u\in C^{0}(P\times [0,T])$, $v$ is bounded on $[0,\infty)\times[0,1]$, so there exists $A,a>0$ such that $v\leq Ae^{a(x^{2}+z^{2})}$.

We use the fundamental solution of the parabolic equation constructed in \cite{FG} and let the initial point to be $(0,0)$ to see that there exists a barrier function $U_{\tau}$ on $[0,\tau)_{t}\times \mathbb{R}^{2}$ satisfying 
\begin{align*}
    (\partial_{t}-L')U_{\tau}=0
\end{align*}
and there exists $C,G>0$ such that 
\begin{align*}
    U_{\tau}\geq \frac{C}{\tau-t}e^{\frac{G(x^{2}+z^{2})}{4(\tau-t)}}.
\end{align*}

For $\epsilon_{1}, \epsilon_{2}>0$, set
\begin{align*}
    v_{\tau}(t,x,z)= v(t,x,z)-\epsilon_{1}U_{\tau}(t,x,z)+\epsilon_{2}\frac{1}{1+t},
\end{align*}
then $v_{\tau}$ is a strict subsolution of $(\partial_{t}-L')v_{\tau}<0$. We choose $\tau$ so that $\frac{G}{4\tau}>a$. Let
\begin{align*}
    D_{R}= (0,\tau'] \times (D\cap bB_{R}(0)),
\end{align*}
$B_{R}(0)$ is the ball of radius of $R$ around the origin in $\mathbb{R}^{2}$ and $0<\tau'<\tau$. Then with $R$ sufficiently large, $v_{\tau}$ is negative on $D_{R}$. Let $\epsilon_{1}, \epsilon_{2}\to 0$, we see $v(t,x,z)\leq 0$ on the infinity, therefore $u(t,q)\leq 0$ for $0<t\leq \tau'$.
Therefore we proved that
\begin{align*}
    \underset{P\times[0,T]}{\max}u(t,x,y)\leq\underset{P}{\max}\ u(0,x,y).
\end{align*}
\end{proof}

\subsection{Parametrix construction}\label{6.1} 


Let $P$ denote the two-dimension compact manifold with corners, $C_{r}=\{C_{r,i}\}$ the set of regular corners, $C_{m}=\{C_{m,i}\}$ the set of mix corners, $C_{\infty}=\{C_{\infty,i}\}$ the set of infinity corners, $E_{r}=\{E_{r,i}\}$ the set of regular edges, $E_{\infty}=\{E_{\infty,i}\}$ the infinity edges, then
\begin{align*}
  bP=C_{r}\bigcup C_{m}\bigcup C_{\infty}\bigcup E_{r}\bigcup E_{\infty}. 
\end{align*}
First we fix a function $\varphi_{U}\in C^{\infty}(P)$  
that is equal to 1 in a neighborhood of $bP$. Let $U$ be a neighborhood of $bP$ such that $bU\cap intP$ is a smooth hypersurface in $P$, and $\bar{U}\subset\subset\varphi_{U}^{-1}(1)$. The subset $P_{U}=P\cap U^{c}$ is a smooth compact manifold with boundary and $L\mid_{P_{U}}$ is a non-degenerate elliptic operator.
\begin{align*}
    P=U\bigsqcup P_{U},\bar{U}\subset\subset\varphi_{U}^{-1}(1)
\end{align*}

We can double $P_{U}$ across its boundary to obtain $\widetilde{P_{U}}$, which is a compact manifold without boundary. And the operator $L$ can be extended to a classical elliptic operator $\widetilde{L}$ on $\widetilde{P_{U}}$. 

The classical theory of non-degenerate parabolic equations on compact manifolds without boundary, applies to construct an exact solution operator
\begin{align*}
    u_{i}=\widetilde{Q^{t}}[(1-\varphi_U)g]
\end{align*}
to the inhomogeneous equation:
\begin{gather*}
    (\partial_{t}-\tilde{L})u_{i}=(1-\varphi_U)g\ \text{in}\ \widetilde{P_{U}}\times[0,T]\\
    \text{with}\ u_{i}(0,p)=0,\ p\in\widetilde{P_{U}}.
\end{gather*}
This operator defines bounded maps from $C^{k,\gamma}(\widetilde{P_{U}}\times[0,T])\to C^{k,2+\gamma}(\widetilde{P_{U}}\times[0,T])$ for any $0<\gamma<1,k\in\mathbb{N}$.
We set interior parametrix
\begin{align}
    \widehat{Q_{i}^{t}}=\psi\widetilde{Q^{t}}[(1-\varphi_U)g]
\end{align}
where we choose $\psi\in C_{c}^{\infty}(P_{U})$ so that $\psi\equiv1$ on a neighborhood of the support of $(1-\varphi_U)$.

\begin{figure}[ht] 
\centering 
\includegraphics[width=1.1\textwidth]{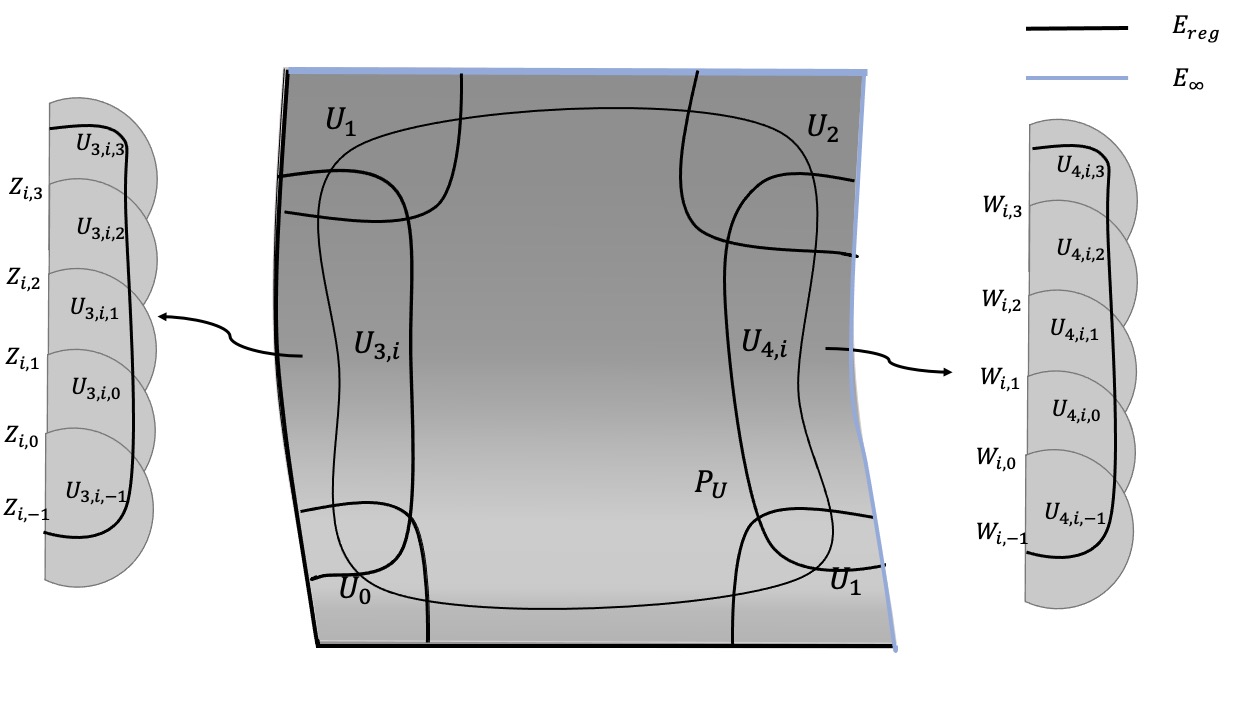} 
\caption{Covering in the proof} 
\label{Fig.cover} 
\end{figure}

To build the boundary parametrix, we construct the $\epsilon$-grid as follows. 
Let 
\[\mathfrak{U}=\{U_{0,i},U_{1,i}, U_{2,i}, U_{3,i}, U_{4,i}\}\] 
be an NCC covering of $bP$, where $U_{0,i},U_{1,i}$, $U_{2,i}$ are $5\epsilon$-neighborhoods of $i$-th regular corner, $i$-th mix corner, $i$-th infinity corner, respectively. By shrinking the neighborhoods we can assume that $U_{0,i},U_{1,i}, U_{2,i}$ are disjoint.

All of the charts in $U_{3,i}, U_{4,i}$ have coordinates lying in $\mathbb{R}_{+}\times\mathbb{R}$. We let $Z_{i,j}, W_{i,j}$ be the points in $U_{3,i},U_{4,i}$ with coordinates \[\{Z_{i,j}=(0,\epsilon j):j\in\mathbb{Z}\}, \{W_{i,j}=(0,\epsilon j):j\in\mathbb{Z}\}\]
respectively. And we let $U_{3,i,j},\ U_{4,i,j}$ to be the $5\epsilon$-neighborhood of $Z_{i,j},W_{i,j}$. To simplify notation we do not keep track of the dependence on $\epsilon$. We illustrate the covering in Fig \ref{Fig.cover}.

Let $(x,y)$ denote normal cubic coordinates in one of these neighborhoods, $U_{n,i,j}$ (if $n=0,1,2$, assume $j=0$). In these coordinates the operator $L$ takes the normal form $L_{i,j}$. We let $L_{n,i,j,M}$ be the corresponding model operator and let $A_{n,i,j}^{t}$ denote the solution operator for the model problem
\begin{align*}
  (\partial_{t}-L_{n,i,j,M})u=g,u(x,0)=0.  
\end{align*}

For $n=3,4$, let $\chi_{n}\equiv 1$ in $(0,2)\times(-2,2)$ and vanishing outside $(0,3)\times(-3,3)$, $\phi_{n}\equiv1$ in $(0,4)\times(-4,4)$ and vanishing outside $(0,5)\times(-5,5)$, and $\chi_n,\phi_n\in C_{c}^{\infty}(\mathbb{R}_{+}\times\mathbb{R})$, respectively. We define 
\[\widetilde{\chi_{n,i,j}}(x,y)=\chi_{n}(\frac{x}{\epsilon^2},\frac{y-\epsilon j}{\epsilon}), \phi_{n,i,j}(x,y)=\phi_{i}(\frac{x}{\epsilon^2},\frac{y-\epsilon j}{\epsilon}).\]
For $n=0,1,2$, we let
\begin{align*}
\widetilde{\chi_{n,i,j}}, \widetilde{\phi_{n,i,j}}=
\begin{cases}
\begin{array}{cc}
   \chi_{n}(\frac{x_1}{\epsilon^2},\frac{x_2}{\epsilon^2}), \phi_{n}(\frac{x_1}{\epsilon^2},\frac{x_2}{\epsilon^2})  & n=0 \\
   \chi_{n}(\frac{x}{\epsilon},\frac{y}{\epsilon^2}),\phi_{n}(\frac{x}{\epsilon},\frac{y}{\epsilon^2}) & n=1\\
   \chi_n(\frac{y_1}{\epsilon},\frac{y_2}{\epsilon}), \phi_n(\frac{y_1}{\epsilon},\frac{y_2}{\epsilon}) & n=2
\end{array}
\end{cases}
\end{align*}
where $\chi_{n}\equiv 1$ in $(0,2)\times(0,2)$ vanish outside $(0,3)\times(0,3)$, $\phi_{n}\equiv1$ in $(0,4)\times(0,4)$ vanish outside $(0,5)\times(0,5)$, and $\chi_{n},\phi_{n}\in C_{c}^{\infty}(\mathbb{R}^{2}_{+})$, respectively.

By abuse of notation, we still denote $\widetilde{\chi_{n,i,j}},\phi_{n,i,j}$ to indicate their pullback to $P$. It is clear that there exists $S>0$, independent of $\epsilon$, such that for $r\in P$,
\begin{align*}
    \chi(r)=\sum_{i=0}^{4}\underset{i,j}{\sum}\widetilde{\chi_{n,i,j}}(r)\leq S.
\end{align*}
It is clear that $\chi_{\epsilon}(r)\geq1$ for $r\in bP$. We can arrange to have $\varphi_{U}=1$ on the set $\chi_{\epsilon}\geq\frac{1}{2}$ and $Supp\ \varphi_{U}\subset\chi_{\epsilon}^{-1}([\frac{1}{16},S])$. Based on that, $\mathfrak{U'}=\{U_{n,i,j}\}$ together with $P_{U}$ is a partition of $P$.

To get a partition of unity of a neighborhood of $bP$ subordinate to $U_{i,j}$, we replace the functions $\{\widetilde{\chi_{n,i,j}}\}$ with
\begin{align*}
    \chi_{n,i,j}=\varphi_{U}\cdot\left(\frac{\widetilde{\chi_{n,i,j}}}{\sum_{n=0}^4\underset{i,j}{\sum}\widetilde{\chi_{n,i,j}}}\right).
\end{align*}
For each $\epsilon>0$, we define a boundary parametrix by setting 
\begin{align}\label{boundary parametrix}
    \widehat{Q_{b,\epsilon}^{t}}=\sum_{n=0}^{4}\underset{i,j}{\sum}\phi_{n,i,j}A_{n,i,j}^{t}\chi_{n,i,j}.
\end{align}
In total we set 
\begin{align}\label{30}
    \widehat{Q^{t}}g=\widehat{Q_{b,\epsilon}^{t}}g+\widehat{Q_{i}^{t}}g.
\end{align}

\subsection{Perturbation Estimate} 
Next we are going to analyze the perturbation term, 
\begin{gather*}
    (\partial_{t}-L)\widehat{Q_{b,\epsilon}^t}g =(\partial_t-L)\left(\sum_{n=0}^{4}\underset{i,j}{\sum}\phi_{n,i,j}A_{n,i,j}^{t}\chi_{n,i,j}g\right)\\
    =\left(\sum_{i=0}^{4}\underset{i,j}{\sum}\chi_{n,i,j}g\right)+\left(\sum_{i=0}^{4}\underset{i,j}{\sum}\phi_{n,i,j}(L_{n,i,j,M}-L)A_{n,i,j}^{t}[\chi_{n,i,j}g]\right)+\left(\sum_{i=0}^{4}\underset{i,j}{\sum}[\phi_{n,i,j},L]A_{n,i,j}^{t}[\chi_{n,i,j}g]\right)\\
    =\varphi_{U}g+E_{\epsilon}^{0}(g)+E_{\epsilon}^{1}(g)
\end{gather*}
where 
\begin{gather*}
    \varphi_{U}=\sum_{n=0}^{4}\underset{i,j}{\sum}\chi_{n,i,j},\\ E_{\epsilon}^{0}(g)=\sum_{n=0}^{4}\underset{i,j}{\sum}\phi_{n,i,j}(L_{n,i,j,M}-L)A_{n,i,j}^{t}[\chi_{n,i,j}g]\\
    E_{\epsilon}^{1}(g)=\sum_{n=0}^{4}\underset{i,j}{\sum}[\phi_{n,i,j},L]A_{n,i,j}^{t}[\chi_{n,i,j}g].
\end{gather*}
And 
\begin{gather*}
    (\partial_{t}-L)\widehat{Q_i^{t}}g=(1-\varphi_{U})g+[\psi,L]\widehat{Q_i^{t}}[(1-\varphi)g]=(1-\varphi_{U})g+E_i^\infty(g).
\end{gather*}
Taken together,
\begin{gather}\label{perturbation}
    (\partial_{t}-L)\widehat{Q^{t}}g =g+E_{\epsilon}^{0}(g)+E_{\epsilon}^{1}(g)+E_i^\infty(g).
\end{gather}

As $\phi\equiv 1$ on the support $\chi$, the norm of $E_\epsilon^1$ is bounded and by $Ce^{-\frac{M}{T}}$ as $T\to 0$ for some $M>0$ by \cite{EM} Proposition 10.3.1.

Then we estimate $E_\epsilon^0$.
\subsubsection{n=0} 
In $U_{0,i}$, under local adapted coordinates, the operator $L$ takes the form
\begin{gather*}
    L=x\partial_{xx}+y\partial_{yy}+xyb(x,y)\partial_{xy}+ d(x,y)\partial_{x}+e(x,y)\partial_{y}.\\
    E_{0,\epsilon}^0=\phi_\epsilon(L_M-L)A^t[\chi_\epsilon g]=-\phi_\epsilon(xyb(x,y)\partial_{xy}+\tilde{b}\partial_x+\tilde{e}\partial_y)A^t[\chi g].
\end{gather*}
Since $\phi_\epsilon$ is supported in the set where $x,y\leq 5\epsilon^2$, we have
\begin{align}\label{estimate1}
    ||E_{0,\epsilon}^0||_\infty\leq C\epsilon^2||\chi_\epsilon g||_{0,\gamma}\leq C\epsilon^{2-\gamma}||g||_{0,\gamma}.
\end{align}
We used 
\[[fgh]_\gamma\leq ||fg||_\infty[h]_\gamma+||fh||_\infty[g]_\gamma+||gh||_\infty[f]_\gamma\]
and $[\phi_\epsilon]_\gamma\leq C\epsilon^{-\gamma}$, so
\begin{align}\label{estimate2}
    [E_{0,\epsilon}^0]_{\gamma}\leq C\epsilon^{2-\gamma}||\chi_\epsilon g||_\gamma\leq C\epsilon^{2-2\gamma}||g||_{0,\gamma}
\end{align}
where the constant $C$ is independent of $\epsilon$. 
Combining (\ref{estimate1}) and (\ref{estimate2}), we conclude
\begin{align}\label{E^0}
    ||E_{0,\epsilon}^0||_{0,\gamma}\leq  C\epsilon^{2-2\gamma}||g||_{0,\gamma}.
\end{align}

\subsubsection{n=3}\label{n=3}
In a neighborhood $U_1$ of an infinity edge point, under local adapted coordinates, the operator $L$ takes the form 
\begin{align*}
L=x\partial_{xx}+xyb(x,y)\partial_{xy}+c(x,y)y^{2}\partial_{yy}+d(x,y)\partial_{x}+e(x,y)y\partial_{y}
\end{align*}
and the model operator:
\begin{align*}
L_{M}=x\partial_{xx}+y^{2}\partial_{yy}+d\partial_{x}+y\partial_{y}.
\end{align*}
We split 
\begin{gather}\label{error}
    L-L_M=L^r+e(x,y)y\partial_y,\\
    L^r=xyb(x,y)\partial_{xy}+\tilde{c}(x,y)y^2\partial_{yy}+\tilde{d}(x,y)\partial_x.
\end{gather}
There are two types of errors:
\begin{align*}
    E_{1,\epsilon}^{0,r}g=-\phi_\epsilon L^rA^t[\chi_\epsilon g],\ E_{1,\epsilon}^{0,t}g=-\phi_\epsilon e(x,y)y\partial_y A^t[\chi_\epsilon g].
\end{align*}
Then by Proposition (\ref{inhomogeneous}) and $[\chi_\epsilon]_\gamma=O(\epsilon^{-\gamma})$ and since
\begin{align*}
    |\chi_\epsilon(y)-\chi_\epsilon(y')|\leq C\epsilon^{-\gamma}|y-y'|^\gamma\leq C\epsilon^{-\gamma}|\text{ln} y-\text{ln}y'|^\gamma,
\end{align*}
we have
\begin{gather*}
    [E^{0,t}_{1,\epsilon}]_\gamma\leq C\epsilon^{-\gamma}T^{\frac{\gamma}{2}}||\chi_\epsilon g||_{0,\gamma}+CT^{\frac{\gamma}{2}}||\chi_\epsilon g||_{0,\gamma}\leq C\epsilon^{-2\gamma}T^{\frac{\gamma}{2}}||g||_{0,\gamma},\\
    ||E^{0,t}_{1,\epsilon}||_\infty\leq CT^{\frac{\gamma}{2}}||\chi_\epsilon g||_\infty\leq CT^{\frac{\gamma}{2}}\epsilon^{-\gamma}||g||_\infty.
\end{gather*}
Taken together
\begin{align}
    ||E^{0,t}_{1,\epsilon}||_{0,\gamma}\leq C\epsilon^{-2\gamma}T^{\frac{\gamma}{2}}||g||_{0,\gamma}.
\end{align}

Next we estimate the remaining term $E^{0,r}_{1,\epsilon}g$.
Since $\phi_\epsilon$ is supported on $x\leq 5\epsilon^2, y\leq 5\epsilon$, the vanishing properties of the coefficients of $L^r$ implies that the $L^\infty$-term is bounded by  
\begin{align*}
    ||E_{1,\epsilon}^{0,r}(g)||_\infty\leq C\epsilon||\chi_{\epsilon}g||_{0,\gamma}\leq C\epsilon^{1-\gamma}||g||_{0,\gamma}.
\end{align*}
To estimate the H\"older semi-norm we need to consider terms 
\begin{gather*}
    ||\phi_\epsilon xyb(x,y)||_\infty\left[\partial_{xy}A^t[\chi_\epsilon g]\right]_\gamma,
||\phi_\epsilon\tilde{c}(x,y)||_\infty y^2\partial_{yy}\left[A^t[\chi_\epsilon g]\right]_\gamma,\\ ||\phi_\epsilon\tilde{d}(x,y)||_\infty\partial_x\left[A^t[\chi_\epsilon g]\right]_\gamma
\end{gather*}
and 
\begin{gather*}
    \left[\phi_\epsilon xyb(x,y)\right]_\gamma||\partial_{xy}A^t[\chi_\epsilon g]||_\infty, \left[\phi_\epsilon\tilde{c}(x,y)\right]_\gamma||y^2\partial_{yy}A^t[\chi_\epsilon g]||_\infty,\\ \left[\phi_\epsilon\tilde{d}(x,y)\right]_\gamma||\partial_xA^t[\chi_\epsilon g]||_\infty.
\end{gather*}
The first three terms are bounded by 
\[C\epsilon||\chi_{\epsilon}g||_{0,\gamma}\leq C\epsilon^{1-\gamma}||g||_{0,\gamma}.\]
For any $0<\gamma'\leq\gamma<1$, the second three terms are bounded by 
\[C\epsilon^{1-\gamma}||\chi_\epsilon g||_{0,\gamma'}\leq C\epsilon^{1-\gamma-\gamma'}||g||_{0,\gamma}.\]
We therefore fix a $0<\gamma'\leq\gamma$ so that $\gamma+\gamma'<1$, then 
\begin{align}\label{Er}
    ||E^{0,r}_{1,\epsilon}g||_{0,\gamma}\leq C\epsilon^{1-\gamma-\gamma'}||g||_{0,\gamma}.
\end{align}

\subsubsection{n=1,2,4} 
These cases are estimated as above. We still split $L-L_M$ into two parts $L^r$ and the first tangential parts like (\ref{error}). The vanishing properties of coefficients of $L^r$ and the estimates of first tangential parts give the same results as
\begin{align*}
||E^{0,t}_{1,\epsilon}||_{0,\gamma}\leq C\epsilon^{-2\gamma}T^{\frac{\gamma}{2}}||g||_{0,\gamma},\\
    ||E^{0,r}_{1,\epsilon}g||_{0,\gamma}\leq C\epsilon^{1-\gamma-\gamma'}||g||_{0,\gamma}.
\end{align*}

\subsubsection{Proof of Theorem \ref{heat_equation}} 
\paragraph{Case when k=0.} We first prove the case when $k=0$. We write the solution $w=u+v$, where $v$ solves the homogeneous Cauchy problem with initial data $v(0,x,y)=f(x,y)$ and $u$ solves the inhomogeneous problem with $u(0,x,y)=0$. 

Recall the perturbation term (\ref{perturbation}), 
\begin{gather*}
    (\partial_{t}-L)\widehat{Q^t}g=g+E_{\epsilon}^0(g)+E_{\epsilon}^1(g)+E_i^\infty(g).
\end{gather*}
The support of the kernel of $E_i^\infty g$ has a positive distance from the diagonal and therefore this is a compact operator in the metric topology of $C^{0,\gamma}(P\times [0,T])$, 
tending to zero as $T\to0$. Using the estimations in the previous subsections,
\begin{gather*}
    ||E_{\epsilon}^0(g)+E_{\epsilon}^1(g)||_{0,\gamma, T}
    \leq C\left[\epsilon^{2-2\gamma}+\epsilon^{-2\gamma}T^{\frac{\gamma}{2}}+\epsilon^{1-\gamma-\gamma'}\right]\cdot||g||_{0,\gamma,T}.
\end{gather*}
Fix a $\delta>0$, we choose $\epsilon$ so that $\epsilon^{2-2\gamma}+\epsilon^{1-\gamma-\gamma'}=\delta$. If we choose $T_{0}$ sufficiently small, then the operator $E^{t}=E_{\epsilon}^{0,t}+E_{\epsilon}^{1,t}+E_{i}^\infty$ has norm strictly less than 1, and therefore the operator $Id+E^t$ is invertible as a map from $C^{k,\gamma}(P\times[0,T_{0}])$ to itself. Thus the operator 
\begin{align*}
    Q^{t}=\widehat{Q^{t}}\left(Id+E^{t}\right)^{-1}
\end{align*}
is the right inverse to $(\partial_{t}-L)$ up to time $T_{0}$ and is a bounded map
\begin{align*}
    Q^{t}:C^{0,\gamma}(P\times[0,T_{0}])\to C^{0,2+\gamma}(P\times[0,T_{0}]). 
\end{align*}

After constructing $\widehat{Q^t}$, the
solution operator for the inhomogeneous problem, we build a similar boundary parametrix for the
homogeneous Cauchy problem, which we then glue to the exact solution
operator for $P_U$. For each $n$ let $\widehat{Q_{n,i,j}^t}$ be the solution operator for the homogeneous Cauchy problem defined
by the model operator in $U_{i,j}$. And let $\widehat{Q^t_{00}}$ be the exact solution operator for the Cauchy problem $(\partial_t-L)u = 0$ on $W_0$ with Dirichlet data
on $bW_0\times [0,\infty)$.
For each $\epsilon>0$, we define a parametrix by setting 
\begin{align}
    \widehat{Q_0^{t}}=\sum_{i=0}^{4}\underset{i,j}{\sum}\phi_{n,i,j,\epsilon}\widehat{Q^t_{n,i,j}}\chi_{n,i,j,\epsilon}+\psi\widehat{Q^t_{00}}(1-\varphi_U).
\end{align}
Then
\begin{align*}
    (\partial_t-L)\widehat{Q^t_0}=E^tf: C^{0,2+\gamma}(P)\to  C^{0,\gamma}(P\times [0,T_0])
\end{align*}
is a bounded map and a slightly stronger statement is $\lim_{t\to 0}||E^t||=0$.

Finally we set $Q^t_0f=\widehat{Q^t_0}f-\widehat{Q}^tE^tf$, then 
\begin{align*}
    Q^t_0: C^{0,2+\gamma}(P)\to C^{0,2+\gamma}(P\times [0,T_0])
\end{align*}
is bounded and so is the solution operator of the Cauchy problem.


\paragraph{Higher order regularity.} We want to establish the convergence of $(Id+E^t)^{-1}$ in the operator norm defined by $C^{k,\gamma}(P\times[0,T_0])$. We follow the proof in \cite{EM} using the following lemma:
\begin{lemma}\label{banach}(\cite{EM}, Theorem 11.8.1)
Suppose that we have a ladder of Banach spaces 
\[X_0\supset X_1\supset X_2\supset  \cdot\cdot\cdot,\]
with norms $||\cdot||_k$, satisfying
$||x||_{k-1}\leq ||x||_k$ for all $x\in X_k$. Fix any $K\in\mathbb{N}$. Assume that $A$ is a linear map so that
$AX_k\subset X_k$ for every $k\in\mathbb{N}$, and that there are non-negative constants
$\{\alpha_j:j=0, 1,\cdot\cdot\cdot,K\}$ and $\{\beta_j:j=1,\cdot\cdot\cdot,K\}$, with
$\alpha_j<1$ for $0\leq j\leq K$,
for which we have the estimates:
\begin{gather*}
    ||Ax||_0\leq\alpha_0||x||_0\  \text{for}\ x\in X_0,\\
    ||A||_k\leq\alpha_k||x||_k+\beta_k||x||_{k-1}\ \text{for}\  x\in X_k.
\end{gather*}
In this case the Neumann series
\[(Id-A)^{-1}=\sum_{j=0}^\infty A^j\]
converges in the operator norm topology defined by $(X_k, ||\cdot||_k)$ for all $k\in
\{0,\cdot\cdot\cdot,K\}$.
\end{lemma}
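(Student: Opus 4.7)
The plan is to prove the lemma by induction on the level $k$. The base case $k=0$ is the classical Neumann series argument: the hypothesis $\|Ax\|_0 \leq \alpha_0 \|x\|_0$ with $\alpha_0 < 1$ makes $A$ a strict contraction of $X_0$, so the partial sums $S_N := \sum_{j=0}^N A^j$ converge in the operator norm on $X_0$ at geometric rate to $(Id - A)^{-1}$, which is the claim at level $0$.

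For the inductive step I would assume convergence in operator norm on $X_{k-1}$ has been established, so that the sequence
\[
c_j := \sup\{\|A^j x\|_{k-1}: x\in X_{k-1},\ \|x\|_{k-1} \leq 1\}
\]
is summable (and in particular $c_j \to 0$). Introduce the analogous quantity $b_j := \sup\{\|A^j x\|_k: x\in X_k,\ \|x\|_k \leq 1\}$. The monotonicity $\|y\|_{k-1} \leq \|y\|_k$ guarantees $\|x\|_k \leq 1 \Rightarrow \|x\|_{k-1} \leq 1$, so applying the two-level hypothesis to $A^{j-1}x$ yields the recursion
\[
b_j \leq \alpha_k\, b_{j-1} + \beta_k\, c_{j-1}, \qquad b_0 = 1.
\]
Unrolling the recursion gives $b_j \leq \alpha_k^j + \beta_k \sum_{i=0}^{j-1} \alpha_k^i c_{j-1-i}$, which exhibits $b_j$ as (a shift of) a convolution of a geometric sequence of ratio $\alpha_k < 1$ with the summable sequence $(c_j)$. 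A standard convolution estimate then produces
\[
\sum_{j=0}^{\infty} b_j \leq \frac{1}{1-\alpha_k}\Bigl(1 + \beta_k \sum_{j=0}^{\infty} c_j\Bigr) < \infty,
\]
so in particular $b_j \to 0$. Hence $S_N$ is a Cauchy sequence in $\mathcal{L}(X_k,X_k)$ and converges in operator norm to some $S_k \in \mathcal{L}(X_k,X_k)$. Passing to the limit in the telescoping identity $(Id - A) S_N = Id - A^{N+1}$ (and its right counterpart), using $\|A^{N+1}\|_{op,k} \leq b_{N+1} \to 0$, identifies $S_k$ with $(Id - A)^{-1}$ on $X_k$. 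Since the restriction of $S_k$ to $X_k$ must agree with the operator built at level $k-1$ on the dense subspace $X_k \subset X_{k-1}$, the various inverses are consistent.

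The main obstacle is really only a bookkeeping matter: one must keep track of norms at two different levels simultaneously and verify that summability at level $k-1$ propagates to summability at level $k$ through the coupled recursion. The structural point that makes this work is that the strict contraction factor $\alpha_k < 1$ at level $k$ supplies the geometric tail needed to tame the off-diagonal term $\beta_k c_{j-1}$ inherited from level $k-1$; if one only had $\alpha_k \leq 1$, the estimate would degrade to polynomial growth in $j$ and the Neumann series would diverge. Because the hypothesis places $\alpha_j < 1$ uniformly for $0 \leq j \leq K$, the induction closes up through the prescribed level $K$, which is exactly what is needed to upgrade the $k=0$ construction of the solution operator $Q^t$ to $C^{k,\gamma}$ data for all $k \leq K$.
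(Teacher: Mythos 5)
The paper does not actually give a proof of this lemma; it is cited verbatim from \cite{EM}, Theorem 11.8.1. So there is nothing to compare against in-text, and what follows is an assessment of your argument on its own terms.

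Your proof is correct in substance and is the standard argument one would expect: contraction at level $0$, then a coupled recursion on the operator norms of the powers $A^j$ propagating upward through the ladder using $\|A^j x\|_k\leq\alpha_k\|A^{j-1}x\|_k+\beta_k\|A^{j-1}x\|_{k-1}$, and summing the resulting convolution against the geometric sequence $\alpha_k^i$. The key structural point you identify — that $\alpha_k<1$ is what tames the $\beta_k c_{j-1}$ term — is exactly right.

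One phrasing issue worth fixing: you write that ``convergence in operator norm on $X_{k-1}$ has been established, so that the sequence $c_j$ is summable.'' Convergence of the Neumann series $\sum_j A^j$ in $\mathcal{L}(X_{k-1})$ does not, in general, imply $\sum_j \|A^j\|_{k-1}<\infty$. What your argument actually proves at each level is the stronger statement $\sum_j \|A^j\|_k<\infty$, and that stronger statement is what the induction must carry forward as the inductive hypothesis. Since your base case ($c_j\leq\alpha_0^j$) and your inductive step (the bound $\sum_j b_j\leq\frac{1}{1-\alpha_k}(1+\beta_k\sum_j c_j)$) both deliver absolute summability, the fix is purely cosmetic: state the induction as ``$\sum_j\|A^j\|_{k-1}<\infty$'' rather than ``the Neumann series converges.'' A second small remark: the final consistency observation invoking density of $X_k$ in $X_{k-1}$ is not supported by the hypotheses and is also not needed — consistency of the inverses follows simply because $Id-A$ is injective on $X_0\supset X_{k-1}\supset X_k$ and $S_{k-1}|_{X_k}$ therefore coincides with $S_k$ by uniqueness of the inverse. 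Neither point is a genuine gap; the argument closes once the inductive hypothesis is phrased as absolute summability.
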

It is convenient to induce the equivalent norms:
\begin{gather}
    ||g||_{k,\gamma,T}=||g||_{k-1,\gamma,T}+\underset{|\alpha|+|\beta|+2l=k}{sup}||\partial_t^l\partial_x^\alpha D_y^\beta g||_{0,\gamma,T},\\
    ||g||_{k,2+\gamma,T}=||g||_{k-1,2+\gamma,T}+\underset{|\alpha|+|\beta|+2l=k}{sup}||\partial_t^l\partial_x^\alpha D_y^\beta g||_{0,2+\gamma,T}.
\end{gather}
To apply Lemma \ref{banach}, it is sufficient to prove that there are $T_0>0$ and constants $\alpha_k<1, \beta_k$ such that
\begin{gather}
    ||E^tg||_{k,\gamma,T_0}\leq\alpha_k||g||_{k,\gamma,T_0}+\beta_k||g||_{k-1,\gamma,T_0}.
\end{gather}
The new terms in going from $k-1$ to $k$ are 
\[||\partial_t^l\partial_x^\alpha D_y^\beta E^tg||_{0,\gamma,T}\]
where $2l+|\alpha|+|\beta|=k$, which are of the terms of the types:
\begin{gather}
    \label{I}
    ||[\phi,L]A^{b+\alpha,t}[L_{b+\alpha}^m\partial_{x'}^\alpha\partial_{y'}^\beta]\chi_\epsilon g||_{0,\gamma,T},\  ||\phi e(x,y)D_yA^{b+\alpha,t}[L_{b+\alpha}^m\partial_{x'}^\alpha\partial_{y'}^\beta]\chi_\epsilon g||_{0,\gamma,T}\\
    \label{II}
    ||\partial_t^m\partial_x^\alpha D_y^\beta\left(\phi L^rA^{b,t}[\chi_\epsilon g]\right)||_{0,\gamma,T}
\end{gather}
based on the fact that
\begin{gather*}
    \partial_t^l\partial_x^\alpha D_y^\beta A_t^bg=A_t^{b+\alpha}[L_{b+\alpha}^l\partial_{x'}^\alpha D_{y'}^\beta g]+\sum_{q=0}^{l-1}L_{b+\alpha}^q\partial_t^{l-q-1}D_{y'}^\beta.
\end{gather*}
As analyzed in \ref{n=3}, the contribution of the terms in \eqref{I} to $||\cdot||_{k,\gamma,T}$ are bounded by 
\begin{gather*}
    C(T,k,\gamma)\epsilon^{-\mu(k,\gamma)}||g||_{k,\gamma,T},
\end{gather*}
for some $\mu(k,\gamma)>0$ and $C(T,k,\gamma)\to 0$ as $T\to 0$.
The contribution of the term \eqref{II} to $||\cdot||_{k,\gamma,T}$ is bounded by  
\begin{gather}
     C(k,\gamma)\epsilon^{1-\gamma-\gamma'}||g||_{k,\gamma,T}+C(T,k,\gamma)\epsilon^{-\mu(k,\gamma)}||g||_{k,\gamma,T}.
\end{gather}
Therefore we can choose suitable $\epsilon$ and sufficient small $T_0>0$ so that $\alpha_k<1$. This completes the proof.

We have proved the existence of solutions to the inhomogeneous problem and Cauchy problem in $[0,T_0]$, where $T_0$ does not depend on the initial data. To show the solution exists for all $t>0$, we can apply this proof again with initial data $f(0,\cdot)=v( T_0,\cdot),   g(t,\cdot)=g(t+T_0,\cdot)$. This extends the solution to $[0,2T_0]$. We can repeat this process $k$ times until $kT_0\geq t$.

\section{The Heat Kernel}\label{7} 
Now we are prepared to construct the heat kernel of the solution operator:
\begin{align}\label{kernel}
    (\partial_{t}-L)v_{f}=0\  \text{with}\  v_{f}(0,\cdot,\cdot)=f. 
\end{align}
We want to show the following result:
\begin{theorem}\label{heat kernel}
The global heat kernel $H_{t}(d_{1},d_{2},l_{1},l_{2})\in C^{\infty}(P^{reg}\times \mathring{P}\times(0,\infty))$ of the full operator $L$ exists and for $f\in C^0(P)$, then 
\begin{align*}
\int_P H_t(d_1,d_2,l_1,l_2)f(l_1,l_2)dl_1dl_2
\end{align*}
is the solution of $(\partial_{t}-L)v_f=0$ with $v_{f}(0,\cdot,\cdot)=f$. 
\end{theorem}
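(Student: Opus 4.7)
The plan is to realize $H_t$ as the Schwartz kernel of the Cauchy solution operator $T_t$ produced by Theorem~\ref{heat_equation}, and to extract smoothness in each of the three arguments from the explicit smoothness of the local model kernels of Section~\ref{model_operator} together with the parametrix construction of Section~\ref{6.1}. First, with $g \equiv 0$, Theorem~\ref{heat_equation} produces a unique $v_f \in C^{k,2+\gamma}(P \times [0,T])$ for every $f \in C^{k,2+\gamma}(P)$, and Proposition~\ref{max prin} gives $\|v_f(t,\cdot)\|_\infty \le \|f\|_\infty$. Hence $T_t: f \mapsto v_f(t,\cdot)$ extends by density to a contractive semigroup on $C^0(P)$. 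For each fixed $(t,d) \in (0,\infty) \times P$, the map $f \mapsto (T_tf)(d)$ is a bounded linear functional on $C^0(P)$, represented by a finite Borel measure $\mu_{t,d}$ on $P$ via Riesz--Markov.

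Next I would show that when $d \in P^{reg}$ the measure $\mu_{t,d}$ has a smooth density $H_t(d,\cdot)$ in $l$ by unpacking the parametrix $\widehat{Q_0^t} = \sum_{n,i,j} \phi_{n,i,j}\, \widehat{Q^t_{n,i,j}}\, \chi_{n,i,j} + \psi\, \widehat{Q^t_{00}}(1-\varphi_U)$ of Section~\ref{6.1}. Each $\widehat{Q^t_{n,i,j}}$ has an explicit kernel from Section~\ref{model_operator} built as a product of one-dimensional Kimura kernels $p_t^d$ and of (logarithmic) Gaussian kernels; the representation $p_t^d(x,x') = (x'/t)^d e^{-(x+x')/t}\, \psi_d(xx'/t^2)/x'$ shows that each such factor is $C^\infty$ jointly in $(x,x') \in \mathbb{R}_+ \times (0,\infty)$ when $d \ge 0$, while the interior operator $\widehat{Q^t_{00}}$ is smooth by classical parabolic theory. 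The exact semigroup differs from $\widehat{Q_0^t}$ by the Neumann series $(\mathrm{Id}+E^t)^{-1}$, which Section~\ref{6.1} shows converges in every $C^{k,\gamma}$-norm on a uniform interval $[0,T_0]$; each iterate of $E^t$ is a composition of the same model operators with smooth compactly supported cutoffs, so joint smoothness in $(d,l)$ is preserved under the composition. For $t > T_0$ I would iterate via the semigroup law $H_{t+s}(d,l) = \int_P H_t(d,p)\, H_s(p,l)\, dp$, using one factor to supply smoothness in $d$ and the other to supply smoothness in $l$.

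The Cauchy identity $v_f(t,d) = \int_P H_t(d,l) f(l)\, dl$ for $f \in C^0(P)$ is then automatic: it holds on the dense subspace $C^{k,2+\gamma}(P)$ by construction of $T_t$, both sides are continuous in $f$ under the sup-norm by contractivity, and strong continuity at $t=0$ reduces to the same dense subspace on which Theorem~\ref{heat_equation} already gives continuity up to $t=0$. Smoothness in $t$ then follows from the heat equation itself, since repeatedly applying $\partial_t H_t = L H_t$ bootstraps spatial regularity of $H_t$ to $t$-regularity on $(0,\infty)$.

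The main obstacle is the smoothness of $H_t$ in the \emph{source} variable $d$ when $d$ lies on a regular edge or regular corner inside $P^{reg}$. Riesz--Markov immediately yields a measure in the $l$-slot but says nothing about dependence on $d$, and classical interior parabolic regularity only handles $d \in \mathring{P}$. Smoothness up to the Kimura boundary $x=0$ must instead be read off from the fact that $\psi_d$ is smooth down to $x=0$ for $d \ge 0$, and one has to verify that the $\epsilon^{1-\gamma-\gamma'}$ and $T^{\gamma/2}$ gains from Section~\ref{6.1} propagate to every $k \in \mathbb{N}$ while keeping the perturbed kernel smooth pointwise in $(d,l)$. This careful bookkeeping of derivatives through both the model kernels of Section~\ref{model_operator} and the Neumann series of Section~\ref{6.1} is the technical heart of the proof.
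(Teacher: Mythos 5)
Your high-level outline matches the paper's in some respects (solution semigroup from Theorem~\ref{heat_equation}, local model kernels, partition of unity, bootstrap in $t$), and you correctly flag that the source-variable regularity at regular boundary points is the crux. But there are two genuine gaps, one of which is fatal to the argument as written.

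The central gap is the passage from the parametrix to an actual smooth kernel. You propose to read off the kernel from $\widehat{Q_0^t}\,(\mathrm{Id}+E^t)^{-1}$ and assert that ``each iterate of $E^t$ is a composition of the same model operators with smooth compactly supported cutoffs, so joint smoothness in $(d,l)$ is preserved under the composition.'' This does not follow. The Neumann series $(\mathrm{Id}+E^t)^{-1}=\sum_j(-E^t)^j$ converges only in the $C^{k,\gamma}$ \emph{operator} norm on $[0,T_0]$, and operator-norm convergence gives no pointwise control on the associated Schwartz kernels: there is no mechanism there by which summing infinitely many iterates produces a $C^\infty$ density, and each $E^t$ in fact smooths by only a fixed H\"older amount (powers of $\epsilon$ and $T^{\gamma/2}$), not by arbitrary orders. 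The paper avoids this entirely. It never extracts a kernel from $(\mathrm{Id}+E^t)^{-1}$; instead it constructs the local heat kernel $q_t^{ij}(\cdot,q)$ \emph{directly} by the iteration of Proposition~\ref{iteration}: $\mathbf{B}=(\widetilde L-L_{q,M})\mathbf{K}_t$ gains $\sqrt{t}$ and a half H\"older exponent at each step, so after finitely many iterations $\mathbf{B}^5\delta$ is H\"older and can be absorbed by Theorem~\ref{heat_equation}, yielding the explicit expansion $q_t=K_t+\sum_{i=1}^4\mathbf{K}_t\mathbf{B}^i\delta+Q^t\mathbf{B}^5\delta$. The global kernel is then $H_t=q_t-A^te_t$, where the patching error $e_t$ is supported strictly off-diagonal (so $e_t=O(e^{-c/t})$ by Proposition~\ref{alpha}) and $A^te_t$ is a genuine Cauchy-problem solution with smooth, rapidly-vanishing data. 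Your Riesz--Markov step only repackages the question of existence of a density; it does not supply the finite-order kernel series that is actually doing the work.

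The second gap is the smoothness in the source variable $d$ at regular boundary points. Your plan is bookkeeping of derivatives through the model kernels and the Neumann series, but (beyond the issue above) this does not address what happens near infinity edges and corners, where $L$ is not a Kimura operator and the bootstrap argument for $w\in C^\infty$ does not directly apply. The paper's proof introduces a distinct device here: fix $q\in\mathring P$, replace $L$ by $\tilde L=\chi L+(1-\chi)K$ with $K$ a Kimura operator agreeing with $L$ near $q$, so that $\tilde L$ is genuinely Kimura and the bootstrap/semigroup argument yields $\tilde H_t\in C^\infty$; then $(\partial_t-L)(H_t-\tilde H_t)(\cdot,q)=(L-\tilde L)\tilde H_t(\cdot,q)$ has smooth right-hand side supported away from $q$, whence $H_t-\tilde H_t\in C^\infty$ by Theorem~\ref{heat_equation}. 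Without this comparison (or some substitute for it) your argument does not reach $C^\infty$ in $d$ on $P^{\mathrm{reg}}$.
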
 

When $(d_1, d_2)\in\mathring{P}$, the existence of the heat kernel follows from a standard construction for elliptic operators and the theory developed in the preceding sections. We may indeed construct a heat kernel $\tilde H_t(d_1,d_2,l_1,l_2)$ for $(l_1,l_2)$ in an open neighborhood of $(d_1,d_2)$ and obtain by standard elliptic regularity that $H_t-\tilde H_t$ solves \eqref{25} with a smooth right-hand side. A nontrivial aspect of the above result is that we also construct a heat kernel when $(d_1, d_2)$ is on the regular part of $bP$, where elliptic regularity is not applied. Instead we use fundamental solutions of model operators and use a series expansion to construct and analyze the heat kernel.


\subsection{Overview of main ideas}
We continue working under the similar $\epsilon-$grid covering constructed in Section \ref{6.1}. Instead of requiring $L$ having  normal forms in each $U_{n,i,j}$, we require $L$ takes the form in (\ref{definition}) but without mixed second-order derivative term. This can be achieved by taking $\epsilon$ small enough and through coordinate change.

To construct the global heat kernel of the full operator $L$, we first construct the local heat kernel $\{q_{n,i}^{t}\}$ in $U_{n,i}$, which satisfies that for $\forall q\in\mathring{U}_{n,i}$,
\begin{align}\label{31}
    (\partial_{t}-L)q_{n,i}^{t}(\cdot,q)=0,\quad  \lim_{t\to 0}\ q_{ij}^{t}(\cdot,q)=\delta_q(\cdot).
\end{align}
Then we patch them together to construct the global kernel parametrix.

\paragraph{Construction of Local heat kernel:}
Fix $q$ in the interior of $U_{n,i}$, we fix a smooth compactly supported function $h(x,y)\in C_{c}^{\infty}(U_{n,i})$ and $h(x,y)\equiv 1$ on Supp $\phi_{n,i}$. Letting 
\begin{align}\label{35}
    \widetilde{L}=L_{q,M}+h(x,y)(L-L_{q,M})
\end{align}
where $L_{q,M}$ is the model operator we choose corresponding to $q$.
Then $\widetilde{L}$ equals to $L$ on Supp $\phi_{i,j}$, so the problem transform to construct the Green function $q_{t}$ of the solution operator in this neighborhood:
\begin{align}\label{local L}
    (\partial_{t}-\widetilde{L})q_{t}(\cdot,q)=0\ \  \text{with}\ \  \underset{t\to0^{+}}\lim q_{t}(q,\cdot)=\delta(q).
\end{align}
The idea is to approximate $q_{t}(\cdot,q)$ by the kernel of the model operator $L_{M}$ modeled at $q$ and control the perturbation term to be sufficiently smooth. 

Let $K_t$ be the fundamental solution of $L_{q,M}$, $\textbf{K}_t$ be the solution operator of $L_{q,M}$, $\textbf{B}_t=(\widetilde{L}-L_{q,M})\textbf{K}_t$, an observation is that for $N\geq 1$,
\begin{align}\label{iteration1}
    (\partial_{t}-\widetilde{L})\left(q_{t}(\cdot,q)-K_{t}(t,\cdot,q)-\sum_{i=1}^{N-1}\textbf{K}_{\textbf{t}}^{i}\textbf{B}\delta\right) = \textbf{B}^{N}\delta
\end{align}
where $\textbf{B}\delta=(\widetilde{L}-L_M)K_t$.


\subsection{Existence of Local Heat Kernel} 
\subsubsection{Case {$q\in U_0$}.}
We first consider the case when $q$ is in a neighborhood $U_0$ of a regular corner. Under the local adapted coordinates (\ref{c_reg}), $L$ takes the form
\begin{align*}
L=a(x_1,x_2)x_1\partial_{x_1}^2+b(x_1,x_2)x_2\partial_{x_2}^2+d(x_1,x_2)\partial_{x_1}+e(x_1,x_2)\partial_{x_2}.
\end{align*}
Fix a point $q=(x_2,y_2)$ in the interior of $U_0$. We introduce the model operator $L_{q,M}$ in $S_{c\_reg}$:
\begin{align}\label{c_reg1bis}
    L_{q,M}=a(q)x_{1}\partial_{x_1}^2+b(q)x_{2}\partial_{x_{2}}^2+d(q)\partial_{x_{1}}+e(q)\partial_{x_{2}}.
\end{align}
Then the kernel formula $K_t(x,y,x_1,y_1)$ is the product of two one-dimensional kernel formula:
\begin{align}\label{ppp}
    K_t(x,y,x_2,y_2)=p^{d(q)/a(q)}_{a(q)t}(x,x_2)p^{e(q)/b(q)}_{b(q)t}(y,y_2).
\end{align}

\begin{proposition}\label{iteration}
Denote  $d(t,x,y)=\frac{(\sqrt{x}-\sqrt{x_2})^2}{2a(q)t}+\frac{(\sqrt{y}-\sqrt{y_2})^2}{2b(q)t}$.
Assume that for $(x,y)\in\mathbb{R}_+^2$, $j\leq\frac{3}{2}$
\begin{align*}
     |g(t,x,y)|\leq\frac{1}{t^j}e^{-d(t,x,y)}
\end{align*}
and for some $0<\gamma<1$,
\begin{align}\label{g_holder}
    |g(t,x,y)-g(t,x',y)|\leq\frac{1}{t^{j+\frac{\gamma}{2}}}|\sqrt{x}-\sqrt{x'}|^{\gamma}\left(e^{-d(t,x,y)}+e^{-d(t,x',y)}\right)\\
    |g(t,x,y)-g(t,x,y')|\leq\frac{1}{t^{j+\frac{\gamma}{2}}}|\sqrt{y}-\sqrt{y'}|^{\gamma}\left(e^{-d(t,x,y)}+e^{-d(t,x',y)}\right).
\end{align}
Then there exists a constant $C>0$ such that
\begin{align*}
     |\textbf{B}g(t,x,y)|\leq C\frac{\sqrt{t}}{t^j}e^{-\frac{d(t,x,y)}{2}}
\end{align*}
and 
\begin{align*}
    |\textbf{B}g(t,x,y)-\textbf{B}g(t,x',y)|\leq\frac{C}{t^{j+\frac{\gamma-1}{2}}}|\sqrt{x}-\sqrt{x'}|^{\gamma}\left(e^{-\frac{d(t,x,y)}{2}}+e^{-\frac{d(t,x',y)}{2}}\right)\\
    |\textbf{B}g(t,x,y)-\textbf{B}g(t,x,y')|\leq\frac{C}{t^{j+\frac{\gamma-1}{2}}}|\sqrt{y}-\sqrt{y'}|^{\gamma}\left(e^{-\frac{d(t,x,y)}{2}}+e^{-\frac{d(t,x',y)}{2}}\right).
\end{align*}
\end{proposition}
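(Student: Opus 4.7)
The plan is to write out the convolution
\[\mathbf{B}g(t,x,y) = \int_0^t\!\!\int_{\mathbb{R}_+^2}\bigl[(\widetilde L - L_{q,M})_{(x,y)} K_{t-s}(x,y,x',y')\bigr]\,g(s,x',y')\,dx'\,dy'\,ds,\]
and exploit two structural features. First, $K_t$ factors as $p^{d(q)/a(q)}_{a(q)t}(x,x_2)\cdot p^{e(q)/b(q)}_{b(q)t}(y,y_2)$, so the sharp one-dimensional Kimura estimates of \cite{EM, CW19} apply termwise: each undifferentiated kernel is bounded by $t^{-1}e^{-c(\sqrt x-\sqrt{x_2})^2/t}$, while first and second $x$-derivatives cost $(t)^{-1/2}$ and $(t)^{-1}$ respectively (and similarly in $y$). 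Second, because $\widetilde L$ and $L_{q,M}$ agree at $q$, every coefficient of $\widetilde L - L_{q,M}$ vanishes there, so each is $O(|x-x_2|+|y-y_2|)$ locally and globally bounded thanks to the cutoff $h$ in \eqref{35}.

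For the sup bound, I would apply the elementary inequality
\[|\sqrt x-\sqrt{x_2}|^k\,e^{-c(\sqrt x-\sqrt{x_2})^2/\tau}\leq C_k\,\tau^{k/2}\,e^{-c'(\sqrt x-\sqrt{x_2})^2/\tau}\]
and its $y$-analogue to convert the vanishing coefficients of $\widetilde L-L_{q,M}$ into an extra $\sqrt{t-s}$ gain, at the price of a slightly smaller Gaussian constant. The resulting product Gaussian is then convolved with $g$ using the Chapman--Kolmogorov-type bound
\[\int p^{\cdot}_{t-s}(x,x')\,e^{-c(\sqrt{x'}-\sqrt{x_2})^2/s}\,dx'\lesssim e^{-c'(\sqrt x-\sqrt{x_2})^2/t},\]
which follows from the semigroup property of the one-dimensional Kimura kernel. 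This yields an integrand of size $(t-s)^{-1/2}s^{-j}\,e^{-d(t,x,y)/2}$; splitting $[0,t]$ into $[0,t/2]$ and $[t/2,t]$ and computing the $s$-integral produces the claimed $\sqrt{t}\,t^{-j}$ prefactor.

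For the H\"older estimates I would use the standard two-regime split. When $|\sqrt x-\sqrt{x'}|^2\geq t-s$, I bound each term separately by the sup estimate and absorb the factor $|\sqrt x-\sqrt{x'}|^\gamma$ by trading $(t-s)^{\gamma/2}$; when $|\sqrt x-\sqrt{x'}|^2 < t-s$, I apply the fundamental theorem of calculus along the segment $\sqrt x \to \sqrt{x'}$ to put one extra $\sqrt x\,\partial_x$ on the kernel, which costs $(t-s)^{-1/2}$ and gains $|\sqrt x-\sqrt{x'}|$. Combining with the H\"older hypothesis on $g$, both regimes produce integrands of size $(t-s)^{-(1+\gamma)/2}\,s^{-j-\gamma/2}$, and the $s$-integral gives the exponent $j+(\gamma-1)/2$ claimed. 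The $y$-direction is symmetric. The main technical obstacle is verifying that the Gaussian constant $c$ remains strictly positive through the repeated halvings $d\mapsto d/2$ required by successive estimates, and that the one-dimensional Kimura bounds hold uniformly up to $x=0$; the $s$-integrability at the endpoints $s=0,t$ holds precisely under the assumption $j\leq\tfrac{3}{2}$, which explains the appearance of that bound in the hypothesis.
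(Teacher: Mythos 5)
Your plan correctly identifies the two structural ingredients (product structure of $K_t$ and the first-order vanishing of the coefficients of $\widetilde L-L_{q,M}$), but there is a genuine gap in how you convert the vanishing coefficient into a time gain, and that gap is precisely what makes the paper's decomposition necessary.

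You propose to absorb the coefficient, which is $O(|x-x_2|+|y-y_2|)$, into the kernel Gaussian via the inequality $|\sqrt{x}-\sqrt{x_2}|^{k}e^{-c(\sqrt{x}-\sqrt{x_2})^{2}/\tau}\leq C\tau^{k/2}$, gaining $\sqrt{t-s}$. But the Gaussian in $K_{t-s}(x,y,x_1,y_1)$ is centered at the \emph{integration variable} $x_1$, not at the Taylor base point $x_2$. The factor $|x-x_2|$ is unrelated to $|\sqrt{x}-\sqrt{x_1}|$, so this inequality does not apply at the kernel level. You can only absorb $|x-x_2|$ \emph{after} convolving in $x_1$ (when the Gaussian recenters at $x_2$), and then the gain is $\sqrt{t}$, not $\sqrt{t-s}$. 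Counting powers with the correct $\sqrt{t}$ gain, the $s$-integrand carries a factor $(t-s)^{-1}$ rather than your claimed $(t-s)^{-1/2}$, and the integral diverges logarithmically at $s=t$. This is exactly the obstruction the paper flags (``the $t$-regularity of $(\widetilde L-L_M)K_{t-s}$ is $(t-s)^{-2}$ which is not integrable in $s$''), and it is resolved by the cancellation identity $\int_0^\infty x\partial_x^2 p^d_t(x,y)\,dy=0$: for $s\in[t/2,t]$ one subtracts $g(s,x,y_1)$ inside the integral, which is free because the kernel annihilates $x_1$-constants, and the H\"older hypothesis \eqref{g_holder} then supplies an extra factor $|\sqrt{x}-\sqrt{x_1}|^\gamma$ that kills the $(t-s)^{-1}$ nonintegrability. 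Your split $[0,t/2]\cup[t/2,t]$ is the right geometry, but without the subtraction the estimate does not close.

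For the H\"older bound, the fundamental-theorem-of-calculus argument you describe for $|\sqrt{x}-\sqrt{x'}|<\sqrt{t}$ is the right idea in the classical heat-kernel setting, but the degenerate Kimura kernel near $x=0$ requires a finer case split. In the paper the regime $|\sqrt{x}-\sqrt{x'}|<\sqrt{t}$ is further divided into $x'<x/3$ (where one works with $x\partial_x^2\mathbf{K}_t g$ directly and a modified derivative estimate carrying an extra factor $x^{\gamma/2}$) and $\tfrac{x}{3}<x'<x$ (where $\mathbf{B}g(t,x,y)-\mathbf{B}g(t,x',y)$ is decomposed into four pieces $I_1,\dots,I_4$ over an interval $J=[\alpha,\beta]$ around $x,x'$ and its complement, with an integration-by-parts using $L^*_{d,x_1}$ on $I_3$ and existing lemmas from \cite{EM} for $I_1,I_2$). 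The segment-integration argument alone does not reach the $x'<x/3$ regime or reproduce the required $x^{\gamma/2}$ weight. You would need to supply these two pieces to make the H\"older estimate rigorous.
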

\begin{proof}
By replacing $a(q)t, b(q)t$ with $t$ in the kernel formula
(\ref{ppp}), we might as well assume that $a(q)=b(q)=1$. Through a Taylor expansion of the coefficient of $L$ at $q$, we see
\begin{align}\label{taylor}
    L-L_{q,M}=\Theta((x-x_2)+(y-y_2))(L_x+L_y)
\end{align}
where $L_x=x\partial_x^2+d(q)\partial_x, L_y=y\partial_y^2+e(q)\partial_y$. In the following proof we will use a number of kernel estimates, which we present and prove in the Appendix. 

First we assume that $d(q)\geq\frac{1}{2}$ or $d(q)=0$, $e(q)\geq\frac{1}{2}$ or $e(q)=0$. We use the estimate of $p^d_t(x,y)$ (\ref{estimate_p}) to see
\[p^d_{t-s}(x,x_1)\leq\frac{C}{\sqrt{t-s}}e^{-\frac{(\sqrt{x}-\sqrt{x_1})^2}{2(t-s)}}\frac{1}{\sqrt{x_1}},\]
so we deduce that:
\begin{align*}
    |\textbf{K}_tg|\leq Ce^{-\frac{(\sqrt{x}-\sqrt
    x_2)^2+(\sqrt{y}-\sqrt{y_2})^2}{2t}}.
\end{align*}

Next we turn to estimate $\textbf{B}g$. It suffices to consider $L_x$ so that
\begin{align}
    \widetilde{L}-L_M=\Theta((x-x_2)+(y-y_2))L_x.
\end{align}
Since the $t-$regularity of $(\widetilde{L}-L_M)K_{t-s}$ is $(t-s)^{-2}$ which is not integrable in $s$, using the fact that
\[\int_0^\infty x\partial_x^2p^d_t(x,y)dy=0,\]
we split $\textbf{B}g$ into two parts
\begin{gather}\label{Bg}
    \textbf{B}g=(\tilde{L}-L_M)\int_{\frac{t}{2}}^t\int_{\mathbb{R}_+^2} K_{t-s}(x,y,x_1,y_1)(g(s,x_1,y_1)-g(s,x,y_1))dx_1dy_1ds\\
    +(\tilde{L}-L_M)\int_0^{\frac{t}{2}}\int_{\mathbb{R}_+^2} K_{t-s}(x,y,x_1,y_1)g(s,x_1,y_1)dx_1dy_1ds.
\end{gather}
The first term is bounded by 
\[(\tilde{L}-L_M)\int_{\frac{t}{2}}^t\int_{\mathbb{R}_+^2} K_{t-s}(x,y,x_1,y_1)\frac{1}{s^{j+\frac{\gamma}{2}}}|\sqrt{x}-\sqrt{x_1}|^\gamma\left(e^{-\frac{(\sqrt{x_1}-\sqrt{x_2})^2}{2s}}+e^{-\frac{(\sqrt{x}-\sqrt{x_2})^2}{2s}}\right) e^{-\frac{(\sqrt{y_1}-\sqrt{y_2})^2}{2s}}dx_1dy_1ds.\]
We use the estimates of derivatives (\ref{second_estimate}), (\ref{first_estimate}) to see 
\begin{align}\label{L_x}
    |\partial_xp^d_{t-s}(x,x_1)|, |x\partial_x^2p^d_{t-s}(x,x_1)|\leq\frac{C}{(t-s)^\frac{3}{2}}e^{-\frac{(\sqrt{x}-\sqrt{x_1})^2+(y^s-y^s_1)^2}{2(t-s)}}\frac{1}{\sqrt{x_1}}.
\end{align}
By the integral
\begin{align}\label{integral}
    \int_{-\infty}^\infty\frac{1}{\sqrt{t-s}}e^{-\frac{(x-x_1)^2}{t-s}}\frac{1}{\sqrt{s}}e^{-\frac{(x_1-x_2)^2}{s}}dx_1= \frac{1}{\sqrt{t}}e^{-\frac{(x-x_2)^2}{t}}
\end{align}
and the contribution of coefficients of $\tilde{L}-L_M$:
\begin{align}\label{t_sqrt}
   |x-x'|\cdot e^{-\frac{(x-x')^2}{t}}\leq C\sqrt{t}e^{-\frac{(x-x')^2}{2t}}, 
\end{align}
we can deduce that (\ref{Bg}) is integrable and
\[|\textbf{B}g|\leq C\frac{1}{\sqrt{t}}e^{-\frac{(\sqrt{x}-\sqrt{x_2})^2+(\sqrt{y}-\sqrt{y_2})^2}{2t}}.\]

Finally we turn to estimate $[\cdot]_\gamma$ part of $\textbf{B}g$. We consider two cases.
In the first case when $|\sqrt{x}-\sqrt{x'}|\geq\sqrt{t}$,
\[|\textbf{B}g(t,x,y)-\textbf{B}g(t,x',y)|\leq |\textbf{B}g(t,x,y)+\textbf{B}g(t,x',y)|\cdot\left(\frac{|\sqrt{x}-\sqrt{x'}|}{t}\right)^\gamma.\]
In the second case when $|\sqrt{x}-\sqrt{x'}|\leq \sqrt{t}$,  
\[|\sqrt{x}-\sqrt{x_2}|\leq |\sqrt{x'}-\sqrt{x_2}|+\sqrt{t},\]
so that multiplying this term with $(e^{-\frac{(\sqrt{x}-\sqrt{x_2})^2}{t}}+e^{-\frac{(\sqrt{x'}-\sqrt{x_2})^2}{t}})$,
\begin{align}\label{x-x'}
    |\sqrt{x}-\sqrt{x_2}|\cdot(e^{-\frac{(\sqrt{x}-\sqrt{x_2})^2}{t}}+e^{-\frac{(\sqrt{x'}-\sqrt{x_2})^2}{t}})\leq C\sqrt{t}(e^{-\frac{(\sqrt{x}-\sqrt{x_2})^2}{2t}}+e^{-\frac{(\sqrt{x'}-\sqrt{x_2})^2}{2t}}).
\end{align}

\begin{enumerate}
    \item If $x'<\frac{x}{3}$, it suffices to show that 
    \begin{align*}
        |x\partial_x^2\textbf{K}_tg(t,x,y)|\leq C\frac{x^{\frac{\gamma}{2}}}{t^{j+\frac{\gamma}{2}}}e^{-\frac{d(x,y)}{2t}}.
    \end{align*}
    This is obtained similarly to $\textbf{B}g$ in (\ref{Bg}): we replace the estimate (\ref{L_x}) with
    \begin{align*}
        x\partial_x^2p(t-s,x,x_1)dx_1\leq\frac{Cx^{\frac{\gamma}{2}}}{(t-s)^\frac{3+\gamma}{2}}e^{-\frac{(\sqrt{x}-\sqrt{x_1})^2+(y^s-y^s_1)^2}{2(t-s)}}dx^s_1.
    \end{align*}
    Then
    \begin{gather*}
        |(x-x_2)x\partial_x^2\textbf{K}_tg(t,x,y)-(x'-x_2)x'\partial_x^2\textbf{K}_tg(t,x',y)|\leq
        |(x-x_2)-(x'-x_2)|x\partial_x^2\textbf{K}_tg(t,x,y)\\
        +|(x'-x_2)|\cdot|x\partial_x^2\textbf{K}_tg(t,x,y)-x'\partial_x^2\textbf{K}_tg(t,x',y)|.
    \end{gather*}
    The first term is bounded by $C\frac{|\sqrt{x}-\sqrt{x'}|^\gamma}{t^{j+\frac{\gamma-1}{2}}}e^{-\frac{d(x,y)}{2t}}$. The estimate below
    \begin{gather*}
        |x\partial_x^2\textbf{K}_tg(t,x,y)-x'\partial_x^2\textbf{K}_tg(t,x',y)|\leq C\frac{x^{\frac{\gamma}{2}}}{t^{j+\frac{\gamma}{2}}}\left(e^{-\frac{d(x,y)}{2t}}+e^{-\frac{d(x',y)}{2t}}\right)\\
        \leq C\frac{|\sqrt{x}-\sqrt{x'}|^\gamma}{t^{j+\frac{\gamma}{2}}}\left(e^{-\frac{d(x,y)}{2t}}+e^{-\frac{d(x',y)}{2t}}\right)
    \end{gather*}
    and (\ref{x-x'}) show that 
    the second term is bounded by $C\frac{|\sqrt{x}-\sqrt{x'}|^\gamma}{t^{j+\frac{\gamma-1}{2}}}e^{-\frac{d(x,y)}{2t}}$.

    \item We assume that $\frac{x}{3}<x'<x$,
    
    Since we have established the H\"older continuity of the first derivative, it suffices to establish the H\"older continuity when $\tilde{L}-L_M=(x-x_2)(x\partial^2_x+d\partial_x)$. We split $\textbf{B}^2g$ into two parts
\begin{gather}\label{B^2g}
    \textbf{B}g(t,x,y)-\textbf{B}g(t,x',y)  =\\
    (\tilde{L}-L_M)\int_{\frac{t}{2}}^t\int_{\mathbb{R}^2_+}\left[ K_{t-s}(x,y,x_1,y_1)-K_{t-s}(x',y,x_1,y_1)\right]g(s,x_1,y_1)dx_1dy_1ds\\
    +(\tilde{L}-L_M)\int_0^{\frac{t}{2}}\int_{\mathbb{R}^2_+}  \left[K_{t-s}(x,y,x_1,y_1)-K_{t-s}(x',y,x_1,y_1)\right]g(s,x_1,y_1)dx_1dy_1ds\\
    =A+B.
\end{gather}
With $J=[\alpha,\beta]$, $\sqrt{\alpha}=\frac{3\sqrt{x'}-\sqrt{x}}{2},\ \sqrt{\beta}=\frac{3\sqrt{x}-\sqrt{x'}}{2}$, we have
\begin{gather}
    A=\int_{\frac{t}{2}}^t[\int_{J}(\tilde{L}-L_M)K_{t-s}(x,x_1,y,y_1)(g(s,x_1,y_1)-g(s,x,y_1))dx_1dy_1-\\
    \int_{J}(\tilde{L}-L_M)K_{t-s}(x',x_1,y,y_1)(g(s,x_1,y_1)-g(s,x',y_1))dx_1dy_1-\\
    \int_{J^c}(\tilde{L}-L_M)K_{t-s}(x,x_1,y,y_1)(g(s,x,y_1)-g(s,x',y_1))dx_1dy_1+\\
    \int_{J^c}((\tilde{L}-L_M)K_{t-s}(x,x_1,y,y_1)-(\tilde{L}-L_M)K_{t-s}(x',x_1,y,y_1))(g(s,x_1,y_1)-g(s,x',y_1))dx_1dy_1]ds\\
    =I_1+I_2+I_3+I_4.
\end{gather}
\end{enumerate}
We first estimate $I_3$. Based on the observation that for $t>0$,
\[\partial_tp_d(t,x,x_1)=L_{d,x}p_d(t,x,x_1)=L^*_{d,x_1}p_d(t,x,x_1),\]
the operator $L^t_x=\partial_{x_1}(\partial_{x_1}x_1-d)$, so we can perform $x_1$-integral to obtain that 
\begin{align*}
    I_3\leq\int_{\frac{t}{2}}^t\left[(\partial_{x_1}x_1-d)K_{t-s}(x,\alpha,y,y_1)-(\partial_{x_1}x_1-d)K_{t-s}(x,\beta,y,y_1)\right](g(s,x,y_1)-g(s,x',y_1))dx_1dy_1ds.
\end{align*}
We use the H\"older estimate of $g$ (\ref{g_holder})
and \cite{EM} Lemma 7.1.16 to complete this step. The needed bounds for first two terms $I_1, I_2$ are given in \cite{EM} Lemma 7.1.17.
This leaves only the terms $B$ and $I_4$. We use that
\[|LK_{t-s}(x,x_1,y,y_1)-LK_{t-s}(x',x_1,y,y_1)|\leq\frac{C|\sqrt{x}-\sqrt{x'}|^\gamma }{(t-s)^{\frac{3+\gamma}{2}}} \left(e^{-\frac{(\sqrt{x}-\sqrt{x_1})^2}{2(t-s)}}+e^{-\frac{(\sqrt{x'}-\sqrt{x_1})^2}{2(t-s)}}\right)e^{-\frac{(y^s-y^s_1)^2}{2(t-s)}}\]
and again the H\"older estimate of $g$ in (\ref{g_holder}).
On $J^c$, $\frac{1}{3}\leq\frac{|x'-x_1|}{|x-x_1|}\leq 3$ is bounded below and above, then (\ref{t_sqrt}) combined with (\ref{x-x'}) show that $I_4$ is integrable. $B$ is estimated similarly. Taken together, we obtain the result.

If $0<d(q)\leq\frac{1}{2}$ or $0<e(q)\leq\frac{1}{2}$, all the arguments are essentially similar to those above except that we need to replace the integral (\ref{integral}) with the estimate: for $0<d<1$,
\begin{align}
    \int_{-\infty}^\infty\frac{1}{(t-s)^d}e^{-\frac{(\sqrt{x}-\sqrt{x_1})^2}{t-s}}\frac{1}{x_1^{1-d}}\frac{1}{\sqrt{s}}e^{-\frac{(\sqrt{x_1}-\sqrt{x_2})^2}{s}}dx_1\leq C_p\frac{1}{(t-s)^d\sqrt{s}}\left(\sqrt{\frac{(t-s)s}{t}}\right)^{2d}e^{-\frac{(\sqrt{x}-\sqrt{x_2})^2}{t}}.
\end{align}
Notice that the degree of time $t$ on the right hand side above is independent of $d$, so in total the regularity of time $t$ in this case is same as that when $d>\frac{1}{2}$.
\end{proof}
Then we can show the existence of local heat kernel and analyze their behavior as $t\to 0$.
\begin{proposition}
There exists a local heat kernel satisfying \eqref{31} in $U_0$. Moreover, for any $f\in C_c(U_0)$, 
\begin{align}\label{kernel0}
(\partial_t-L)\int_{U_0} q_t(x,y,x_1,y_1)f(x_1,y_1)dx_1dy_1=0,
\end{align}
and 
\begin{enumerate}\label{delta}
    \item If $(0,0)$ is a transverse point,  \begin{align}\label{kernel1}
    \lim_{t\to 0}\int_{U_0} q_t(0,0,x_1,y_1)f(x_1,y_1)dx_1dy_1=f(0,0).
\end{align}
\item If $(0,0)$ is a tangent point,
\begin{align}\label{kernel2}
    \lim_{t\to 0}\int_{U_0} q_t(0,0,x_1,y_1)f(x_1,y_1)dx_1dy_1=0.
\end{align}
\end{enumerate}
\end{proposition}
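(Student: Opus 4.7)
The plan is to construct $q_t$ as a parametrix series built from the explicit model kernel $K_t(\cdot,q)$ of $L_{q,M}$, and then identify its $t\downarrow 0$ behavior at the corner by reducing to the one-dimensional Wright--Fisher factors. Following the outline in \eqref{iteration1}, set $\Phi_0(t,\cdot,q)=K_t(\cdot,q)$ and recursively
\begin{equation*}
  \Phi_{i+1}(t,\cdot,q) = -\int_0^t \mathbf{K}_{t-s}\bigl[(\widetilde L-L_{q,M})\Phi_i(s,\cdot,q)\bigr]\,ds,
\end{equation*}
so that $(\partial_t-\widetilde L)\sum_{i=0}^{N-1}\Phi_i = \mathbf{B}^N\delta$ telescopes to the error in \eqref{iteration1}. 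If the series can be closed, $q_t:=\sum_{i\ge 0}\Phi_i$ solves $(\partial_t-\widetilde L)q_t(\cdot,q)=0$ with $\lim_{t\to 0}q_t(\cdot,q)=\delta_q$, and because $\widetilde L\equiv L$ on $\operatorname{supp}\phi_{0,i}$ this is a local heat kernel satisfying \eqref{31}; \eqref{kernel0} then follows by differentiating under the integral.

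For convergence I iterate Proposition \ref{iteration}: each application of $\mathbf{B}$ gains a factor of $\sqrt{t}$, but only at the price of halving the exponent in the Gaussian-type bound. A naive Neumann sum therefore does not close in any single weighted class, so the workable approach is to apply the proposition a finite number $N$ of times until $\mathbf{B}^N\delta$ lies in $C^{0,\gamma}(U_0\times[0,T])$ with a manageable exponential weight, and then solve the residual inhomogeneous problem $(\partial_t-\widetilde L)r_t=\mathbf{B}^N\delta$ with $r_0=0$ via Proposition \ref{inhomogeneous}. Balancing the $\sqrt{t}$ gain per step against the compounding loss of exponential weight is the main technical obstacle.

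With $q_t$ in hand, the behavior at $(0,0)$ reduces to a one-dimensional analysis. The iterated bounds of Proposition \ref{iteration} imply that for $i\ge 1$ the correction $\Phi_i(t,0,0,x_1,y_1)$ carries a surplus power of $\sqrt t$ over $K_t$, hence $\int \Phi_i(t,0,0,x_1,y_1)f(x_1,y_1)\,dx_1dy_1\to 0$ as $t\to 0$ for every $f\in C_c(U_0)$. Thus the limit equals
\begin{equation*}
  \lim_{t\to 0}\int_{U_0} p^{d(q)/a(q)}_{a(q)t}(0,x_1)\,p^{e(q)/b(q)}_{b(q)t}(0,y_1)\,f(x_1,y_1)\,dx_1dy_1,
\end{equation*}
and the dichotomy follows from the closed form of $p^d_t(0,\cdot)$.

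If $d>0$, the explicit expression $p^d_t(0,x_1) = \Gamma(d)^{-1}t^{-d}x_1^{d-1}e^{-x_1/t}$ and the substitution $u=x_1/t$ give $\int_0^\infty p^d_t(0,x_1)h(x_1)\,dx_1\to h(0)$ for every continuous $h$. If $d=0$, then $p^0_t(0,x_1)=\delta(x_1)$ is a boundary Dirac mass whose restriction to the open half-line $\{x_1>0\}\subset\mathring P$ is identically zero. Combining these two facts on each factor (and using dominated convergence in the mixed case), one obtains $f(0,0)$ precisely when both $d(q),e(q)>0$ (transverse corner, \eqref{kernel1}) and $0$ as soon as at least one of $d(q),e(q)$ vanishes (tangent corner, \eqref{kernel2}).
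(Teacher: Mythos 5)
Your construction of $q_t$ (finitely many applications of $\mathbf{B}$ via Proposition \ref{iteration}, then solving the residual inhomogeneous problem, then differentiating under the integral) is the same route the paper takes, and your observation that the corrections $\mathbf{K}_t\mathbf{B}^i\delta$ contribute nothing to the $t\to 0$ limit also matches. Note, however, that before Proposition \ref{iteration} can be iterated you must first verify that $\mathbf{B}\delta=(\widetilde L-L_{q,M})K_t$ itself satisfies its hypotheses; this requires the kernel bound \eqref{bound}, whose proof occupies a nontrivial part of the paper (in particular the case $d(0,0)=0$, where one needs $\psi_{d}(z)\le C(y+z)$ via the vanishing of $1/\Gamma$ at $0$). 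You flag the series convergence as "the main technical obstacle" without resolving it, but your finite-$N$ strategy is exactly what the paper does, so I do not count that as a gap in approach.

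The genuine gaps are in the $t\to 0$ analysis. First, the leading kernel is $p_t^{d(x_1,y_1)}(0,x_1)\,p_t^{e(x_1,y_1)}(0,y_1)$ with the Bessel index frozen at the \emph{integration} point, which varies over $U_0$; your substitution $u=x_1/t$ is only valid for a constant index. The paper bridges this by a mean value theorem in the index, a uniform bound on $\partial_d p^d_t(0,\cdot)$ involving $\ln(y/t)$ and $\Gamma'/\Gamma^2$, and the Taylor bound $|d(x,y)-d(0,0)|\le C(x+y)$, showing the difference $K_t-K_t'$ integrates to zero; none of this is in your argument. Second, and more seriously, your tangent-case reasoning is wrong. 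The dichotomy is governed by $d(0,0),e(0,0)$ (equivalently, by Assumption \ref{assumption}, by the vanishing of the drift along an entire adjacent edge), not by $d(x_1,y_1)$ at the integration point, which is generically positive in the interior even at a tangent corner. Moreover, if you freeze at the corner with, say, $e(0,0)=0$ and $d(0,0)>0$, the kernel is $p_t^{d(0,0)}(0,x_1)\,\delta(y_1)$, and integrating against $f\in C_c(U_0)$ (which need not vanish on $bP$) yields $f(0,0)$, not $0$ --- the Dirac mass on the boundary does not "restrict to zero." The actual mechanism for \eqref{kernel2} is quantitative: since $e(x,0)\equiv 0$ on the tangent edge, $e(x,y)\le My$, hence $1/\Gamma(e(x,y))\le Ce(x,y)\le Cy$, which gives $p_t^{e(x,y)}(0,y)\le Ct\bigl(p_t^1(0,y)+p_t^{M+1}(0,y)\bigr)$ and forces the integral to be $O(t)$. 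Your argument as written would assign the limit $f(0,0)$ to tangent corners with one nonvanishing drift, contradicting \eqref{kernel2}.
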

\begin{proof}
First we show that $\textbf{B}\delta=(\tilde{L}-L_M)K_t$ satisfies the assumption in Proposition \ref{iteration}. We begin by writing
\begin{align*}
    K_t(x,y,x_1,y_1)=p^{d(x_1,y_1)}_t(x,x_1)p_t^{e(x_1,y_1)}(y,y_1).
\end{align*}
\paragraph{1. If $d(0,0)>0$:}by shrinking the neighborhood $U_0$, we might as well assume that $d(x,y)\in [d_1,d_2]$ for some $0<d_1<d_2$. Then by the estimates in \eqref{estimate_p}, \eqref{second_estimate}, \eqref{first_estimate},
\begin{gather*}
    |\partial_xp_{d(x_1,y_1)}(t,x,x_1)|, |x\partial_x^2p_{d(x_1,y_1)}(t,x,x_1)|\leq\frac{C}{t^{\frac{3}{2}}}e^{-\frac{(\sqrt{x}-\sqrt{x_1})^2}{2t}}(x_1^{d_1-1}+\frac{1}{\sqrt{x_1}}),\\
    |p_{d(x_1,y_1)}(t,x,x_1)|\leq\frac{C}{\sqrt{t}}e^{-\frac{(\sqrt{x}-\sqrt{x_1})^2}{2t}}(x_1^{d_1-1}+\frac{1}{\sqrt{x_1}}).
\end{gather*}
\paragraph{2. If $d(0,0)=0$:}so $d(0,y)=0$ by Assumption \ref{assumption}. Again by shrinking $U_0$ we may assume that $d(x,y)\in [0,\frac{1}{2}]$.
A more precise estimate is
\begin{align*}
    p_{d(x,y)}(t,x,y)
    \leq\begin{cases}
\begin{array}{cc}
\frac{1}{t^{d(x,y)}}e^{-\frac{x+y}{t}}y^{d(x,y)-1}\psi_{d(x,y)}(\frac{xy}{t^2}) & \text{when}
    \ \frac{xy}{t^2}<1\\
\frac{1}{\sqrt{yt}}e^{-\frac{(\sqrt{x}-\sqrt{y})^2}{t}} & \text{\ when}
    \ \frac{xy}{t^2}>1
\end{array}.\end{cases}
\end{align*}
Using the expansion of $\psi_d$ at $0$, for $0\leq z\leq 1$,
\[\psi_d(z)=\psi_d(0)+\psi_d'(c)z\]
for some $c\in [0,1]$.  $\psi_d'$ is uniform bounded on $d\in (0,\frac{1}{2}]\times[0,1]$. 
Since $\psi_d(0)=\frac{1}{\Gamma(d)}$, $\frac{1}{\Gamma(0)}=0$ and $\frac{\partial}{\partial d}\frac{1}{\Gamma(d)}$ is uniformly bounded on $(0,\frac{1}{2}]$, where $\Gamma(\cdot)$ is the usual Gamma function, we have
\[\psi_{d(x,y)}(\frac{xy}{t^2})\leq\frac{1}{\Gamma(d(x,y))}+C\frac{xy}{t^2}\leq C(y+\frac{xy}{t^2}).\]
In all 
\begin{align*}
    p_{d(x,y)}(t,x,x_1)\leq C\frac{1}{\sqrt{x_1t}}e^{-\frac{(\sqrt{x}-\sqrt{x_1})^2}{2t}}.
\end{align*}
Similarly we can show that the derivatives of $p_{d(x,y)}(x,x_1)$ are bounded by
\begin{align*}
    |\partial_xp_{d(x_1,y_1)}(t,x,x_1)|, |x\partial_x^2p_{d(x_1,y_1)}(t,x,x_1)|\leq\frac{C}{t^{\frac{3}{2}}}e^{-\frac{(\sqrt{x}-\sqrt{x_1})^2}{2t}}\frac{1}{\sqrt{x_1}}.
\end{align*}

Above all, there exists $d_1,d_2>0$ such that 
\begin{gather}
    |\partial_xp_{d(x_1,y_1)}(t,x,x_1)|, |x\partial_x^2p_{d(x_1,y_1)}(t,x,x_1)|\leq\frac{C}{t^{\frac{3}{2}}}e^{-\frac{(\sqrt{x}-\sqrt{x_1})^2}{2t}}x_1^{d_1-1}\\
    |p_{d(x_1,y_1)}(t,x,x_1)|\leq\frac{C}{\sqrt{t}}e^{-\frac{(\sqrt{x}-\sqrt{x_1})^2}{2t}}x_1^{d_1-1},
\end{gather}
and 
\begin{gather}
    |\partial_yp_{e(x_1,y_1)}(t,y,y_1)|, |y\partial_y^2p_{e(x_1,y_1)}(t,y,y_1)|\leq\frac{C}{t^{\frac{3}{2}}}e^{-\frac{(\sqrt{y}-\sqrt{y_1})^2}{2t}}y_1^{d_2-1}\\
    |p_{e(x_1,y_1)}(t,y,y_1)|\leq\frac{C}{\sqrt{t}}e^{-\frac{(\sqrt{y}-\sqrt{y_1})^2}{2t}}y_1^{d_2-1}.
\end{gather}
Finally because 
\begin{align*}
    L-L_M=\Theta((x-x_2)+(y-y_2))(x\partial_x^2+\partial_x+y\partial_y^2+\partial_y),
\end{align*}
combined with the estimates above, we conclude that 
\begin{align}\label{bound}
    |(L-L_M)K_t|\leq\frac{C}{t^{\frac{3}{2}}}e^{-\frac{(\sqrt{x}-\sqrt{x_1})^2+(\sqrt{y}-\sqrt{y_1})^2}{2t}}x_1^{d_1-1}y_1^{d_2-1},
\end{align}
so we can iterate Proposition \ref{iteration} until $\textbf{B}^5\delta\in\mathcal{C}^{0,\gamma}(U_0\times [0,T])$, for which by the results in  Theorem \ref{heat_equation}, there exists the solution $Q^t\textbf{B}^5\delta\in\mathcal{C}^{0,2+\gamma}(P\times [0,T])$.
Recall that
\begin{align}\label{iteration2}
    (\partial_{t}-\widetilde{L})\left(q_{t}(\cdot,q)-K_{t}(t,\cdot,q)-\sum_{i=1}^{N-1}\textbf{K}_{\textbf{t}}\textbf{B}^i\delta\right) = \textbf{B}^{N}\delta,
\end{align}
hence there exists a local heat kernel $q_t(x,y,x_2,y_2)$ in $U_0$, which has the expansion
\begin{gather}\label{expansion}
    q_t(x,y,x_2,y_2)=K_t(x,y,x_2,y_2)+\sum_{i=1}^{4}\textbf{K}_{\textbf{t}}\textbf{B}^i\delta+Q^t\textbf{B}^5\delta.
\end{gather}

Moreover, the  derivatives of it \[\partial_xq_t(x,y,x_1,y_1), \partial_yq_t(x,y,x_1,y_1), x\partial_x^2q_t(x,y,x_1,y_1), y\partial_y^2q_t(x,y,x_1,y_1)\]
are also bounded by the right hand side of (\ref{bound}).

Therefore, on $[t_0,t_1]$, the integral of these derivatives converges uniformly on $t\in [t_0,t_1],(x,y)\in U_0$. Hence we can exchange derivatives with integration to obtain that
\begin{align}
(\partial_t-L)\int_{U_0} q_t(x,y,x_1,y_1)f(x_1,y_1)dx_1dy_1=\int_{U_0} (\partial_t-L)q_t(x,y,x_1,y_1)f(x_1,y_1)dx_1dy_1=0.
\end{align}

Finally we turn to prove (\ref{delta}). Based on the expansion (\ref{expansion}), it suffices to show that (\ref{delta}) holds for the fundamental solution 
\begin{align}\label{FUNDAMENTAL}
    K_t(0,0,x,y)=p^{d(x,y)}_t(0,x)\cdot p^{e(x,y)}_t(0,y).
\end{align}
\paragraph{1. If $(0,0)$ is a transverse point,} we assume that $d(x,y),e(x,y)\in [d_1,d_2]$ for some $0<d_1<d_2$.
We denote by
\begin{align}\label{kernel_at_0}
    K'_t(0,0,x,y)=p^{d(0,0)}_t(0,x)\cdot p^{e(0,0)}_t(0,y).
\end{align}
Since $\underset{t\to 0}{\lim}K_t'(0,0,x,y)=\delta(0,0)$, it is suffices to show that
\begin{align}\label{00}
    \lim_{t\to 0} (K_t(0,0,x,y)-K'_t(0,0,x,y))=0.
\end{align}
We have
\begin{gather*}
    |K_t(0,0,x,y)-K'_t(0,0,x,y)|\\
    \leq |(p^{d(x,y)}_t(0,x)-p^{d(0,0)}_t(0,x)||p^{e(x,y)}_t(0,y)|+|(p^{e(x,y)}_t(0,y)-p^{e(0,0)}_t(0,y)||p^{d(0,0)}_t(0,x)|\\
    =:I+II.
\end{gather*}
By the mean value theorem, there exists $c\in [d,e]$ such that
\begin{align*}
    |p^d(t,0,y)-p^e(t,0,y)|=|\frac{\partial}{\partial_d}p_c(t,0,y)|\cdot |d-e|.
\end{align*}
Since
\begin{gather*}
    |\frac{\partial}{\partial_d}p_c(t,0,y)|=\frac{1}{y}e^{-\frac{y}{t}}\left(\frac{y}{t}\right)^c|\frac{1}{\Gamma(c)}\text{ln}\frac{y}{t}-\frac{\Gamma'(c)}{\Gamma^2(c)}|\leq \frac{C}{y}e^{-\frac{y}{t}}\left(\frac{y}{t}\right)^c\left[ln\frac{y}{t}+1\right]\\
    \leq \frac{C}{y}e^{-\frac{y}{t}}(\left(\frac{y}{t}\right)^{d_1}+\left(\frac{y}{t}\right)^{d_2})\left[ln\frac{y}{t}+1\right],
\end{gather*}
hence 
\begin{align*}
    |p_{d_2(x,y)}(t,0,y)-p_{d_2(0,0)}(t,0,y)|\leq C(x+y)\frac{1}{y}e^{-\frac{y}{t}}(\left(\frac{y}{t}\right)^{d_1}+\left(\frac{y}{t}\right)^{d_2})\left[ln\frac{y}{t}+1\right]
\end{align*}by the Taylor expansion of $d_2$ at (0,0).
It is not hard to show that 
\begin{gather*}
     \lim_{t\to 0}\int_0^\infty xp_{d_1(0,0)}(t,0,x)dx=0\\
    \lim_{t\to 0}\int_0^\infty y\frac{1}{y}e^{-\frac{y}{t}}(\left(\frac{y}{t}\right)^M+\left(\frac{y}{t}\right)^N)\left[ln\frac{y}{t}+1\right]dy=0.
\end{gather*}
Therefore we have 
\[\lim_{t\to 0}\int_{U_0} II\cdot f(x_1,y_1)dx_1dy_1=0.\]
Also we can obtain that 
\[\lim_{t\to 0}\int_{U_0} I\cdot f(x_1,y_1)dx_1dy_1=0,\]
which leads to (\ref{00}).

\paragraph{2. If $(0,0)$ is a tangent point,} we assume that $e(0,0)=0$. Then $e(x,0)=0$ by the assumption (\ref{assumption}). We also assume that $e(x,y)\in [0,M]$. We have
\begin{gather*}
    p^{e(x,y)}_t(0,y)=\frac{1}{t^{e(x,y)-1}}y^{e(x,y)}e^{-\frac{y}{t}}\frac{1}{\Gamma(e(x,y))}\leq C\frac{1}{t^{e(x,y)-1}}y^{e(x,y)+1}e^{-\frac{y}{t}}\\
    \leq Ct(p^1_t(0,y)+p^{M+1}_t(0,y)).
\end{gather*}
If $d(0,0)=0$, then it is also bounded by $Ct(p^1_t(0,y)+p^{M+1}_t(0,y))$, so
\begin{align*}
    \lim_{t\to 0}\ K_t(0,0,x,y)=0
\end{align*}
in the sense of distribution.
If $d(0,0)>0$, by the estimation in the previous case,
\begin{align*}
    \lim_{t\to 0}\ (p_t^{d(x,y)}(0,x)-p_t^{d(0,0)}(0,x))=0,
\end{align*}
combined with 
\begin{align*}
    \lim_{t\to 0}\ p_t^{d(0,0)}(0,x)p^{e(x,y)}_t(0,y)=0,
\end{align*}
we obtain that
\begin{align}\label{000}
    \lim_{t\to 0}K_t(0,0,x,y)=0
\end{align}
in the sense of distribution.
\end{proof}

\subsubsection{Other cases}
\paragraph{1. $q$ in a neighborhood of a regular edge point $p$.}
When $q=(x_2,y_2)\in U_3$ is in a neighborhood of a regular edge point, under local adapted coordinates, $L$ takes the form
\begin{align*}
L=a(x,y)x\partial_x^2+b(x,y)\partial_{yy}+d(x,y)\partial_x+e(x,y)\partial_y.
\end{align*}
Replacing $d(t,x,y)$ in Proposition \ref{iteration} with
\begin{gather*}
    d(t,x,y)=\frac{(\sqrt{x}-\sqrt{x_2})^2}{2a(q)t}+\frac{(y-y_2)^2}{4b(q)t},
\end{gather*} 
we can similarly prove the existence of the local heat kernel through its series expansion 
\begin{gather*}
    q_t(x,y,x_2,y_2)=K_t(x,y,x_2,y_2)+\sum_{i=1}^{4}\textbf{K}_{\textbf{t}}\textbf{B}^i\delta+Q^t\textbf{B}^5\delta
\end{gather*}
where $K_t(x,y,x_2,y_2)=p^{d(q)/a(q)}_{a(q)t}(x,x_2)k^e_{b(q)t}(y,y_2)$. By comparing $K_t(p,q)$ with fundamental solution of model operator with coefficients frozen at $p$, we can derive the result like in \ref{00}, \ref{000}. That being said, if $p$ is a transverse point, then $\underset{t\to 0}{\lim}\  q_t(p,\cdot)=\delta(p)$, if $p$ is a tangent point, then $\underset{t\to 0}{\lim}\  q_t(p,\cdot)=0$. 

\paragraph{2. $q$ in a neighborhood of a mixed corner/infinity edge point $p$.} When $q=(x_2,y_2)\in U_1$ is in a neighborhood of a regular edge point, under local adapted coordinates \ref{cmix}, $L$ takes the form
\begin{align*}
L=a(x,y)x\partial_x^2+b(x,y)y^2\partial_{yy}+d(x,y)\partial_x+e(x,y)y\partial_y.
\end{align*}
We can similarly prove the existence of the local heat kernel through its series expansion 
\begin{gather*}
    q_t(x,y,x_2,y_2)=K_t(x,y,x_2,y_2)+\sum_{i=1}^{4}\textbf{K}_{\textbf{t}}\textbf{B}^i\delta+Q^t\textbf{B}^5\delta.
\end{gather*}
where $K_t(x,y,x_2,y_2)=p^{d(q)/a(q)}_{a(q)t}(x,x_2)\frac{1}{\sqrt{4\pi t}}exp\left[-\frac{(\text{ln} y-\text{ln}y_2)^2}{4b(x_2,y_2)t}\right]\frac{1}{y_2}$.
Particularly when $y=0$,
\[q_t(x,0,x_2,y_2)=p_t(x,x_2)\delta_0(y_2),\]
where $p_t(x,x_2)$ is a 1-dimensional heat kernel of Kimura operator on $y=0$.
This implies that the infinity edge is isolated from $\mathring{P}$, the diffusion starting from the $E_\infty$ stay on it. 

\paragraph{3. when $q$ in a neighborhood of an infinity corner $p$.}
When $q=(x_2,y_2)\in U_3$ is in a neighborhood of a regular edge point, under local adapted coordinates, $L$ takes the form
\begin{align*}
L=a(x,y)x^2\partial_x^2+b(x,y)y^2\partial_y^2+d(x,y)x\partial_{x}+e(x,y)y\partial_y.
\end{align*}
Again we can similarly prove the existence of the local heat kernel through its series expansion. 
$q_t(x,y,x_2,y_2)$ takes the form
\[p_t(x,y,x_2,y_2)exp\left[-\frac{(\text{ln}x-\text{ln}x_2)^2}{4a(x_2,y_2)t}-\frac{(\text{ln}y-\text{ln}y_2)^2}{4b(x_2,y_2)t}\right]\frac{1}{x_2y_2},\] 
where $p_t(x,y,x_2,y_2)$ is bounded for $t>0$. When $x=0$ or $y=0$,
\begin{gather*}
    q_t(0,y,x_2,y_2)=\delta_0(x_2)\cdot p_t(y,y_2)\\
    q_t(x,0,x_2,y_2)=p_t(x,x_2)\cdot \delta_0(y_2)
\end{gather*}
where $p_t$ is the 1-dimensional heat kernel on $y=0$. In particular
\begin{gather*}
    q_t(0,0,x_2,y_2)=\delta_{(0,0)}(x_2,y_2),
\end{gather*}
which means $(0,0)$ is an isolated point.

In all for $q\in\mathring{P}$, if $p\in P^{reg}$, the heat kernel $q_t(p,q)$ is well defined and continuous at $p$. In other cases, $q_t(p,q)=0$. Specifically $q_t(p, \cdot)$ degenerates to a 1-dimensional along the quadratic edge when $p\in E_\infty\cup C_{mix}$, and $q_t(p, \cdot)$ is the delta function at $p$ when $p\in C_\infty$.  On the other hand for $p\in P^{reg}$, we investigated the limit behavior of $q_t(p,\cdot)$ when $t\to 0$. It tends to $\delta(p)$ if $p$ is a transverse point and tends to 0 in the sense distribution if $p$ is a tangent point. We summarize this in the following table:
\begin{table}[ht]
\centering 
\begin{threeparttable}
    \begin{tabular}{ccc}
    $p$ &  & $p_0$ \\
     \midrule\midrule
$C_{reg}$ & transverse & $\delta_p$\\
          & tangent & 0\\
$C_{mix}$ &  & 0\\ 
$C_\infty$ &  & 0 \\
$E_{reg}$ & transverse & $\delta_p$\\
          & tangent & 0\\
$E_{\infty}$ & & 0\\
\midrule\midrule
\end{tabular}
\end{threeparttable}
\caption{Limit of $q_t(p,\cdot)$ as $t\to 0$}
\end{table}

\newpage

\begin{proposition}\label{alpha}
Fix $\alpha>0$ and $k,l\in\mathbb{N}$, there exists constants $C,c>0$ depending on $\alpha,k,l$ such that if $|\sqrt{x}-\sqrt{x_1}|\geq\alpha>0$, $|y-y_1|\geq\alpha>0$ and $0\leq x,x_1,y,y_1\leq L$, then
\begin{gather*}
 |(\partial_x)^kp_d(t,x,x_1)|\leq Ce^{-c/t}x_1^{d-1}, \qquad |(\partial_y)^lk_t^e(y,y_1)|\leq Ce^{-c/t}.
\end{gather*}
\end{proposition}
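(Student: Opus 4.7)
The strategy is to extract, from the explicit formulas for both kernels, a Gaussian factor of the form $e^{-(\sqrt{x}-\sqrt{x_1})^2/(ct)}$ respectively $e^{-(y-y_1)^2/(ct)}$ and then exploit the off-diagonal hypotheses to bound these factors by $e^{-\alpha^2/(ct)}$. A single negative power of $t$ is always dominated by such a factor since $s^{-N}e^{-\alpha^2/(2cs)}$ is uniformly bounded on $s>0$ for any fixed $N$; so splitting the Gaussian into two equal pieces allows one piece to absorb all powers of $t^{-1}$ produced by differentiation, while the other delivers the claimed $e^{-c/t}$ decay. Both kernels are fully explicit, so there is no subtle regularity question---only bookkeeping of exponents.

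For $k_t^e$, direct differentiation yields
\[
\partial_y^l k_t^e(y,y_1)=\frac{(-1)^l}{(2t)^{l/2}\sqrt{4\pi t}}\,H_l\!\left(\frac{y-y_1}{\sqrt{2t}}\right)e^{-(y-y_1)^2/(4t)},
\]
where $H_l$ is the Hermite polynomial of degree $l$. Since $|y-y_1|\leq L$ each monomial in $H_l$ contributes at most a power of $t^{-1/2}$, so together with the prefactor the algebraic part is bounded by a polynomial in $t^{-1/2}$ of degree at most $l+1$. Writing $e^{-(y-y_1)^2/(4t)}=e^{-(y-y_1)^2/(8t)}\cdot e^{-(y-y_1)^2/(8t)}$ and using $|y-y_1|\geq\alpha$ in the first factor kills the polynomial, while the second factor is bounded by $e^{-\alpha^2/(8t)}$, giving the stated bound with $c=\alpha^2/8$.

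For the Kimura kernel I would start from the representation $p^d_t(x,x_1)=(x_1/t)^d e^{-(x+x_1)/t}\psi_d(xx_1/t^2)\,x_1^{-1}$, where $\psi_d(z)=z^{(1-d)/2}I_{d-1}(2\sqrt{z})$. Each application of $\partial_x$ produces a finite linear combination of terms of the form $t^{-a}x_1^{b}\,e^{-(x+x_1)/t}\,\psi_d^{(j)}(xx_1/t^2)$ with $a,b,j\in\mathbb{N}_0$ bounded in terms of $k$. Using the power-series expansion of $\psi_d$ for bounded argument and the Bessel asymptotic $I_{d-1}(2\sqrt{z})\sim e^{2\sqrt{z}}/(2\sqrt{\pi}\,z^{1/4})$ for large argument (together with its differentiated versions), the product $e^{-(x+x_1)/t}\psi_d^{(j)}(xx_1/t^2)$ is dominated by $e^{-(\sqrt{x}-\sqrt{x_1})^2/t}$ times algebraic factors in $t,x,x_1$. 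The off-diagonal hypothesis $|\sqrt{x}-\sqrt{x_1}|\geq\alpha$ turns this into $e^{-\alpha^2/t}$, and the same absorption trick as above swallows all the $t^{-1}$ factors; since the hypothesis also forces $\max(x,x_1)\geq\alpha^2/4$, the algebraic factors in $x$ are bounded by constants depending on $\alpha$ and $L$, leaving only the natural $x_1^{d-1}$ weight.

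The main technical nuisance is the transition between the two asymptotic regimes of $\psi_d$, and I expect to split the domain into $\{xx_1/t^2\leq 1\}$ and $\{xx_1/t^2>1\}$ and carry out the bound separately in each. In the small-argument regime $\psi_d$ and its derivatives are bounded on compact sets, so the decay comes entirely from $e^{-(x+x_1)/t}\leq e^{-(\sqrt{x}-\sqrt{x_1})^2/t}\leq e^{-\alpha^2/t}$; in the large-argument regime the exact algebraic identity $-(x+x_1)/t+2\sqrt{xx_1}/t=-(\sqrt{x}-\sqrt{x_1})^2/t$ produces the same Gaussian factor, at the cost of additional $z^{(1-2d)/4-j/2}$ factors in $z=xx_1/t^2$ which are harmless since $\max(x,x_1)$ is bounded away from $0$ and $\infty$. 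The boundary case $d=0$, where $p^0_t$ contains the additional term $e^{-x/t}\delta(x_1)$, reduces to the same estimate because the hypothesis with $x_1=0$ forces $x\geq\alpha^2$.
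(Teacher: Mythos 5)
Your proposal is correct and follows essentially the same route the paper takes: extract the Gaussian factor $e^{-(\sqrt{x}-\sqrt{x_1})^2/t}$ from the explicit kernel formulas by splitting on $xx_1/t^2\lessgtr 1$ and using the Bessel asymptotics for $\psi_d$, then split that Gaussian in half so one piece absorbs the polynomial-in-$1/t$ factors produced by differentiation and the other gives $e^{-c/t}$ via the off-diagonal hypothesis. One minor slip to fix: since $k_t^e=\tfrac{1}{\sqrt{4\pi t}}e^{-u^2}$ with $u=(y-y_1)/(2\sqrt{t})$, the Hermite argument should be $(y-y_1)/(2\sqrt{t})$ with prefactor $(2\sqrt{t})^{-l}$, not $(y-y_1)/\sqrt{2t}$.
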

\begin{proof}
For the first estimate, let $\lambda=\frac{x}{t}, w=\frac{x_1}{t}$. When $k=0$, 
\begin{align*}
    p_d(t,x,y)y^{1-d}=\frac{1}{t^d}e^{-(\lambda+w)}\psi_d(\lambda w).
\end{align*}
\begin{enumerate}
    \item If $\lambda w\leq 1$, then this term is bounded by
    \begin{align*}
        \frac{C}{t^d}e^{-(\lambda+w)}\leq \frac{C}{t^d} e^{-(\sqrt{\lambda}-\sqrt{w})^2}=\frac{C}{t^d}e^{-\frac{(\sqrt{x}-\sqrt{x_1})^2}{t}}.
    \end{align*}

    \item If $\lambda w\geq 1$, using the asymptotic expansion $\psi_d(z)\sim \frac{z^{\frac{1}{4}-\frac{d}{2}}e^{2\sqrt{z}}}{\sqrt{4\pi}}$,
    \begin{align*}
        \frac{1}{t^d}e^{-(\lambda+w)}\psi_d(\lambda w)\sim \frac{1}{\sqrt{4\pi}t^d}e^{-(\sqrt{\lambda}-\sqrt{w})^2}(\lambda w)^{\frac{1}{4}-\frac{d}{2}}.
    \end{align*}
    The right hand side is bounded by
    $\frac{1}{\sqrt{4\pi }t^d}e^{-(\sqrt{\lambda}-\sqrt{w})^2}$
    if $d\geq\frac{1}{2}$, and 
    bounded by 
    $\frac{L^{\frac{1}{2}-d}}{\sqrt{4\pi t}}e^{-(\sqrt{\lambda}-\sqrt{w})^2}$ if $0<d\leq\frac{1}{2}$.
    Overall, 
    \begin{align*}
        p_d(t,x,y)\leq\frac{C}{t^{d+\frac{1}{2}}}e^{-\frac{(\sqrt{x}-\sqrt{x_1})^2}{t}}\frac{1}{y^{d-1}}.
    \end{align*}
\end{enumerate}
The estimates for higher order derivatives are proved similarly.   

The second estimate is not hard to get since we can repeatedly use that for $\forall b\geq 0$, \[\frac{1}{t^b}e^{-\frac{(y-y_{1})^{2}}{4ct}}\leq\frac{C_b}{t^b}\frac{t^b}{|y-y_1|^{2b}}\leq C_{b,\alpha}.\]
\end{proof}

\subsection{Proof of Theorem \ref{heat kernel}} 
So far we have constructed the local heat kernel $q^{t}_{ij}(\cdot,q)$ for $\forall q\in\mathring{P}$. By the classical elliptic theory, there exists the Dirichlet heat kernel $q_t^U$ in $P_U$. We patch them together by defining the global kernel parametrix:
\begin{gather}\label{global_parametrix}
    q_{t}\left(d_{1},d_{2},l_{1},l_{2}\right)=\sum_{i=0}^4\sum_{j}{}\phi_{i,j,\epsilon}\left(d_{1},d_{2}\right)q_{t}^{ij}\left((\psi_{ij,\epsilon}(d_{1}),\psi_{ij,\epsilon}(d_{2}),\psi_{ij,\epsilon}(l_{1}),\psi_{ij,\epsilon}(l_{2})\right)\chi_{ij,\epsilon}\left(l_{1},l_{2}\right)\cdot|\text{det} \psi_{ij}(l_{1},l_{2})|\\
    +\psi q_t^U(d_1,d_2,l_1,l_2)(1-\varphi_U).
\end{gather}
Now set
\begin{align*}
    e_{t}(d_{1},d_{2}.l_{1},l_{2})=(\partial_{t}-L)q_{t}\left(d_{1},d_{2},l_{1},l_{2}\right).
\end{align*}
Again 
\begin{align*}
    e_{t}(d_{1},d_{2},l_{1},l_{2})=\left(\sum_{i=0}^{3}\underset{j}{\sum}\phi_{i,j,\epsilon}(\widetilde{L^{ij}}-L^{ij})q_{i,j}^{t}\chi_{i,j,\epsilon}\right)+\left(\sum_{i=0}^{3}\underset{j}{\sum}[\phi_{i,j,\epsilon},L]q_{i,j}^{t}\chi_{i,j,\epsilon}\right)+[\psi,L]q_t^U(1-\varphi_U).
\end{align*}

By construction, $e_{t}(d_{1},d_{2},l_{1},l_{2})$ is supported on $P_{U}\times P\times[0,\infty)$. To be more precise, the support of $e_{t}(d_{1},d_{2},l_{1},l_{2})$ is in an off-diagonal region: $d\left((d_{1},d_{2}),(l_{1},l_{2})\right)>\alpha$ for some $\alpha$, hence for $\forall T>0$, in $[0,T]$ it is bounded and by $Ce^{-\frac{C_\alpha}{t}}$ for some $C_\alpha>0$ using the estimate in Proposition \ref{alpha}. We let $A^te_t$ be the solution to the inhomogeneous problem
\begin{align*}
    (\partial_{t}-L)A^te_t(\cdot,l_1,l_2)=e_t(\cdot,l_1,l_2)\  \text{in}\ P\times[0,T]\ \text{with}\ A^0(\cdot, l_1, l_2)=0,
\end{align*}
Thus the global heat kernel is given by 
\begin{align*}
        H_t\left(d_1,d_2,l_{1},l_{2}\right)=q_{t}\left(d_{1},d_{2},l_{1},l_{2}\right)-A^te_t\left(d_1,d_2,l_{1},l_{2}\right).
\end{align*}

Next we investigate the regularity of the heat kernel $H_t$. If $L$ is Kimura operator, i.e., all the edges of $P$ are of Kimura type, as studied in \cite{EM}, Theorem \ref{heat_equation} can be extended to the case when $f\in C^{k,\gamma}(P), g\in C^{k,\gamma}(P\times [0,T])$, there exists a unique solution $w\in C^{k,2+\gamma}(P\times [0,T])$. For Cauchy problem, since $C^{k,2+\gamma}(P)\subset C^{k+1,\gamma}$(P), we can use bootstrap argument and the semi-group property of the solution to show that actually $w\in C^\infty(P\times (0,T])$. 

For $\forall t_0\in (0,T]$,
fix $q\in\mathring{P}$, the estimation in Proposition \ref{iteration} shows that the local heat kernel $q_{t_0}(\cdot, q)$ are in local $C^{0,\gamma}$ spaces, and the perturbation term
$A^te_t(\cdot,q)$ is also in $C^{0,\gamma}(P)$. Thus $H_{t_0}(\cdot,q)\in C^{0,\gamma}(P)$. We apply the regularity statement above to the Cauchy problem with initial condition  $H_{t_0}(\cdot, q)$, giving that $H_t(\cdot, q)\in C^\infty(P\times (t_0, T])$. Letting $t_0\to 0, T\to\infty$, then we have $(\cdot, t)\mapsto H_t(\cdot, q)\in C^\infty(P\times (0, \infty))$ for $\forall q\in\mathring{P}$ in the Kimura case.
 
When $L$ has a mixed type of boundary conditions, fix $q\in\mathring{P}$, we choose a neighborhood $U_Q$ of all the quadratic edge with $q\notin U_Q$ and $\chi\in C^\infty(U^c_Q)$ so that $\chi\equiv 1$ away from $U_Q$, $\chi(q)=1$. Let $K$ be a Kimura operator on $P$ so that the transverse/tangent boundary conditions align with $L$. We define the new operator 
\begin{gather*}
    \tilde{L}=\chi L+(1-\chi)K,
\end{gather*}
such defined $\tilde{L}$ is a Kimura operator, $\tilde{L}=L$ away from $U_ Q$, particularly $\tilde{L}(q)=L(q)$.
Denote the heat kernel of $\tilde{L}$ by $\tilde{H}_t$, we have 
\begin{gather}\label{h_t}
    (\partial_t-L)(H_t-\tilde{H}_t)(\cdot,q)=(L-\tilde{L})\tilde{H}_t(\cdot, q)\\
    \underset{t\to 0}{\lim}(H_t-\tilde{H}_t)(\cdot,q)=0.
\end{gather}
We have shown that $\tilde{H}_t\in C^\infty(P\times (0,\infty))$.
Since $L=\tilde{L}$ at $q$, the support of $(L-\tilde{L})\tilde{H}_t(\cdot, q)$ is away from $q$, so $(t,\cdot)\mapsto(L-\tilde{L})\tilde{H}_t(\cdot, q)$ is smooth and its high order derivatives are bounded by $e^{-\frac{C}{t}}$ for some $C>0$. Moreover since $q\in\mathring{P}$, when constructing the parametrix \ref{global_parametrix} of global heat kernel $H_t(\cdot,q), \tilde{H}_t(\cdot,q)$, we can choose the same Dirichlet heat kernel in a vicinity of $q$. Then the two remaining perturbation terms $A^te_t(\cdot, q), A^t\tilde{e}_t(\cdot, q)$ are both in $C^{k,\gamma}(P\times [0,T])$ for $k\in\mathbb{N}$. This roughly shows that $(H_t-\tilde{H}_t)(\cdot,q)\in C^{k,\gamma}(P\times [0,T])$.
Therefore by Theorem \ref{heat kernel} $H_t-\tilde{H}_t\in C^\infty(P\times [0,T])$. The argument above works for $\forall T>0$, so indeed we showed that $(\cdot, t)\mapsto H_t(\cdot, q)\in C^\infty(P\times (0,\infty))$.

Finally for the regularity of the forward variable, for $p\in P^{reg}$, $H_t(p,\cdot)$ is a solution to the Kolmogorov forward equation $(\partial_t-L^*)H_t(p,\cdot)=0$. Therefore by standard hypoellicity results for parabolic operators (\cite{stroock_2008} section 3.4.2), $H_t(p,\cdot)$ is smooth in $\mathring{P}$.

\section{Appendix} 

\subsection{Kimura kernel Estimates} 
We now provide several kernel estimates which are crucial in the construction of the local heat kernel at boundary points.
Recall that
\begin{gather*}
    p^d_t(x,y)=\left(\frac{y}{t}\right)^{d}e^{-\frac{x+y}{t}}\psi_{d}(\frac{xy}{t^{2}})\frac{1}{y},\ d>0,\\
    p^0_t(x,y)=e^{-\frac{x}{t}}\delta(y)+\left(\frac{x}{t}\right)e^{-\frac{x+y}{t}}\psi_2(\frac{xy}{t^{2}})\frac{1}{t},\ d=0.
\end{gather*}

\begin{lemma}\label{estimate_p}
There exists a constant $C_d>0$ uniformly bounded for $d\in [0,B]$ depending on $d$ such that 
\begin{gather}\label{bound-kernel}
    p_d(t,x,y)\leq\frac{C_d}{\sqrt{yt}}e^{-\frac{(\sqrt{x}-\sqrt{y})^2}{2t}},\ d\geq\frac{1}{2}\ \text{or}\  d=0, y\neq 0,\\
    p_d(t,x,y)\leq C_d\max\left(\frac{1}{t^d}e^{-\frac{(\sqrt{x}-\sqrt{y})^2}{t}}y^{d-1},\frac{1}{\sqrt{yt}}e^{-\frac{(\sqrt{x}-\sqrt{y})^2}{2t}}\right),\ d<\frac{1}{2}.
\end{gather}
\end{lemma}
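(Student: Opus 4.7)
The plan is to reduce both estimates to a single uniform bound on $\psi_d(z)e^{-2\sqrt z}$, and then absorb the residual algebraic factors using half of the Gaussian weight. Writing $\psi_d(z)=z^{(1-d)/2}I_{d-1}(2\sqrt z)$ and splitting $e^{-(x+y)/t}=e^{-(\sqrt x-\sqrt y)^2/t}\,e^{-2\sqrt{xy}/t}$, the kernel factors as
\[
p_d(t,x,y)\;=\;\frac{(y/t)^{d}}{y}\,e^{-(\sqrt x-\sqrt y)^2/t}\,\bigl[\psi_d(z)\,e^{-2\sqrt z}\bigr],\qquad z=\frac{xy}{t^2}.
\]
The key step is the uniform bound
\[
\psi_d(z)\,e^{-2\sqrt z}\;\le\;\frac{C_d}{(1+2\sqrt z)^{d-1/2}},\qquad z\ge 0,
\]
which follows from combining the near-zero asymptotic $I_{d-1}(2\sqrt z)\sim(\sqrt z)^{d-1}/\Gamma(d)$ (giving $\psi_d(0)=1/\Gamma(d)$) with the large-argument expansion $I_{d-1}(2\sqrt z)e^{-2\sqrt z}\sim(4\pi\sqrt z)^{-1/2}$; the $d=0$ exception is handled via $I_{-1}=I_1$. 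Uniformity in $d\in[0,B]$ is automatic because $1/\Gamma(d)$ and the implicit constants in the Bessel asymptotics vary continuously over that compact interval.

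For $d\ge 1/2$ the denominator is at least one, so $\psi_d(z)e^{-2\sqrt z}\le C_d$; the target inequality $p_d\le C_d(yt)^{-1/2}e^{-(\sqrt x-\sqrt y)^2/(2t)}$ then reduces to showing that the residual factor $(y/t)^{d-1/2}(1+2\sqrt{xy}/t)^{1/2-d}e^{-(\sqrt x-\sqrt y)^2/(2t)}$ is uniformly bounded. In the regime $\sqrt{xy}\ge t$ this simplifies to $(y/x)^{(2d-1)/4}e^{-(\sqrt x-\sqrt y)^2/(2t)}$, which is controlled by the substitution $u=\sqrt{y/x}$: for $u\le 1$ it is trivial, and for $u>1$ the hypothesis $\sqrt{xy}\ge t$ yields $x(u-1)^2/t\ge(u-1)^2/u$, providing enough exponential decay to dominate any polynomial in $u$. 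In the complementary regime $\sqrt{xy}<t$ the factor reduces to $(y/t)^{d-1/2}e^{-(\sqrt x-\sqrt y)^2/(2t)}$; if $y\le t$ this is trivial, and if $y>t$ then $\sqrt x\le t/(2\sqrt y)<\sqrt y/2$ forces $(\sqrt x-\sqrt y)^2\ge y/4$, and the exponential swallows the polynomial. The case $d=0$, $y\ne 0$ is covered by the same argument since the continuous part of $p^0_t$ corresponds via $I_{-1}=I_1$ to an effective kernel of the $d\ge 1/2$ type.

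For $0<d<1/2$ the bound splits at $z=1$: in the regime $z\le 1$ one uses $\psi_d(z)e^{-2\sqrt z}\le C_d$ directly, yielding the first branch $C_d\,t^{-d}y^{d-1}e^{-(\sqrt x-\sqrt y)^2/t}$ of the claimed maximum; in the regime $z\ge 1$ one uses $\psi_d(z)e^{-2\sqrt z}\le C_d\,z^{(1-2d)/4}$ and, after absorbing the algebraic factor $(y/x)^{(2d-1)/4}$ with half of the Gaussian exactly as in the preceding paragraph, recovers the second branch $C_d(yt)^{-1/2}e^{-(\sqrt x-\sqrt y)^2/(2t)}$. The main obstacle is the case analysis coupling the geometry of $(x,y,t)$ (encoded by the threshold $\sqrt{xy}\gtrless t$) to the available Gaussian decay; once that is handled carefully, the uniformity of $C_d$ on $[0,B]$ comes for free from the continuous dependence of the Bessel asymptotic constants.
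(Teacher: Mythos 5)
Your proposal is correct and follows essentially the same route as the paper's proof: both split at $z=xy/t^2=1$, use the small- and large-argument asymptotics of $\psi_d$ (equivalently of $I_{d-1}$), and absorb the residual algebraic factor $(y/x)^{(2d-1)/4}$ into half of the Gaussian using $\sqrt{xy}\ge t$; your packaging of the two regimes into the single bound $\psi_d(z)e^{-2\sqrt z}\le C_d(1+2\sqrt z)^{1/2-d}$ is only a cosmetic reorganization. The one place to tighten is $d=0$: the continuous part of $p^0_t$ equals $(x/y)\,p^2_t(x,y)$, so the factor to absorb is a power of $x/y$ rather than $y/x$, which the same (symmetric) argument handles but which your appeal to ``the $d\ge 1/2$ type'' glosses over.
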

\begin{proof}
Using the kernel formula,
\begin{align*}
    p^d_t(x,y)y=\left(\frac{y}{t}\right)^{d}e^{-\frac{x+y}{t}}\psi_{d}(\frac{xy}{t^{2}}).
\end{align*}
Let $\lambda=\frac{x}{t},w=\frac{y}{t}$.
First we treat the case when $d\geq\frac{1}{2}$ and want to show that
\begin{align}\label{formula}
    w^{d-\frac{1}{2}}e^{-(\lambda+w)}\psi_d(\lambda w)\leq Ce^{-\frac{(\sqrt{\lambda}-\sqrt{w})^2}{2}}.
\end{align}
\begin{enumerate}
    \item If $\lambda w\leq 1$, then $\psi_d(\lambda w)\leq C_d$ since $\psi_d$ is smooth and $e^{-(\lambda+w)}\leq e^{-(\sqrt{\lambda}-\sqrt{w})^2}$, so it remains to show that
    \begin{align}\label{d}
       w^{d-\frac{1}{2}}=O(e^{\frac{(\sqrt{\lambda}-\sqrt{w})^2}{2}}).
    \end{align}
If $w\leq 1$, $w^{d-\frac{1}{2}}\leq 1\leq e^{\frac{(\sqrt{\lambda}-\sqrt{w})^2}{2}}$. If $w>1$, then $w\leq \lambda$ since $\lambda w\leq 1$, so 
\[e^{\frac{(\sqrt{\lambda}-\sqrt{w})^2}{2}}\geq e ^{\frac{\left(\sqrt{\frac{1}{w}}-\sqrt{w}\right)^2}{2}}=\Omega(w^{d-\frac{1}{2}}).\]

\item If $\lambda w\geq 1$, using the asymptotic expansion $\psi_d(z)\sim \frac{z^{\frac{1}{4}-\frac{d}{2}}e^{2\sqrt{z}}}{\sqrt{4\pi}}$,
\begin{align*}
    w^{d-\frac{1}{2}}e^{-(\lambda+w)}\psi_d(\lambda w)=O\left(e^{-(\sqrt{\lambda}-\sqrt{w})^2}(\frac{w}{\lambda})^{\frac{d}{2}-\frac{1}{4}}\right).
\end{align*}
It remains to show that
\begin{gather}\label{frac_estimate}
    (\frac{w}{\lambda})^{\frac{d}{2}-\frac{1}{4}}=O(e^{\frac{(\sqrt{\lambda}-\sqrt{w})^2}{2}}).
\end{gather}
Let $k=\frac{w}{\lambda}$, then $\lambda\sqrt{k}\geq 1$,
\begin{align}\label{omega}
    e^{\frac{(\sqrt{\lambda}-\sqrt{w})^2}{2}}=e^{\frac{\lambda(\sqrt{k}-1)^2}{2}}\geq e^{\frac{\frac{1}{\sqrt{k}}(\sqrt{k}-1)^2}{2}}=\Omega(k^{\frac{d}{2}-\frac{1}{4}}).
\end{align}
\end{enumerate}
In the remaining case when $0< d\leq\frac{1}{2}$,
\begin{enumerate}
    \item If $\lambda w\leq 1$, $p_d(t,x,y)y^{1-d}t^d=e^{-(\lambda+w)}\psi_d(\lambda w)\leq\psi_d(1) e^{-(\sqrt{\lambda}-\sqrt{w})^2}$
    \item If $\lambda w\geq 1$, again we need to show that (\ref{formula}), for which 
we can apply the same estimate (\ref{omega}).
\end{enumerate}
In the last case when $d=0$, for $y\neq 0$,
\[\sqrt{yt}p_0(t,x,y)=\frac{x\sqrt{y}}{t^{\frac{3}{2}}}e^{-\frac{x+y}{t}}\psi_2(\frac{xy}{t^2})=\lambda\sqrt{w}e^{-(\lambda+w)}\psi_2(\lambda w).\]
\begin{enumerate}
    \item If $\lambda w\leq 1$, then the term above is bounded by 
    \begin{align*}
        C\sqrt{\lambda}e^{-(\lambda+w)}=O(e^{-\frac{(\sqrt{\lambda}-\sqrt{w})^2}{2}}).
    \end{align*}
    \item If $\lambda w\geq 1$, then
    \begin{align*}
        \lambda\sqrt{w}e^{-(\lambda+w)}\psi_2(\lambda w)\sim \left(\frac{\lambda}{w}\right)^{\frac{1}{4}}e^{-(\sqrt{\lambda}-\sqrt{w})^2}=O(e^{-\frac{(\sqrt{\lambda}-\sqrt{w})^2}{2}}).
    \end{align*}
\end{enumerate}
\end{proof}

\begin{lemma}
There exists a constant $C_d>0$ uniformly bounded for $d\in[0,B]$ such that 
\begin{gather*}
   \sqrt{x}\partial_xp_d(t,x,y) \leq\frac{C_d}{t\sqrt{y}}e^{-\frac{(\sqrt{x}-\sqrt{y})^2}{2t}},\quad d\geq\frac{1}{2}\ \text{or}\ d=0, y\neq 0,\\
   \sqrt{x}\partial_xp_d(t,x,y) \leq \frac{C_d}{\sqrt{t}}\max\left(\frac{1}{t^d}e^{-\frac{(\sqrt{x}-\sqrt{y})^2}{t}}y^{d-1},\frac{1}{\sqrt{yt}}e^{-\frac{(\sqrt{x}-\sqrt{y})^2}{2t}}\right),\quad d<\frac{1}{2}.
\end{gather*}
\end{lemma}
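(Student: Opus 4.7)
The plan is to mimic the two-regime analysis of Lemma~\ref{estimate_p}, while extracting the cancellation produced by the $x$-derivative. Differentiating the series $\psi_d(z)=\sum_{k\geq 0} z^k/(k!\,\Gamma(d+k))$ termwise gives the recurrence $\psi_d'(z)=\psi_{d+1}(z)$, and substituting this into the kernel formula yields the clean identity
\begin{equation*}
\partial_x p_d(t,x,y) \;=\; \frac{1}{t}\bigl(p_{d+1}(t,x,y)-p_d(t,x,y)\bigr).
\end{equation*}
Multiplying by $\sqrt{x}$ and factoring, the claim reduces to bounding
\begin{equation*}
\sqrt{x}\,\partial_x p_d \;=\; \frac{\sqrt{x}}{t\,y}\Bigl(\frac{y}{t}\Bigr)^{d} e^{-(x+y)/t}\bigl[(y/t)\psi_{d+1}(z)-\psi_d(z)\bigr], \qquad z=\frac{xy}{t^{2}}.
\end{equation*}
Set $\lambda=x/t$ and $w=y/t$, and split the analysis according to whether $z=\lambda w\leq 1$ or $\lambda w\geq 1$, as in Lemma~\ref{estimate_p}.

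In the near-singular regime $\lambda w\leq 1$, both $\psi_d(z)$ and $\psi_{d+1}(z)$ are uniformly bounded for $d\in[0,B]$, so the bracket is controlled by $C(1+w)$. Using $-(\lambda+w)\leq -(\sqrt{\lambda}-\sqrt{w})^{2}/2-\sqrt{\lambda w}$ together with the same sub-case split employed in Lemma~\ref{estimate_p} (treating $w\leq 1$ and $w\geq 1$ separately, with the latter forcing $\lambda\leq 1$), the target bound follows by direct computation.

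The substantive work is in the regime $\lambda w\geq 1$, where the cancellation must be exploited. Using the asymptotic $\psi_d(z)\sim z^{1/4-d/2}e^{2\sqrt{z}}/\sqrt{4\pi}$, the bracket factorises as
\begin{equation*}
(y/t)\psi_{d+1}(z)-\psi_d(z) \;\sim\; \frac{z^{-1/4-d/2}e^{2\sqrt{z}}}{\sqrt{4\pi}}\,\sqrt{w}\,(\sqrt{w}-\sqrt{\lambda}),
\end{equation*}
revealing an extra factor $|\sqrt{w}-\sqrt{\lambda}|=|\sqrt{y}-\sqrt{x}|/\sqrt{t}$. Combined with the telescoping identity $e^{-(x+y)/t+2\sqrt{z}}=e^{-(\sqrt{x}-\sqrt{y})^{2}/t}$ and the standard estimate $|\sqrt{x}-\sqrt{y}|\,e^{-(\sqrt{x}-\sqrt{y})^{2}/t}\leq C\sqrt{t}\,e^{-(\sqrt{x}-\sqrt{y})^{2}/(2t)}$, this cancellation supplies precisely the $\sqrt{t}$ gain needed to upgrade the naive $t^{-3/2}$ bound coming from differentiating a heat-type kernel into the sharper $t^{-1}$ appearing in the claim.

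The main obstacle is controlling the leftover algebraic weight, which after collecting exponents reduces to $(\lambda/w)^{1/4-d/2}$; this blows up as $\lambda/w\to 0$ when $d>1/2$ and as $\lambda/w\to\infty$ when $d<1/2$. As in the proof of Lemma~\ref{estimate_p}, I absorb it by reserving a fraction of $e^{-(\sqrt{\lambda}-\sqrt{w})^{2}/2}$ and using the regime constraint $\lambda w\geq 1$ to derive extreme-case estimates such as $w^{d-1/2}e^{-w/4}\leq C$ and $\lambda^{1/2-d}e^{-\lambda/4}\leq C$. The sub-critical case $0<d<1/2$ is handled identically after substituting the alternate bound on $\psi_d$ used in Lemma~\ref{estimate_p}, while the case $d=0$ (under the hypothesis $y\neq 0$) reduces to the $d\geq 1/2$ argument via the explicit formula $p_0(t,x,y)=(x/t)\,e^{-(x+y)/t}\,\psi_2(xy/t^{2})/t$, in which no $\delta$-term contributes.
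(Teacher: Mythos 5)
Your proposal is correct and follows essentially the same route as the paper: the same reduction to bounding $\tfrac{\sqrt{\lambda}}{\sqrt{w}}\,w^{d}e^{-(\lambda+w)}\lvert w\psi_d'(\lambda w)-\psi_d(\lambda w)\rvert$ in the two regimes $\lambda w\le 1$ and $\lambda w\ge 1$, the same asymptotic extraction of the cancellation factor $(\sqrt{w}-\sqrt{\lambda})$, and the same absorption of $(\lambda/w)^{1/4-d/2}$ into a fraction of the Gaussian. Your packaging of the derivative as $\partial_x p_d=\tfrac{1}{t}(p_{d+1}-p_d)$ via $\psi_d'=\psi_{d+1}$ is only a cosmetic rewriting of the paper's direct differentiation (the paper uses the same recurrence in its Lemma on $(y\partial_y)^k p_d$), though your explicit treatment of the $d=0$ case and the simpler sub-case split for $\lambda w\le 1$ are welcome clarifications.
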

\begin{proof}
Let $\lambda=\frac{x}{t}, w=\frac{y}{t}$, then
\begin{align}
    \sqrt{xy}t\partial_{x}p_d(t,x,y)=\frac{\sqrt{x}}{\sqrt{y}}(\frac{y}{t})^de^{-\frac{x+y}{t}}\left|\frac{y}{t}\psi'_d(\frac{xy}{t^2})-\psi_d(\frac{xy}{t^2})\right|=\frac{\sqrt{\lambda}}{\sqrt{w}}w^de^{-(\lambda+w)}|w\psi'_d(\lambda w)-\psi_d(\lambda w)|.
\end{align}
We want to show that 
\begin{align*}
    \frac{\sqrt{\lambda}}{\sqrt{w}}w^de^{-(\lambda+w)}|w\psi'_d(\lambda w)-\psi_d(\lambda w)|=O (e^{\frac{(\sqrt{\lambda}-\sqrt{w})^2}{2}}).
\end{align*}
\begin{enumerate}
    \item If $\lambda w\leq 1$, it suffices to show that
    \[\frac{\sqrt{\lambda}}{\sqrt{w}}w^d=O (e^{\frac{(\sqrt{\lambda}-\sqrt{w})^2}{2}}).\]
    Let $k=\frac{\lambda}{w}$, then $w\leq\frac{1}{\sqrt{k}}$, fix $k$,
    \begin{align}\label{log}
    \text{ln}k+2d\text{ln}w-(\sqrt{k}-1)^2w
    \end{align}
    has maximal value at $min(\frac{1}{\sqrt{k}},\frac{2d}{(\sqrt{k}-1)^2})$. If $\frac{1}{\sqrt{k}}\leq\frac{2d}{(\sqrt{k}-1)^2}$, then $M\leq k\leq N$ for some $M,N>0$, so \[lnk+2dlnw-(\sqrt{k}-1)^2w\leq (1-d)lnk-\frac{(\sqrt{k}-1)^2}{\sqrt{k}}\]
    is bounded above. If $\frac{2d}{(\sqrt{k}-1)^2}\leq\frac{1}{\sqrt{k}}$, then 
    \[lnk+2dlnw-(\sqrt{k}-1)^2w\leq lnk-2dln(\sqrt{k}-1)^2+(2dln2d-2d).\]
    The right-hand side tends to $-\infty$ as $k\to 0$, and tends to $-\infty$ if $d>\frac{1}{2}$ or a finite number if $d=\frac{1}{2}$ as $k\to\infty$, so it is bounded above.
    \item If $\lambda w\geq 1$, then 
    \begin{align*}
        \frac{\sqrt{\lambda}}{\sqrt{w}}w^de^{-(\lambda+w)}|w\psi'_d(\lambda w)-\psi_d(\lambda w)|\sim (\frac{\lambda}{w})^{\frac{1}{4}-\frac{d}{2}}e^{-(\sqrt{\lambda}-\sqrt{w})^2}(\sqrt{w}-\sqrt{\lambda}).
    \end{align*}
\end{enumerate}
In general,
\begin{enumerate}
    \item If $\lambda w\leq 1$, $p_d(t,x,y)y^{1-d}t^{d+\frac{1}{2}}=\sqrt{\lambda}e^{-(\lambda+w)}|w\psi'_d(\lambda w)-\psi_d(\lambda w)|=O( e^{-\frac{(\sqrt{\lambda}-\sqrt{w})^2}{2}})$ by (\ref{d}).
    \item If $\lambda w\geq 1$, again we need to show that (\ref{formula}), for which 
we can apply the same estimate (\ref{omega})
\end{enumerate}
\end{proof}

\begin{lemma}\label{second_estimate}
There exists a constant $C_d>0$ uniformly bounded for $d\in [0,B]$ such that 
\begin{align*}
   x\partial^2_xp^d_t(x,y) & \leq\frac{C_d}{t\sqrt{yt}}e^{-\frac{(\sqrt{x}-\sqrt{y})^2}{2t}},\ d\geq\frac{1}{2}\ \text{or}\ d=0, y\neq 0,\\
   x\partial^2_xp^d_t(x,y) & \leq\frac{C_d}{t}\max\left(\frac{1}{t^d}e^{-\frac{(\sqrt{x}-\sqrt{y})^2}{t}}y^{d-1},\frac{1}{\sqrt{yt}}e^{-\frac{(\sqrt{x}-\sqrt{y})^2}{2t}}\right),\ d<\frac{1}{2}.
\end{align*}
\end{lemma}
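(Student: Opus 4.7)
The plan is to mimic the structure of the two preceding lemmas. I would begin by computing $x\partial_x^2 p_d(t,x,y)$ explicitly from the closed form
\[p_d(t,x,y) = \frac{1}{y}\left(\frac{y}{t}\right)^d e^{-(x+y)/t}\psi_d\!\left(\frac{xy}{t^2}\right).\]
Differentiating twice in $x$ and multiplying by $x$, the combination of $e^{-x/t}$ and $\psi_d(xy/t^2)$ yields
\[x\partial_x^2 p_d(t,x,y) = \frac{1}{t^2}\,\lambda w^{d-1} e^{-(\lambda+w)}\bigl[\psi_d(\xi) - 2w\,\psi'_d(\xi) + w^2\psi''_d(\xi)\bigr],\]
in the rescaled variables $\lambda = x/t$, $w = y/t$, $\xi = \lambda w$. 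After multiplying by $t\sqrt{yt} = t^2\sqrt{w}$, the target estimate reduces to showing
\[\lambda w^{d-1/2}e^{-(\lambda+w)}\bigl|\psi_d(\xi) - 2w\,\psi'_d(\xi) + w^2\psi''_d(\xi)\bigr| \le C_d\, e^{-(\sqrt{\lambda}-\sqrt{w})^2/2}.\]

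Next I would split into the same two regimes used for the previous two lemmas. In the regime $\xi = \lambda w \le 1$, the functions $\psi_d,\psi'_d,\psi''_d$ are uniformly bounded for $d\in[0,B]$ by smoothness at the origin, so the left-hand side is dominated by $C\lambda w^{d-1/2}(1+w+w^2)e^{-(\lambda+w)}$. Writing $e^{-(\lambda+w)} = e^{-(\sqrt{\lambda}-\sqrt{w})^2/2}e^{-(\sqrt{\lambda}+\sqrt{w})^2/2}$, the task reduces to bounding $\lambda w^{d-1/2}(1+w+w^2)e^{-(\sqrt{\lambda}+\sqrt{w})^2/2}$; this is handled exactly as the bound $w^{d-1/2} = O(e^{(\sqrt{\lambda}-\sqrt{w})^2/2})$ was handled in the proof of the previous lemma. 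In the regime $\xi \ge 1$, I would substitute the asymptotic $\psi_d(z)\sim z^{1/4-d/2}e^{2\sqrt{z}}/\sqrt{4\pi}$, together with the differentiated expansions $\psi'_d(z)\sim z^{-1/4-d/2}e^{2\sqrt{z}}/\sqrt{4\pi}$ and $\psi''_d(z)\sim z^{-3/4-d/2}e^{2\sqrt{z}}/\sqrt{4\pi}$ obtained by formal differentiation of the leading-order exponential factor. Using $e^{-(\lambda+w)+2\sqrt{\lambda w}} = e^{-(\sqrt{\lambda}-\sqrt{w})^2}$, each of the three contributions becomes a power of $\lambda/w$ times $e^{-(\sqrt{\lambda}-\sqrt{w})^2}$, and the bound $(w/\lambda)^{\alpha}=O(e^{(\sqrt{\lambda}-\sqrt{w})^2/2})$ used in the preceding lemma (via the argument around (\ref{omega})) finishes the case $d\ge 1/2$.

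The case $d<1/2$ is treated in parallel with the corresponding case of the preceding two lemmas: the factor $w^{d-1/2}$ is no longer bounded near $w=0$, so one replaces the Gaussian $e^{-(\sqrt{\lambda}-\sqrt{w})^2/2}$ by the slightly stronger $e^{-(\sqrt{\lambda}-\sqrt{w})^2}$ in this small-$w$ regime, picking up the extra factor $t^{-d}y^{d-1}$ that appears in the ``max'' term of the statement. No new ideas are required beyond the rescaling already in use. The main obstacle is purely bookkeeping: the three summands $\psi_d(\xi), -2w\psi'_d(\xi), w^2\psi''_d(\xi)$ come with different powers of $(\lambda,w)$ in their leading behavior at $\xi\to\infty$, so one has to check each term separately against the same target bound and verify, using the scaling identity $(\sqrt{\lambda}-\sqrt{w})^2+(\sqrt{\lambda}+\sqrt{w})^2=2(\lambda+w)$, that the $\psi''_d$ contribution (the most singular in $\xi$) still fits within the desired Gaussian envelope.
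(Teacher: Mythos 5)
Your framework (rescaling to $\lambda=x/t$, $w=y/t$, splitting at $\lambda w=1$, using the Bessel-type asymptotics of $\psi_d$) is exactly the paper's, and your explicit formula
\[
x\partial_x^2 p_d = \frac{1}{t^2}\,\lambda w^{d-1}e^{-(\lambda+w)}\bigl[\psi_d(\xi)-2w\psi_d'(\xi)+w^2\psi_d''(\xi)\bigr]
\]
is correct (the paper's different-looking bracket $(\lambda+w)\psi_d-2\lambda w\psi_d'-dw\psi_d'$ is obtained from yours by the identity $z\psi_d''+d\psi_d'=\psi_d$, i.e.\ $w^2\psi_d'' = (w/\lambda)(\psi_d-d\psi_d')$).

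However, there is a genuine gap in the regime $\xi\ge 1$, and it is the one that actually carries the weight of the lemma. It is \emph{not} true that each of the three contributions is ``a power of $\lambda/w$ times $e^{-(\sqrt\lambda-\sqrt w)^2}$'': substituting the leading asymptotic, the first summand alone gives
\[
\lambda\,w^{d-\frac12}e^{-(\lambda+w)}\psi_d(\lambda w)\ \sim\ \lambda^{\frac54-\frac d2}w^{-\frac14}\,e^{-(\sqrt\lambda-\sqrt w)^2},
\]
which along $\lambda=w\to\infty$ grows like $\lambda^{1-d/2}\to\infty$ for $d<2$. So your closing plan — ``check each term separately against the same target bound'' — fails. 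What saves the estimate is a cancellation among the three leading asymptotics:
\[
1-2w\,z^{-\frac12}+w^2z^{-1}=1-2\sqrt{w/\lambda}+w/\lambda=\bigl(1-\sqrt{w/\lambda}\bigr)^2=\frac{(\sqrt\lambda-\sqrt w)^2}{\lambda},
\]
which exactly kills the dangerous overall factor $\lambda$ and replaces it with $(\sqrt\lambda-\sqrt w)^2$, a factor that is then absorbed into $e^{-(\sqrt\lambda-\sqrt w)^2}$ at the cost of halving the exponent. This is precisely what the paper exhibits (in the equivalent form $(\lambda+w)-2\sqrt{\lambda w}=(\sqrt\lambda-\sqrt w)^2$, together with the residual $O\!\left(\frac{\lambda+w}{\sqrt{\lambda w}}+1\right)$ terms from the subleading expansion), and only after this cancellation does the remainder reduce to a pure power of $\lambda/w$ to which the $(\ref{omega})$-type bound applies. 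The ``scaling identity'' $(\sqrt\lambda-\sqrt w)^2+(\sqrt\lambda+\sqrt w)^2=2(\lambda+w)$ you invoke at the end is not what is needed here.

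Two smaller omissions: you never address $d=0$, for which $p^0_t$ has a different explicit formula (a $\delta$-part plus a term involving $\psi_2$), so the computation must be redone from that formula as the paper does; and in the $\xi\le1$ regime you should be careful that the uniform bound on $\psi_d'',\psi_d',\psi_d$ near $0$ is indeed uniform in $d\in[0,B]$ (true, but worth saying).
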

\begin{proof}
Let $\lambda=\frac{x}{t}, w=\frac{y}{t}$, then
\begin{align}
    \sqrt{yt}tx\partial^2_{x}p_d(t,x,y)=w^{d-\frac{1}{2}}e^{-(\lambda+w)}[(\lambda+w)\psi_d(\lambda w)-2\lambda w\psi'_d(\lambda w)-dw\psi'_d(\lambda w)].
\end{align}
For $d>\frac{1}{2}$, we want to show that 
\begin{align*}
  w^{d-\frac{1}{2}}e^{-(\lambda+w)}[(\lambda+w)\psi_d(\lambda w)-2\lambda w\psi'_d(\lambda w)-dw\psi_d(\lambda w)]=O (e^{\frac{(\sqrt{\lambda}-\sqrt{w})^2}{2}}).
\end{align*}
\begin{enumerate}
    \item If $\lambda w\leq 1$, it suffices to show that
    \[(\lambda+w+1)w^{d-\frac{1}{2}}=O (e^{\frac{(\sqrt{\lambda}-\sqrt{w})^2}{2}}).\]
    
    \item If $\lambda w\geq 1$, then 
    \begin{align*}
        w^{d-\frac{1}{2}}e^{-(\lambda+w)}[(\lambda+w)\psi_d(\lambda w)-2\lambda w\psi'_d(\lambda w)-dw\psi_d(\lambda w)]\sim (\frac{\lambda}{w})^{\frac{1}{4}-\frac{d}{2}}e^{-(\sqrt{\lambda}-\sqrt{w})^2}[(\sqrt{w}-\sqrt{\lambda})^2+O(\frac{w+\lambda}{\sqrt{\lambda w}}+1)].
    \end{align*}
\end{enumerate}
Next we treat the case when $d<\frac{1}{2}$. 
\begin{enumerate}
    \item If $\lambda w\leq 1$, $p_d(t,x,y)y^{1-d}t^{d+\frac{1}{2}}=\sqrt{\lambda}e^{-(\lambda+w)}|w\psi'_d(\lambda w)-\psi_d(\lambda w)|=O( e^{-\frac{(\sqrt{\lambda}-\sqrt{w})^2}{2}})$ by (\ref{d}).
    \item If $\lambda w\geq 1$, again we need to show that (\ref{formula}), for which 
we can apply the same estimate (\ref{omega}).
\end{enumerate}
For $d=0$, we want to show that 
\begin{align*}
  2\lambda e^{-(\lambda+w)}\left[w^{\frac{3}{2}}\psi'_2(\lambda w)-\sqrt{w}\psi_2(\lambda w)\right]+\lambda^2e^{-(\lambda+w)}\left[w^{\frac{5}{2}}\psi''_2(\lambda w)-2w^{\frac{3}{2}}\psi'_2(\lambda w)+\sqrt{w}\psi_2(\lambda w)\right]=O (e^{\frac{(\sqrt{\lambda}-\sqrt{w})^2}{2}}).
\end{align*}
When $\lambda w\leq 1$, the left hand side is $O((\sqrt{\lambda}+\sqrt{w}+\lambda^{\frac{3}{2}})e^{-(\lambda+w)})=O (e^{\frac{(\sqrt{\lambda}-\sqrt{w})^2}{2}})$. When $\lambda w>1$, the left hand side has asymptotic 
\begin{align*}
    e^{-(\sqrt{\lambda}+\sqrt{w})^2}\left[\frac{1}{(\lambda w)^{\frac{1}{4}}}(\sqrt{w}-\sqrt{\lambda})+\left(\frac{\lambda}{w}(\sqrt{\lambda}-\sqrt{w})^2\right)^{\frac{1}{4}}\right]=O (e^{\frac{(\sqrt{\lambda}-\sqrt{w})^2}{2}}).
\end{align*}
\end{proof}

\begin{lemma}\label{first_estimate}
There exists a constant $C_d>0$ uniformly bounded for $d\in[0,B]$ such that 
\begin{gather}
    \partial_xp_d(t,x,y)  \leq\frac{C_d}{t\sqrt{yt}}e^{-\frac{(\sqrt{x}-\sqrt{y})^2}{2t}},\quad d\geq\frac{1}{2}\ \text{or}\ d=0, y\neq 0,\\
    \partial_xp_d(t,x,y)\leq C_d\max\left(\frac{1}{t^{d+1}}e^{-\frac{(\sqrt{x}-\sqrt{y})^2}{t}}y^{d-1},\frac{1}{t\sqrt{yt}}e^{-\frac{(\sqrt{x}-\sqrt{y})^2}{2t}}\right),\quad d<\frac{1}{2}.
\end{gather}
\end{lemma}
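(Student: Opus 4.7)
The plan is to follow the same $\lambda,w$ scaling strategy used for the two preceding lemmas, with the minor simplification that we no longer carry a $\sqrt{x}$ prefactor. First I would differentiate the closed form
\[
p_d(t,x,y)=\left(\frac{y}{t}\right)^{d}e^{-\frac{x+y}{t}}\psi_{d}\!\left(\frac{xy}{t^{2}}\right)\frac{1}{y}
\]
in $x$. After routine calculation and setting $\lambda=x/t$, $w=y/t$, one obtains the clean identity
\[
t\sqrt{yt}\,\partial_{x}p_{d}(t,x,y)=w^{d-\frac{1}{2}}e^{-(\lambda+w)}\bigl[w\,\psi_{d}'(\lambda w)-\psi_{d}(\lambda w)\bigr].
\]
This is the same right-hand shape that appeared in the previous lemma for $\sqrt{x}\partial_x p_d$, except that the $\sqrt{\lambda/w}$ prefactor is replaced by $w^{-1/2}$, so essentially the same asymptotic analysis will finish the job.

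Second, for the regime $d\ge \tfrac12$ or ($d=0$, $y\ne0$), I would split into the two ranges $\lambda w\le 1$ and $\lambda w\ge 1$. When $\lambda w\le 1$, $\psi_d$ and $\psi_d'$ are uniformly bounded on $[0,1]$ (with constants uniformly controlled in $d\in[0,B]$), so the bracket is $O(1+w)$, and I can absorb the polynomial growth into an exponential using the inequality
\[
w^{d-\frac{1}{2}}(1+w)\,e^{-(\lambda+w)/2}\le C_d \, e^{-(\sqrt{\lambda}-\sqrt{w})^2/2}
\]
which is exactly the $w^{d-1/2}=O(e^{(\sqrt{\lambda}-\sqrt{w})^2/2})$ estimate \eqref{d} already used in the previous two lemmas (with an extra harmless factor of $(1+w)$). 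When $\lambda w\ge 1$, I use $\psi_d(z)\sim \frac{z^{1/4-d/2}e^{2\sqrt{z}}}{\sqrt{4\pi}}$ and the corresponding asymptotic for $\psi_d'$ (obtained by differentiating the expansion) to rewrite the bracket as $O\bigl((\lambda w)^{1/4-d/2}e^{2\sqrt{\lambda w}}(\sqrt{w}-\sqrt{\lambda})\bigr)$, then reduce everything to
\[
\Bigl(\tfrac{w}{\lambda}\Bigr)^{\frac{1}{4}-\frac{d}{2}}e^{-(\sqrt{\lambda}-\sqrt{w})^{2}}\bigl(\sqrt{w}-\sqrt{\lambda}\bigr)
=O\!\left(e^{-(\sqrt{\lambda}-\sqrt{w})^{2}/2}\right)
\]
by the same $k=\lambda/w$ trick \eqref{omega}. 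The $d=0$ case is handled by explicitly writing out
\[
\partial_{x}p_{0}(t,x,y)=-\tfrac{1}{t}e^{-x/t}\delta(y)+\tfrac{1}{t^2}\partial_x\bigl[xe^{-(x+y)/t}\psi_2(xy/t^2)/t\bigr]
\]
(the delta contribution is absorbed by $y\ne 0$), and running the same $\psi_2$ asymptotics as in the $d=0$ line of the second-derivative lemma.

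Third, for $d<\tfrac12$ the difficulty is that $w^{d-1/2}$ blows up as $w\to 0$, so one cannot get the $(\sqrt{yt})^{-1}$ shape uniformly. Instead, exactly as in the $d<\tfrac12$ branches of the two previous lemmas, I renormalize: for $\lambda w\le 1$ I pull out $y^{d-1}/t^{d+1}$ and bound the remainder $\lambda e^{-(\lambda+w)}$ or $e^{-(\lambda+w)}$ by $e^{-(\sqrt{\lambda}-\sqrt{w})^2}$ using $e^{-(\lambda+w)}\le e^{-(\sqrt{\lambda}-\sqrt{w})^2}$, producing the first alternative $\frac{C_d}{t^{d+1}}e^{-(\sqrt{x}-\sqrt{y})^2/t}y^{d-1}$; for $\lambda w\ge 1$ the large-argument asymptotics of $\psi_d$ make $\lambda w$ so large that $(\lambda w)^{1/4-d/2}$ is tame, and I recover the second alternative $\frac{C_d}{t\sqrt{yt}}e^{-(\sqrt{x}-\sqrt{y})^2/(2t)}$. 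Taking the maximum of the two completes the proof.

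The main obstacle is not analytic but bookkeeping: uniformity of $C_d$ for $d\in[0,B]$ across $\lambda w\le 1$ and $\lambda w\ge 1$ (both $\psi_d$ and its first two $d$-derivatives must be controlled uniformly near $d=0$, and the asymptotic remainder in $\psi_d(z)\sim(4\pi)^{-1/2}z^{1/4-d/2}e^{2\sqrt{z}}$ must be controlled uniformly for $z\ge 1$). These are exactly the uniformity statements already invoked in Lemma \ref{estimate_p} and the second/$\sqrt{x}$-derivative lemmas, so no new estimate on $\psi_d$ is required.
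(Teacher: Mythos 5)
Your overall strategy mirrors the paper's proof exactly: differentiate the closed form of $p_d$ to obtain
\[
t\sqrt{yt}\,\partial_{x}p_{d}(t,x,y)=w^{d-\frac{1}{2}}e^{-(\lambda+w)}\bigl[w\,\psi_{d}'(\lambda w)-\psi_{d}(\lambda w)\bigr],
\]
split into $\lambda w\le1$ and $\lambda w\ge1$, use boundedness of $\psi_d,\psi_d'$ near zero and the large-argument asymptotics of $\psi_d,\psi_d'$, and reduce to the arithmetic estimates \eqref{d} and \eqref{omega}. That is precisely the paper's route, and your discussion of the $d<\tfrac12$ renormalization and the $d\ge\tfrac12$ regime is sound.

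However, two of your displayed intermediate claims are wrong as stated. First, the auxiliary inequality $w^{d-\frac12}(1+w)\,e^{-(\lambda+w)/2}\le C_d\,e^{-(\sqrt\lambda-\sqrt w)^2/2}$ is false: since $e^{-(\lambda+w)/2}e^{(\sqrt\lambda-\sqrt w)^2/2}=e^{-\sqrt{\lambda w}}\le1$, this would force $w^{d-\frac12}(1+w)$ to be bounded, which it is not when $w\to\infty$ with $\lambda w\le1$. What you actually need is to keep the full $e^{-(\lambda+w)}$: writing $e^{-(\lambda+w)}e^{(\sqrt\lambda-\sqrt w)^2/2}=e^{-(\lambda+w)/2-\sqrt{\lambda w}}\le e^{-(\lambda+w)/2}$, the surviving factor $e^{-(\lambda+w)/2}$ is what absorbs $w^{d-\frac12}(1+w)$. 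Second, in the $\lambda w\ge1$ regime the bracket $w\psi_d'(\lambda w)-\psi_d(\lambda w)$ is $\sim(4\pi)^{-1/2}(\lambda w)^{1/4-d/2}e^{2\sqrt{\lambda w}}\,(\sqrt w-\sqrt\lambda)/\sqrt\lambda$, not $O\bigl((\lambda w)^{1/4-d/2}e^{2\sqrt{\lambda w}}(\sqrt w-\sqrt\lambda)\bigr)$ — the missing factor $1/\sqrt\lambda$ is not bounded. Carrying it through, the quantity you need to bound is $w^{d/2-1/4}\lambda^{-1/4-d/2}(\sqrt w-\sqrt\lambda)e^{-(\sqrt\lambda-\sqrt w)^2}$, which can still be shown to be $O\bigl(e^{-(\sqrt\lambda-\sqrt w)^2/2}\bigr)$ because $\lambda w\ge1$ controls $\lambda^{-1/4}$, but the reduction you wrote (with exponent $\tfrac14-\tfrac d2$ and no extra $\lambda^{-1/2}$) is not the correct one. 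Both errors are fixable within your framework and do not affect the validity of the overall argument, but the displayed estimates should be corrected before this could be called a proof.
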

\begin{proof}
First compute
\begin{align*}
    yt\partial_{x}p_d(t,x,y)=(\frac{y}{t})^de^{-\frac{x+y}{t}}\left|\frac{y}{t}\psi'_d(\frac{xy}{t^2})-\psi_d(\frac{xy}{t^2})\right|.
\end{align*}
Let $\lambda=\frac{x}{t},w=\frac{y}{t}$. For the case $d\geq\frac{1}{2}$, we want to show that
\begin{align}\label{verify}
    w^{d-\frac{1}{2}}e^{-(\lambda+w)}\left|w\psi'_d(\lambda w)-\psi_d(\lambda w)\right|\leq Ce^{\frac{(\sqrt{\lambda}-\sqrt{w})^2}{2}}.
\end{align}
The proof is similar to the proof above, except that we now use the asymptotic expansion \[\psi_d(z)\sim \frac{z^{\frac{1}{4}-\frac{d}{2}}e^{2\sqrt{z}}}{\sqrt{4\pi}}, \psi'_d(z)\sim \frac{z^{-\frac{1}{4}-\frac{d}{2}}e^{2\sqrt{z}}}{\sqrt{4\pi}}\]
and $\psi'_d$ is also continuous at $0$. The case when $0\leq d\leq\frac{1}{2}$ can be proved similarly.
\end{proof}

\begin{lemma}\label{y_estimate}
For $k\in\mathbb{N}$, there exists a constant $C_d>0$ depending on $k$ uniformly bounded for $d\in[0,B]$ such that 
\begin{gather}
    (y\partial_y)^kp_d(t,x,y)  \leq\frac{C_d}{\sqrt{yt}}e^{-\frac{(\sqrt{x}-\sqrt{y})^2}{2t}},\quad d\geq\frac{1}{2}\ \text{or}\ d=0, y\neq 0,\\
   (y\partial_y)^kp_d(t,x,y)\leq C_d\max\left(\frac{1}{t^{d}}e^{-\frac{(\sqrt{x}-\sqrt{y})^2}{t}}y^{d-1},\frac{1}{\sqrt{yt}}e^{-\frac{(\sqrt{x}-\sqrt{y})^2}{2t}}\right), \quad d<\frac{1}{2}.
\end{gather}
\end{lemma}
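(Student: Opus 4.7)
The plan is to mimic the structure of Lemmas \ref{estimate_p}–\ref{first_estimate}, extending them from the order-zero and order-one $y$-derivatives handled there (implicitly, by symmetry $p_d(t,x,y)$ is symmetric up to the factor $y^{-1}$) to the $k$-th iterated vector field $y\partial_y$. Starting from
\[
p_d(t,x,y)=\Bigl(\tfrac{y}{t}\Bigr)^d e^{-\frac{x+y}{t}}\psi_d\!\left(\tfrac{xy}{t^2}\right)\tfrac{1}{y},
\]
I would set $\lambda=x/t,\ w=y/t$, so that $y\partial_y=w\partial_w$. The key algebraic observation is that $w\partial_w$ maps each of the building blocks into similar blocks: $w\partial_w(w^a)=a\,w^a$, $w\partial_w(e^{-w})=-w\,e^{-w}$, and $w\partial_w\bigl[\psi_d(\lambda w)\bigr]=(\lambda w)\,\psi'_d(\lambda w)$. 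By induction on $k$, therefore,
\[
(y\partial_y)^k p_d(t,x,y)=\frac{w^{d-1}}{t}\,e^{-(\lambda+w)}\!\!\sum_{j+\ell\le k}\!c_{j,\ell}(k,d)\,w^{\ell}(\lambda w)^{j}\psi_d^{(j)}(\lambda w),
\]
for some bounded coefficients $c_{j,\ell}(k,d)$, uniformly in $d\in[0,B]$.

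Next I would estimate each summand exactly as in the proofs of Lemmas \ref{estimate_p} and \ref{first_estimate}, splitting into the two regions $\lambda w\le 1$ and $\lambda w\ge 1$. In the regime $\lambda w\le 1$, the factor $(\lambda w)^j \psi_d^{(j)}(\lambda w)$ is bounded since $\psi_d^{(j)}$ is smooth at $0$, so each summand is dominated by a polynomial in $w$ times $w^{d-1}e^{-(\lambda+w)}/t$, and the argument leading to \eqref{d} (replacing $w^{d-\frac12}$ by $w^{d-\frac12+\ell}$ and estimating extra polynomial factors against $e^{(\sqrt\lambda-\sqrt w)^2/2}$) gives the desired bound. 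In the regime $\lambda w\ge 1$, I use the full asymptotic
\[
\psi_d^{(j)}(z)\sim \frac{z^{-\frac{j}{2}+\frac14-\frac{d}{2}}e^{2\sqrt z}}{\sqrt{4\pi}}, \qquad z\to\infty,
\]
so that $(\lambda w)^{j}\psi_d^{(j)}(\lambda w)\sim (\lambda w)^{\frac{j}{2}+\frac14-\frac{d}{2}}e^{2\sqrt{\lambda w}}/\sqrt{4\pi}$, and after combining with $e^{-(\lambda+w)}=e^{-(\sqrt\lambda-\sqrt w)^2}e^{-2\sqrt{\lambda w}}$ the exponential $e^{2\sqrt{\lambda w}}$ cancels. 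The remaining polynomial factor is handled by the same estimate \eqref{omega} used before, converting powers of $\lambda/w$ or $w/\lambda$ into controlled factors of $e^{(\sqrt\lambda-\sqrt w)^2/2}$.

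Finally, undoing the rescaling $w=y/t$ yields either $C_d(yt)^{-1/2}e^{-(\sqrt x-\sqrt y)^2/(2t)}$ when $d\ge \tfrac12$ (or when $d=0$ and $y\neq 0$, since the $\delta(y)$-piece is annihilated away from $y=0$ and the remaining regular piece has the same form as the $d=2$ case, handled as in Lemma \ref{estimate_p}), or the two-term maximum in the case $d<\tfrac12$, where the first term in the max dominates whenever $\lambda w\le 1$ and the second whenever $\lambda w\ge 1$. The main technical obstacle I expect is bookkeeping: verifying that when $0<d<\tfrac12$ the \emph{extra} powers of $w^\ell$ produced by the iterated vector field $w\partial_w$ are absorbed by $e^{(\sqrt\lambda-\sqrt w)^2/2}$ uniformly in $d$, exactly as in the argument \eqref{log} around the critical value $w=\min(1/\sqrt k,2d/(\sqrt k-1)^2)$. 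Once this uniform bound is established for each of the finitely many $(j,\ell)$ appearing in the induction, summing with the bounded coefficients $c_{j,\ell}(k,d)$ yields the stated inequality with a constant $C_d$ depending only on $k$ and uniform for $d\in[0,B]$.
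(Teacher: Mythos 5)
Your setup coincides with the paper's: the rescaling $\lambda=x/t$, $w=y/t$ so that $y\partial_y=w\partial_w$, an induction producing a finite combination of blocks $w^{\ell}(\lambda w)^{j}\psi_d^{(j)}(\lambda w)$ against the prefactor $w^{d-1}e^{-(\lambda+w)}/t$, the case split $\lambda w\le 1$ versus $\lambda w\ge 1$, and the large-argument asymptotics of $\psi_d$. The gap is in the regime $\lambda w\ge 1$, where you propose to ``estimate each summand'' separately and absorb the leftover algebraic factor via \eqref{omega}. That leftover is not a pure power of $\lambda/w$: multiplying the $(j,\ell)$ summand by $\sqrt{yt}=t\sqrt{w}$ and inserting the asymptotics gives
\begin{equation*}
w^{d-\frac12+\ell}(\lambda w)^{j}\psi_d^{(j)}(\lambda w)\,e^{-(\lambda+w)}\sim \frac{1}{\sqrt{4\pi}}\Bigl(\frac{\lambda}{w}\Bigr)^{\frac{j}{2}+\frac14-\frac{d}{2}}w^{\,j+\ell}\,e^{-(\sqrt{\lambda}-\sqrt{w})^2},
\end{equation*}
and the factor $w^{j+\ell}$ with $j+\ell\ge1$ blows up on the diagonal $\lambda=w\to\infty$ (which lies in $\lambda w\ge1$), exactly where $e^{(\sqrt{\lambda}-\sqrt{w})^2/2}=1$ offers no help; \eqref{omega} and \eqref{frac_estimate} only absorb powers of the \emph{ratio} $\lambda/w$. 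Already for $k=1$ the two summands $-w\psi_d(\lambda w)$ and $(\lambda w)\psi_d'(\lambda w)$ each exceed the claimed bound by a factor $\asymp w$ there; only their combination obeys it, because the leading asymptotics cancel:
\begin{equation*}
\lambda\psi_d'(\lambda w)-\psi_d(\lambda w)\sim \frac{(\lambda w)^{\frac14-\frac d2}e^{2\sqrt{\lambda w}}}{\sqrt{4\pi}}\Bigl(\sqrt{\lambda/w}-1\Bigr),\qquad \sqrt{\lambda/w}-1=\frac{\sqrt{\lambda}-\sqrt{w}}{\sqrt{w}},
\end{equation*}
which supplies the missing negative power of $w$ together with a factor $(\sqrt{\lambda}-\sqrt{w})$ that the Gaussian absorbs. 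Term-by-term estimation therefore cannot prove the lemma.

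This cancellation is precisely what the paper's proof is organized around: it isolates the top-order part of $(w\partial_w)^kp_d$ as the alternating binomial sum $\sum_{i=0}^{k}(-1)^{k-i}\binom{k}{i}\lambda^i\psi_d^{(i)}(\lambda w)$, relegates everything else to constant multiples of the lower derivatives $(w\partial_w)^jp_d$, $j<k$ (handled by the induction hypothesis), and estimates the sum as a whole, its leading asymptotic being $(\lambda w)^{\frac14-\frac d2}\bigl(\sqrt{\lambda/w}-1\bigr)^k$ with the $k$-fold cancellation built in. To repair your argument you must adopt this grouping (or otherwise exhibit the cancellation of the leading Bessel asymptotics across summands) before estimating; your treatment of the region $\lambda w\le1$ and of the cases $d=0$ and $d<\tfrac12$ is consistent with the paper and is not where the difficulty lies. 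Be warned also that the power-of-$w$ bookkeeping here is genuinely delicate — it is the crux of the lemma, not a routine verification.
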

\begin{proof}
Let $\lambda=\frac{x}{t},w=\frac{y}{t}$, then  
$(w\frac{\partial}{\partial w})^kp=(y\frac{\partial}{\partial y})^kp$,. We first consider  the case when $d\geq\frac{1}{2}$.
\begin{gather*}
    p^d_t(\lambda,w)=\frac{1}{t}w^{d-1}e^{-(\lambda+w)}\psi_{d}(\lambda w)=:I_0\\
    w\partial_wp^d_t(\lambda,w)=\frac{1}{t}w^de^{-(\lambda+w)}\left[\lambda\psi'_d(\lambda w)-\psi_{d}(\lambda w)\right]+(d-1)p^d_t(\lambda,w)=:I_{11}+(d-1)I_0\\
    (w\partial_w)^2p^d_t(\lambda,w)=\frac{1}{t}w^de^{-(\lambda+w)}\left[\lambda^2\psi''_d(\lambda w)-2\lambda\psi'_d(\lambda w)+\psi_{d}(\lambda w))\right]+(2d-1)I_1-d(d-1)I_0\\
    =:I_{21}+(2d-1)I_1-d(d-1)I_0
\end{gather*}
By induction, for $k\in\mathbb{N}$,
\begin{gather}
    (w\partial_w)^kp_t^d(x,y)=\frac{1}{t}w^de^{-(\lambda+w)}\left[\sum_{i=0}^k(-1)^{k-i}+
    \left(\begin{array}{c}
k\\
i
\end{array}\right)\lambda^i\psi_d^{(i)}(\lambda w)\right]+\sum_{j=0}^{k-1}c_j(w\partial_w)^{j}p^d_t(x,y)
\end{gather}
so we are left to show that
\begin{gather*}
    I:=w^{d-\frac{1}{2}}e^{-(\lambda+w)}\left[\sum_{i=0}^k(-1)^{k-i}\left(\begin{array}{c}
    k\\
    i
    \end{array}\right)\lambda^i\psi_d^{(i)}(\lambda w)\right]\leq Ce^{-\frac{(\sqrt{\lambda}-\sqrt{w})^2}{2}}
\end{gather*}
Use that
\begin{gather}
 \frac{d^k}{d^k_z}\psi_d=(\sqrt{z})^{1-d-k}I_{d-1+k}(2\sqrt{z})=\psi_{d+k}(z),\\
 \psi_d(z)\sim \frac{z^{\frac{1}{4}-\frac{d}{2}}e^{2\sqrt{z}}}{\sqrt{4\pi}}, z\to\infty
\end{gather}
\begin{enumerate}
    \item If $\lambda w\leq 1$, then $I\leq Cw^{d-\frac{1}{2}}\left(1+\lambda+...+\lambda^k\right)e^{- (\lambda+w)}=O(e^{-\frac{(\sqrt{\lambda}-\sqrt{w})^2}{2}})$ since $d\geq\frac{1}{2}$;\\
    \item If $\lambda w\geq 1$,
    \begin{gather*}
        I\sim\frac{1}{\sqrt{4\pi}} w^{d-\frac{1}{2}}e^{-(\sqrt{\lambda}-\sqrt{w})^2}\left[\sum_{i=0}^k(-1)^{k-i}\left(\begin{array}{c}
    k\\
    i
    \end{array}\right)\lambda^i(\lambda w)^{\frac{1}{4}-\frac{d+i}{2}}\right]\\
    \sim\frac{1}{\sqrt{4\pi}}\left(\frac{\lambda}{w}\right)^{\frac{1}{4}-\frac{d}{2}+\frac{k}{4}}\frac{1}{(\lambda w)^{\frac{k}{4}}}e^{-(\sqrt{\lambda}-\sqrt{w})^2}
    \end{gather*}
\end{enumerate}
so $I\leq\frac{1}{\sqrt{4\pi}}\left(\frac{\lambda}{w}\right)^{\frac{1}{4}-\frac{d}{2}+\frac{k}{4}}e^{-(\sqrt{\lambda}-\sqrt{w})^2}=O(e^{-\frac{(\sqrt{\lambda}-\sqrt{w})^2}{2}})$ by (\ref{frac_estimate}).

The case when $0\leq d\leq\frac{1}{2}$ can be proved similarly.
\end{proof}

\subsection{Heat kernel estimates}\label{heat kernel estimate}
Next we give kernel estimates for the heat kernel $k^{e}_{t}(x,x_{1})=\frac{1}{\sqrt{4\pi t}}e^{-\frac{(x-x_{1})^{2}}{4t}}$.

\begin{lemma}\label{gaussian1}
 For $c>0,\ 0<s<t$, there exists a constant $C>0$ such that
\begin{gather}
     \int_{-\infty}^{\infty}|\partial_{y}k^e_{t-s}(y,y_1)-\partial_{y}k^e_{t-s}(y',y_1)|\cdot| y-y_{1}|^{\gamma}\cdot e^{-\frac{(y_1-y_2)^2}{4cs}}dy_{1}\\
     \leq\frac{C}{\sqrt{t-s}}|y-y'|^{\gamma}\left(e^{-\frac{(y-y_2)^{2}}{4ct}}+e^{-\frac{(y'-y_2)^{2}}{4ct}}\right).
\end{gather}
\end{lemma}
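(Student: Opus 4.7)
The strategy is a standard case-split on $|y-y'|$ versus $\sqrt{t-s}$, combined with two basic tools: pointwise Gaussian bounds on the first and second $y$-derivatives of $k^e_{t-s}$, and the explicit Gaussian convolution identity
\[
\int_{\mathbb{R}} e^{-a(y-y_1)^2}\,e^{-b(y_1-y_2)^2}\,dy_1=\sqrt{\tfrac{\pi}{a+b}}\,e^{-\frac{ab}{a+b}(y-y_2)^2}.
\]
I will use the inequalities $|\partial_y k^e_{t-s}(y,y_1)|\leq\tfrac{C}{t-s}e^{-(y-y_1)^2/(C_0(t-s))}$ and $|\partial^2_y k^e_{t-s}(y,y_1)|\leq\tfrac{C}{(t-s)^{3/2}}e^{-(y-y_1)^2/(C_0(t-s))}$, obtained by absorbing polynomial prefactors into the Gaussian, together with the absorption trick $|x|^\gamma e^{-x^2/A}\leq C_\gamma A^{\gamma/2}e^{-x^2/(2A)}$ for the $|y-y_1|^\gamma$ factor. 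With $a=1/(C_0(t-s))$ and $b=1/(4cs)$ chosen so that $C_0=4c$ (or smaller), the convolution identity gives exponent $-(y-y_2)^2/(4ct)$ and prefactor $\sqrt{4\pi cs(t-s)/t}\leq C\sqrt{t-s}$, matching the right-hand side of the lemma.

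\textbf{Case 1: $|y-y'|\geq\sqrt{t-s}$.} Here I apply the triangle inequality
$|\partial_y k^e_{t-s}(y,y_1)-\partial_y k^e_{t-s}(y',y_1)|\leq |\partial_y k^e_{t-s}(y,y_1)|+|\partial_y k^e_{t-s}(y',y_1)|$. For the $y$-centered piece I apply the kernel bound, absorb $|y-y_1|^\gamma$ into the Gaussian gaining a $(t-s)^{\gamma/2}$, then convolve against $e^{-(y_1-y_2)^2/(4cs)}$ to land at $C(t-s)^{(\gamma-1)/2}e^{-(y-y_2)^2/(4ct)}$; the case hypothesis $(t-s)^{\gamma/2}\leq|y-y'|^\gamma$ converts this into the desired $C|y-y'|^\gamma(t-s)^{-1/2}e^{-(y-y_2)^2/(4ct)}$. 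For the $y'$-centered piece the factor $|y-y_1|^\gamma$ does not align with the kernel's center, so I split $|y-y_1|^\gamma\leq|y-y'|^\gamma+|y'-y_1|^\gamma$: the first summand produces the $|y-y'|^\gamma$ factor directly, and the second is handled by the same absorption-and-convolution yielding $e^{-(y'-y_2)^2/(4ct)}$.

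\textbf{Case 2: $|y-y'|<\sqrt{t-s}$.} I apply the mean value theorem to write
\[
|\partial_y k^e_{t-s}(y,y_1)-\partial_y k^e_{t-s}(y',y_1)|\leq|y-y'|\cdot|\partial_y^2 k^e_{t-s}(\bar y,y_1)|
\]
for some $\bar y$ between $y$ and $y'$. Since $|\bar y-y|\leq\sqrt{t-s}$, I can replace the Gaussian centered at $\bar y$ by one centered at $y$ at the cost of a fixed multiplicative constant. Absorbing $|y-y_1|^\gamma$ and applying the convolution identity gives
\[
\frac{C|y-y'|(t-s)^{\gamma/2}}{(t-s)^{3/2}}\cdot\sqrt{t-s}\cdot e^{-(y-y_2)^2/(4ct)} =\frac{C|y-y'|(t-s)^{\gamma/2}}{t-s}\,e^{-(y-y_2)^2/(4ct)},
\]
and the rewriting $|y-y'|=|y-y'|^\gamma|y-y'|^{1-\gamma}\leq|y-y'|^\gamma(t-s)^{(1-\gamma)/2}$ collapses the $(t-s)$ powers to $(t-s)^{-1/2}$.

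\textbf{Main obstacle.} The one genuine subtlety is bookkeeping with constants inside the Gaussian exponents: the kernel estimate, the Hölder absorption of $|y-y_1|^\gamma$, and the convolution step each involve a constant in the exponent, and these must chain so that the final exponent is exactly $-(y-y_2)^2/(4ct)$ as stated. This forces the intermediate Gaussian constants to be $\leq 4c$, which is possible by starting with a sufficiently small $C_0$ (absorbing correspondingly less from the polynomial prefactor) at the price of a $c$-dependent multiplicative constant $C$. Once this tuning is fixed, the remaining estimates are mechanical.
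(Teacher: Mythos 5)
The paper's own ``proof'' of this lemma is the single sentence ``These are all corollaries of \cite[Lemma 9.2.12]{EM},'' so any self-contained argument, including yours, is by definition a different route. The architecture you chose --- the exact Gaussian convolution identity, a dichotomy on $|y-y'|$ versus $\sqrt{t-s}$, the triangle inequality plus a split of $|y-y_1|^\gamma$ in the large-gap case, and the mean value theorem plus $|y-y'|\le |y-y'|^\gamma (t-s)^{(1-\gamma)/2}$ in the small-gap case --- is the standard self-contained approach and the right skeleton.

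There is, however, a genuine error in the ``main obstacle'' paragraph, and it is not merely a constant to tune. You claim that the intermediate Gaussian constants can be made $\le 4c$ ``by starting with a sufficiently small $C_0$ (absorbing correspondingly less from the polynomial prefactor).'' This is backwards. The heat kernel $k^e_{t-s}$ already carries the Gaussian $e^{-(y-y_1)^2/(4(t-s))}$, so the floor is $C_0=4$, and absorbing \emph{any} positive power of $|y-y_1|/\sqrt{t-s}$ into the exponential strictly increases $C_0$ above $4$; no amount of ``absorbing less'' pushes $C_0$ below $4$. Chasing the exponent through the convolution identity with no absorption at all gives
\[
\int_{\mathbb{R}} e^{-\frac{(y-y_1)^2}{4(t-s)}}e^{-\frac{(y_1-y_2)^2}{4cs}}\,dy_1 \;\propto\; e^{-\frac{(y-y_2)^2}{4\left((t-s)+cs\right)}},
\]
and $(t-s)+cs\le ct$ if and only if $c\ge 1$; once you absorb the prefactor into $e^{-(y-y_1)^2/(C_0(t-s))}$ with $C_0>4$ the requirement becomes $C_0(t-s)+4cs\le 4ct$, i.e.\ $C_0\le 4c$, which forces $c>1$. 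Thus your chain cannot actually land on the advertised exponent $-(y-y_2)^2/(4ct)$ for $c\le 1$; it produces $e^{-(y-y_2)^2/(Act)}$ with $A>4$. The re-centering step in Case 2 has the same flaw: translating a Gaussian's center from $\bar y$ to $y$ with $|\bar y - y|\le\sqrt{t-s}$ does \emph{not} cost only a multiplicative constant, because $(\bar y-y_1)^2 \ge \tfrac12(y-y_1)^2 - 2(\bar y-y)^2$, so it costs a further degradation of the Gaussian rate. To close the argument you should either accept a weakened exponent in the conclusion (compare Lemma~\ref{gaussian3} in the paper, whose right-hand side carries $e^{-(y-y_2)^2/(8t)}$ precisely because of such losses), or work with $k^e_{c(t-s)}$ so that the convolution closes at $4c(t-s)+4cs=4ct$ exactly and the polynomial prefactors are handled without touching the Gaussian constant.
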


For $x_1<x_2$, denote $\alpha_e=\frac{3x_1-x_2}{2},\ \beta_e=\frac{3x_2-x_1}{2}$, let $J=[\alpha_e, \beta_e]$.
\begin{lemma}\label{gaussian2}
For $0<\gamma<1$, there is a $C$ such that
\begin{align}
    \int_0^t\int_J|\partial_x^2k_{t-s}^e(y,y_1)||y-y_1|^\gamma e^{-\frac{(y_1-y_2)^2}{4s}} dy_1ds\leq C|y-y'|^\gamma e^{-\frac{(y-y_2)^2}{4t}},\\
    \int_0^t\int_J|\partial_x^2k_{t-s}^e(y',y_1)||y'-y_1|^\gamma e^{-\frac{(y_1-y_2)^2}{4s}} dyds\leq C|y-y'|^\gamma e^{-\frac{(y'-y_2)^2}{4t}}.
\end{align}
\end{lemma}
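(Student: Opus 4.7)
The plan is to establish the first inequality; the second follows by the identical argument with the roles of $y$ and $y'$ exchanged. The strategy has two steps: first, extract the Gaussian decay in $y-y_2$ via a semigroup-type inequality so that $e^{-c(y-y_2)^2/t}$ factors out of the integral; second, estimate the remaining integral $\int_0^t\int_J |y-y_1|^\gamma(t-s)^{-3/2} e^{-(y-y_1)^2/(c(t-s))}\,dy_1\,ds$ by a case split based on whether $\sqrt{t-s}$ is larger or smaller than $|y-y'|$.

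For the first step I would start from the pointwise estimate $|\partial_y^2 k_{t-s}^e(y,y_1)| \leq C(t-s)^{-3/2} e^{-(y-y_1)^2/(8(t-s))}$, obtained by absorbing the polynomial factor $|(y-y_1)^2/(4(t-s)^2)-1/(2(t-s))|$ into a slightly widened Gaussian. Splitting the exponential in half and applying the elementary bound $A\alpha^2+B\beta^2 \geq \frac{AB}{A+B}(\alpha+\beta)^2$ with $\alpha=y-y_1$, $\beta=y_1-y_2$, $A=1/(16(t-s))$, $B=1/(4s)$, yields
\begin{equation*}
\frac{(y-y_1)^2}{16(t-s)} + \frac{(y_1-y_2)^2}{4s} \geq \frac{(y-y_2)^2}{16t-12s} \geq \frac{(y-y_2)^2}{16t},
\end{equation*}
and hence
\begin{equation*}
|\partial_y^2 k_{t-s}^e(y,y_1)|\, e^{-(y_1-y_2)^2/(4s)} \leq \frac{C}{(t-s)^{3/2}} e^{-(y-y_1)^2/(16(t-s))} \cdot e^{-(y-y_2)^2/(16t)}.
\end{equation*}
The target exponential $e^{-(y-y_2)^2/(4t)}$ should be read as $e^{-c(y-y_2)^2/t}$ for a fixed $c>0$; any $c<1/4$ can be reached by taking the Gaussian absorption constant $8$ closer to $4$ at the cost of enlarging $C$.

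For the second step, set $a=|y-y'|$; the interval $J$ has length $2a$ and satisfies $|y-y_1|\leq 3a/2$ for every $y_1\in J$. Under the substitution $u=(y-y_1)/\sqrt{t-s}$ the expression $|y-y_1|^\gamma(t-s)^{-3/2} e^{-(y-y_1)^2/(16(t-s))}\,dy_1$ becomes $(t-s)^{(\gamma-2)/2}|u|^\gamma e^{-u^2/16}\,du$ with the constraint $|u|\leq 3a/(2\sqrt{t-s})$. I would split the $s$-integral at $s^\ast=\max(0,t-a^2)$. On $[s^\ast,t]$, where $\sqrt{t-s}\leq a$, the $u$-integral is controlled by $\int_{\mathbb{R}}|u|^\gamma e^{-u^2/16}\,du<\infty$ and the $s$-integral gives $\int_{s^\ast}^t (t-s)^{(\gamma-2)/2}\,ds = \frac{2}{\gamma} a^\gamma$. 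On $[0,s^\ast]$, where $\sqrt{t-s}\geq a$, the tight $u$-range forces $\int_{|u|\leq 3a/(2\sqrt{t-s})}|u|^\gamma e^{-u^2/16}\,du \leq C(a/\sqrt{t-s})^{1+\gamma}$, so the $s$-integrand becomes $Ca^{1+\gamma}(t-s)^{-3/2}$, and $\int_0^{s^\ast}(t-s)^{-3/2}\,ds\leq 2/a$ contributes another $O(a^\gamma)$.

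The main obstacle is that the pointwise $(t-s)^{-3/2}$ singularity arising from two derivatives on the Gaussian kernel is not integrable near $s=t$, so no straightforward pointwise bound suffices. It is precisely the confinement of $y_1$ to the short interval $J$, combined with the $|y-y_1|^\gamma$ weight, that tames this singularity: in the small-$\sqrt{t-s}$ regime the constraint on $u$ is vacuous but $(t-s)^{(\gamma-2)/2}$ is integrable there, while in the large-$\sqrt{t-s}$ regime the narrowness of $J$ provides the extra power of $a$ that converts the non-integrable $(t-s)^{-3/2}$ into a gain of $a^\gamma$. The second inequality follows verbatim with $y$ replaced by $y'$ throughout, using the symmetry of $J$ in $y,y'$ and of the defining hypothesis.
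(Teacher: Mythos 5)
Your argument is correct, and it is worth noting that the paper itself gives no proof here: Lemmas \ref{gaussian1}--\ref{gaussian3} are dispatched in one line as corollaries of \cite[Lemma 9.2.12]{EM}. What you have written is essentially the standard argument behind that citation, spelled out: (i) absorb the polynomial prefactor of $\partial_y^2k^e_{t-s}$ into a slightly widened Gaussian and use $A\alpha^2+B\beta^2\ge\frac{AB}{A+B}(\alpha+\beta)^2$ to peel off the factor $e^{-c(y-y_2)^2/t}$; (ii) split the $s$-integral at $t-|y-y'|^2$, where in the regime $\sqrt{t-s}\ge|y-y'|$ the confinement of $y_1$ to $J$ supplies the factor $(|y-y'|/\sqrt{t-s})^{1+\gamma}$ that tames the non-integrable $(t-s)^{-3/2}$ singularity. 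Both steps check out (including $|y-y_1|\le\frac32|y-y'|$ on $J$ and its symmetric analogue for $y'$, which follow from $\alpha_e=\frac{3x_1-x_2}{2}$, $\beta_e=\frac{3x_2-x_1}{2}$), and your honest remark that the method yields $e^{-c(y-y_2)^2/t}$ only for $c<1/4$ rather than the literal $c=1/4$ is the right reading: any fraction of the Gaussian in $(y-y_1)^2/(t-s)$ reserved to control the singularity at $s=t$ is lost from the convolution, the paper's neighbouring lemmas already carry degraded exponents ($8t$ in Lemma \ref{gaussian3}, $4ct$ in Lemma \ref{gaussian1}), and the sole application, Proposition \ref{iteration}, halves the exponent at each of finitely many iterations anyway. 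One small precision: as written, splitting the absorbed Gaussian in half gives only $c=1/16$; to approach $c=1/4$ you must let the splitting ratio tend to $0$ as well as push the absorption constant toward $4$, not merely the latter.
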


\begin{lemma}\label{gaussian3}
For $c>0,\ 0<s<t$, there exists a constant $C>0$ such that
\begin{gather*}
    \int_{0}^{t}\int_{J^c}|\partial_{y}^{2}k_{t-s}^{e}(y,y_{1})-\partial_{y'}^{2}k_{t-s}^{e}(y',y_{1})|\cdot|y-y_{1}|^{\gamma}\cdot e^{-\frac{(y_{1}-y_2)^2}{4s}}dy_{1}ds,\\
    \leq C| y-y'|^{\gamma}\left(e^{-\frac{(y-y_2)^{2}}{8t}}+e^{-\frac{(y'-y_2)^{2}}{8t}}\right).
\end{gather*}
\end{lemma}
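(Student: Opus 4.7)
The plan is to reduce the estimate to a standard Gaussian convolution after controlling the difference of second derivatives by a dichotomy based on the relative size of $|y-y'|$ and $\sqrt{t-s}$.

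First I would record the pointwise derivative bounds
\[
|\partial_y^{k} k^e_{t-s}(y,y_1)| \leq \frac{C_k}{(t-s)^{(k+1)/2}} e^{-\frac{(y-y_1)^2}{8(t-s)}},\qquad k=2,3,
\]
by absorbing the polynomial prefactors that come from differentiating the exponential into a slight weakening of the Gaussian via $|z|^k e^{-z^2/(4\tau)} \leq C_k \tau^{k/2} e^{-z^2/(8\tau)}$. Next, I would split into two regimes. When $|y-y'|\geq\sqrt{t-s}$, the triangle inequality together with the $k=2$ bound gives
\[
|\partial_y^2 k^e_{t-s}(y,y_1) - \partial_{y'}^2 k^e_{t-s}(y',y_1)|\leq \frac{C|y-y'|^{\gamma}}{(t-s)^{1+\gamma/2}}\Bigl(e^{-\frac{(y-y_1)^2}{8(t-s)}}+e^{-\frac{(y'-y_1)^2}{8(t-s)}}\Bigr).
\]
When $|y-y'|\leq \sqrt{t-s}$, the mean value theorem together with the $k=3$ bound and the trivial inequality $|y-y'|^{1-\gamma}\leq (t-s)^{(1-\gamma)/2}$ yields the same estimate. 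In the MVT step one uses that for $\xi$ on the segment $[y,y']$, either $\xi-y_1$ keeps a definite sign (so $|\xi-y_1|$ lies between $|y-y_1|$ and $|y'-y_1|$) or the segment spans $y_1$, in which case $|y-y'|\geq \max(|y-y_1|,|y'-y_1|)$ gives direct control.

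With this unified bound in hand, I would use that on $J^c$ the weights $|y-y_1|$ and $|y'-y_1|$ are comparable (by construction of $J$ as the interval enlarged by $|y-y'|$ around the two points), so that $|y-y_1|^{\gamma}$ may be replaced by either without loss. After that, one absorbs the polynomial weight into the Gaussian via
\[
|y-y_1|^{\gamma} e^{-\frac{(y-y_1)^2}{8(t-s)}} \leq C(t-s)^{\gamma/2} e^{-\frac{(y-y_1)^2}{16(t-s)}},
\]
and likewise for the $y'$ term. Extending the $y_1$-integral to all of $\mathbb R$ and completing the square gives the standard Gaussian convolution
\[
\int_{-\infty}^{\infty} e^{-\frac{(y-y_1)^2}{16(t-s)}}\, e^{-\frac{(y_1-y_2)^2}{4s}}\, dy_1 \leq C\sqrt{\tfrac{s(t-s)}{t}}\, e^{-\frac{(y-y_2)^2}{Ct}},
\]
and similarly with $y$ replaced by $y'$. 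The constant in the output exponential can be taken smaller than $1/8$ after the usual adjustment.

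The final $s$-integral reduces to
\[
\int_0^t \frac{1}{t-s}\sqrt{\tfrac{s(t-s)}{t}}\, ds = \frac{1}{\sqrt{t}}\int_0^t \sqrt{\tfrac{s}{t-s}}\, ds = \frac{\pi}{2}\sqrt{t},
\]
which is bounded on any finite time interval and so can be absorbed into $C$. Combining everything gives
\[
\int_0^t\int_{J^c}|\partial_y^2 k^e_{t-s}(y,y_1) - \partial_{y'}^2 k^e_{t-s}(y',y_1)|\,|y-y_1|^{\gamma}\, e^{-\frac{(y_1-y_2)^2}{4s}}\, dy_1\, ds \leq C|y-y'|^{\gamma}\Bigl(e^{-\frac{(y-y_2)^2}{8t}}+e^{-\frac{(y'-y_2)^2}{8t}}\Bigr),
\]
as desired. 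The main obstacle is the bookkeeping in Step 2: justifying the MVT bound on $J^c$ requires a careful argument that $\sup_{\xi\in[y,y']}e^{-(\xi-y_1)^2/(c(t-s))}$ is controlled by the sum of the endpoint Gaussians even when the interval crosses $y_1$, and managing the shrinking constants inside the exponentials across the several Young-type absorption steps so that the final exponent $1/(8t)$ comes out correctly.
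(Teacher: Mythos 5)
The paper does not prove this lemma directly; it simply cites \cite[Lemma 9.2.12]{EM}. Your attempt at a self-contained proof follows the right general template (dichotomy in $|y-y'|$ versus $\sqrt{t-s}$, then Gaussian convolution), but it contains a load-bearing error. By your own Step 1, $\|\partial_y^2 k^e_{t-s}\|_\infty \lesssim (t-s)^{-3/2}$, so in the regime $|y-y'|\geq\sqrt{t-s}$ the triangle inequality gives a bound of size $|y-y'|^\gamma (t-s)^{-3/2-\gamma/2}$, and the MVT regime gives the same; your claimed unified bound $(t-s)^{-1-\gamma/2}$ is off by a factor $(t-s)^{-1/2}$. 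This is not a cosmetic slip: with the correct exponent, after absorbing $|y-y_1|^\gamma$ into $(t-s)^{\gamma/2}$ and convolving, the $s$-integrand becomes $(t-s)^{-3/2}\sqrt{s(t-s)/t}=(t-s)^{-1}\sqrt{s/t}$, which diverges logarithmically at $s=t$. Your clean evaluation $\int_0^t\frac{1}{t-s}\sqrt{s(t-s)/t}\,ds=\frac{\pi}{2}\sqrt t$ only works because of the missing half power. The deeper point is that the unified envelope bound is genuinely too lossy to integrate over all of $[0,t]$: it is sharp only at the transition $|y-y'|\sim\sqrt{t-s}$ and overestimates each regime away from it by exactly the factor that produces the logarithm.

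The repair is to keep the two regimes separate in the $s$-integration rather than merging them into one bound. On $t-s\geq|y-y'|^2$, keep the first power $|y-y'|$ from the MVT (do not convert it to $|y-y'|^\gamma(t-s)^{(1-\gamma)/2}$), note that the $y_1$-integral of $|y-y_1|^\gamma$ against the kernel Gaussian contributes $(t-s)^{(1+\gamma)/2}$, and integrate $(t-s)^{-2+(1+\gamma)/2}=(t-s)^{-3/2+\gamma/2}$ up to $s=t-|y-y'|^2$; this converges at the upper endpoint and yields $|y-y'|^{\gamma-1}$, hence $|y-y'|^\gamma$ overall. On $t-s\leq|y-y'|^2$, bound the two terms separately (here $J^c$ is needed to replace $|y-y_1|$ by $|y'-y_1|$ up to a factor of $3$) and integrate $(t-s)^{-1+\gamma/2}$ over $t-s\in[0,|y-y'|^2]$, again yielding $|y-y'|^\gamma$. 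Finally, to land on the stated exponent $1/(8t)$ rather than $1/(16t)$, you must perform only one weakening of the kernel Gaussian (split $e^{-z^2/4\tau}$ once into two copies of $e^{-z^2/8\tau}$, absorbing all polynomial prefactors into one copy and convolving the other with $e^{-(y_1-y_2)^2/4s}$, whose semigroup-type combination gives exactly $e^{-(y-y_2)^2/(8t-4s)}\leq e^{-(y-y_2)^2/(8t)}$); you correctly flag this bookkeeping issue, but the divergence of the $s$-integral is the gap that must be closed first.
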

\begin{proof}
These are all corollaries of \cite[Lemma 9.2.12]{EM}.
\end{proof}

\section*{Acknowledgment.} This research was partially supported by the U.S. National Science Foundation, Grants DMS-1908736 and EFMA-1641100. 



\bibliographystyle{plain}
\bibliography{ref}


\end{document}